\providecommand{\U}[1]{\protect\rule{.1in}{.1in}}
\newtheorem{theorem}{Theorem}[section]
\newtheorem{proposition}[theorem]{Proposition}
\newtheorem{lemma}[theorem]{Lemma}
\newtheorem{corollary}[theorem]{Corollary}
\newtheorem{remark}{Remark}
\newtheorem*{problem1}{Problem $\mathcal{A}$}
\newtheorem*{problem2}{Problem $\mathcal{B}$}
\newtheorem*{question1}{Question $\mathcal{A}$}
\newtheorem*{question3}{Question $\mathcal{C}$}
\newtheorem*{question5}{Question $\mathcal{E}$}
\newtheorem{claim}{Claim}
\newtheorem*{Claim*}{Claim}
\theoremstyle{remark}
\newtheorem{definition}[theorem]{Definition}
\newcommand{\remove}[1]{ }
\newcommand{\R}{\mathbb{R}}
\newcommand{\C}{\mathbb{C}}
\newcommand{\N}{\mathbb{N}}
\newcommand{\LL}{\mathcal{L}}
\newcommand{\FF}{\mathcal{F}}
\newcommand{\NN}{\mathcal{N}}
\numberwithin{equation}{section}
\begin{document}
\nocite{*}
\title[Higher order Boussinesq system and its properties]{On the well posedness and large-time behavior of higher order Boussinesq system}
\author[Capistrano--Filho]{Roberto A. Capistrano--Filho}
\address{Departamento de Matemática, Universidade Federal de Pernambuco (UFPE), 50740-545, Recife (PE), Brazil.}
\email{capistranofilho@dmat.ufpe.br, roberto.capistranofilho@ufpe.br}
\author[Gallego]{Fernando A. Gallego}
\address{Departamento de Matem\'aticas, Universidad Nacional de Colombia, UNAL, Cra 27 No. 64-60, 170003, Manizales, Colombia}
\email{fagallegor@unal.edu.co, ferangares@gmail.com}
\author[Pazoto]{Ademir F. Pazoto}
\address{Institute of Mathematics, Federal University of Rio de Janeiro (UFRJ)  P.O. Box 68530, CEP 21945-970, Rio de Janeiro (RJ) Brazil.}
\email{ademir@im.ufrj.br}
\subjclass[2010]{Primary: 93B05, 93D15, 35Q53}
\keywords{Boussinesq system of higher order, Stabilization, M\"obius transform, Critical length, Fifth order KdV--type system}
\date{2018-08-27-a}

\begin{abstract}
A family of Boussinesq systems has been proposed  to describe the bi-directional propagation of small amplitude long waves on the surface of shallow water. In this paper, we investigate the well-posedness and boundary stabilization of the generalized higher order Boussinesq systems of Korteweg-de Vries--type posed on a interval. We design a two-parameter family of feedback laws for which the system is locally well-posed and the solutions of the linearized system are exponentially decreasing in time.
\end{abstract}

\maketitle

\section{Introduction}

\subsection{Presentation of the problem} J. L. Boussinesq introduced in \cite{boussinesq1} several simple nonlinear systems of PDEs,  including the Korteweg-de Vries (KdV) equation, to explain certain physical observations concerning the water waves, e.g. the emergence and stability of solitons. Unfortunately, several systems derived by Boussinesq proved to be ill-posed, so that there was a need to propose other systems similar to Boussinesq's ones but with better mathematical properties. In this spirit, an evolutionary version of the  Boussinesq systems was proposed in \cite[Eqs. (4.7)-(4.8), page 283]{olver1984}:
\begin{equation}\label{fifthb}
\begin{cases}
\eta_t + u_x +\frac16\beta(3\theta^2-1)u_{xxx}+\frac{1}{120}\beta^2(25\theta^4-10\theta^2+1)u_{xxxxx}
\vspace{0.1cm}\\
\hspace{6cm}+\alpha(\eta u)_x+ \frac12 \alpha\beta(\theta^2-1)(\eta u_{xx})_x =0
\vspace{0.2cm}\\
u_t +\eta_x +\beta \left[ \frac12 (1-\theta^2) -\tau \right]\eta_{xxx}+\beta^2\left[\frac{1}{24}(\theta^4-6\theta^2+5)+\frac{\tau}{2}(\theta^2-1)\right] \eta_{xxxxx}
\vspace{0.1cm}\\
\hspace{5cm}\alpha u u_x +\alpha\beta \left[ (\eta\eta_{xx})_x+(2-\theta^2)u_xu_{xx}\right] =0,
\end{cases}
\end{equation}
where $\eta$ and $u$ are real function of the real variables $x$, $t$. The small parameters $\alpha>0$ and $\beta>0$ represent, respectively, the ratio of wave amplitude to undisturbed fluid depth, and the square of the ratio of fluid depth to wave  length, both are assumed to be of the same order of smallness. Finally, $\tau$ represents a dimensionless surface tension coefficient, with $\tau = 0$  corresponding to the case of no surface tension and the velocity potential at height $0\leq \theta \leq 1$. For further discussions on the model and different modelling possibilities, see, e.g. \cite{bona2002,bona2004,bona1976,daripa2003,olver1984,SauWanXu2017}.

The goal of this paper is to investigate two problems that appear on the mathematical theory when we consider the study of PDEs. The first one is the global well-posedness, in time, of system \eqref{fifthb}, which is so-called \textit{fifth order KdV--type system}. Another problem is concerned with boundary stabilization of the linearized system associated to (\ref{fifthb}).

First, we consider the following system, carefully derived by \eqref{fifthb} in a short Appendix at the end of this paper,
\begin{equation}
\begin{cases}\label{int1}
\eta_t + u_x-au_{xxx}+ a_1 (\eta u)_x +a_2(\eta u_{xx})_x + b u_{xxxxx}= 0,&  \text{in } (0,L)\times(0,\infty), \\
u_t +\eta_x -a\eta_{xxx} +a_1uu_x+a_3(\eta\eta_{xx})_{x}+  a_4u_xu_{xx}    +b\eta_{xxxxx}=0,&  \text{in } (0,L)\times(0,\infty), \\
\eta(x,0)= \eta_0(x), \quad u(x,0)=  u_0(x),&  \text{in } (0,L), \\
\end{cases}
\end{equation}
where $a>0$, $b>0$, $a\neq b$, $a_1>0$, $a_2<0$, $a_3>0$ and $a_4>0$, with the following boundary conditions
\begin{equation}
\begin{cases}\label{int2}
\eta(0,t)=\eta(L,t)=\eta_{x}(0,t)=\eta_x(L,t)=0, & \text{in} \,\,  (0,\infty), \\
u(0,t)=u(L,t)=u_{x}(0,t)=u_x(L,t)=0, & \text{in} \,\,  (0,\infty), \\
u_{xx}(0,t)+\alpha_1\eta_{xx}(0,t)=0, \,\,\,\, u_{xx}(L,t)-\alpha_2\eta_{xx}(L,t)=0, & \text{in} \,\,  (0,\infty),
\end{cases}
\end{equation}
for $\alpha_1,\alpha_2\in \mathbb{R}^{+}_{\ast}$.

The energy associated to the model is given by
\begin{equation}\label{energy}
E(t):=\frac{1}{2}\int_0^L(\eta^2(x,t)+u^2(x,t))dx,
\end{equation}
and, at least formally, we can verify that $E(t)$ satisfies
\begin{multline}\label{dis}
\frac{d}{dt}E(t)=-\alpha_1b|\eta_{xx}(0,t)|^2 -\alpha_2b|\eta_{xx}(L,t)|^2 -\frac{a_1}{2}\int_0^L\eta^2 u_x dx \\
-\frac{a_2}{2}\int_0^L \eta^2 u_{xxx} dx +a_3\int_0^L\eta\eta_{xx}u_xdx +\frac{a_4}{2}\int_0^Lu_x^3dx.
\end{multline}
Indeed, if we multiply the first equation of (\ref{int1}) by $\eta$, the second one by $u$ and integrate by parts over $(0, L)$, we obtain \eqref{dis}, by using the boundary conditions \eqref{int2}. This indicates that $E(t)$ does not have a definite sign, but the boundary conditions play the role of a feedback damping mechanism for the linearized system, namely,
\begin{equation}\label{b1.a}
\begin{cases}
\eta_t + u_x-au_{xxx}+ b u_{xxxxx}= 0, &  \text{in } (0,L)\times(0,\infty), \\
u_t +\eta_x -a\eta_{xxx} +b\eta_{xxxxx}=0,&  \text{in }   (0,L)\times(0,\infty), \\
\eta(x,0)= \eta_0(x), \quad u(x,0)=  u_0(x),&  \text{in }   (0,L),
\end{cases}
\end{equation}
with the boundary conditions given by \eqref{int2}.

Then, the following questions arise:
\begin{problem1}
Does $E(t)\to 0$ as $t\to\infty$?  If it is the case, can we find a decay rate of $E(t)$?
\end{problem1}
The problem might be easy to solve when the underlying model has a intrinsic dissipative nature. Moreover, in the context of coupled systems, in order to achieve the desired decay property, the damping mechanism has to be designed in an appropriate
way to capture all the components of the system.

Before presenting an answer for Problem $\mathcal{A}$, it is necessary to investigate the global well-posedeness of the full system \eqref{int1}-\eqref{int2}. Thus, the following issue appears naturally:
\begin{problem2}
Is the fifth order KdV--type system globally well-posed in time, with initial data in $H^s(0,L)$, for some $s\in \mathbb{R}^{+}$?
\end{problem2}

\subsection{Some previous results}  It is by now well know that mathematicians are interested in the well-posedness of dispersive equation which depends on smoothing effects associated to datum (initial value or boundary value).  The well-posedness of the initial value problem for single KdV equation and single fifth order KdV equation was deeply investigated. For an extensive reading on the subject see, for instance,  \cite{BonaSunZhang2003,CoSaut1988,glassguerrero,KenPonVeg1991,ZhaoZhang2014} and the reference therein.  In contrast, the well-posedness theory for the coupled system of KdV--type is considerably less advanced than the theory for single KdV--type equations \cite{bona2002,bona2004,SauXu2012,SauXu2012a,SauWanXu2017}. The same is true for the stabilization properties.

In additional, other interesting problem, as mentioned previously, in the stabilization problem. Problem $\mathcal{A}$ was first addressed in \cite{pazoto2008} for a Boussinesq system of KdV-KdV type
\begin{equation}
\left\{
\begin{array}
[c]{lll}%
\eta_{t}+u_{x}+ (\eta u)_x + u_{xxx}=0 &  & \text{in }(
0,L)  \times(   0,T)  \text{,}\\
u_{t}+\eta_{x} + uu_x + \eta_{xxx}=0 &  & \text{in }(   0,L)
\times(   0,T)  \text{,}\\
\eta(x,0)  =\eta_{0}(   x)  \text{, \ } u(x,0)  =u_{0}(   x)  &  & \text{in }(   0,L),
\end{array}
\right.  \label{int_34e}%
\end{equation}
with the boundary conditions%
\begin{equation}
\left\{
\begin{array}
[c]{lll}%
u(0,t)  =u_{xx}(0,t)  =0, &  & \text{in
}(   0,T)  \text{,}\\
u_{x}(0,t)  =\alpha_{0}\eta_{x}(0,t), \quad u_{x}(L,t)  =-\alpha_{1}\eta_{x}(L,t) &  & \text{in }( 0,T)  \text{,}\\
u(L,t)  =\alpha_{2}\eta(L,t),   \quad u_{xx}(L,t)  =-\alpha_{2}\eta_{xx}(L,t)  &  &
\text{in }(   0,T),
\end{array}
\right.  \label{int_32e}%
\end{equation}
where $\alpha_{0}\geq 0$, $\alpha_{1}>0$ and $\alpha _2>0$. Note that, with boundary conditions (\ref{int_32e}), we have the following identity $$\frac{d}{dt}E(t)=-\alpha_2|\eta(L,t)|^2-\alpha_1|\eta_x(L,t)|^2-\alpha_0|\eta_x(0,t)|^2-\frac{1}{3}u^3(L,t)-\int_0^L(\eta u)_x\eta dx,$$ which does not have a definite sign. In this case, first the authors studied the linearized system to derive some a priori estimates and the exponential decay in the $L^2$--norm. It is established the Kato smoothing effect by means of the multiplier method, while the exponential decay is obtained with the aid of some compactness arguments that reduce the issue to prove a unique continuation property for a spectral problem associated to the space operator (see,
for instance, \cite{bardos,Rosier}). The exponential decay estimate of the linear system is then combined with the contraction mapping theorem in a convenient weighted space to prove the global well-posedness together with the exponential stability property of the nonlinear system \eqref{int_34e}-\eqref{int_32e} with small data.

Recently, in \cite{CaPaRo}, the authors studied a similar  boundary stabilization problem for the system (\ref{int_34e}) with less amount of damping. More precisely, the following boundary conditions was considered
\begin{equation*}
\left\{
\begin{array}
[c]{lll}%
\eta(0,t)  =0, \ \eta (L,t)=0, \ \eta _x(0,t)=0\ , &  & \text{in } (0,T)  \text{,}\\
u (0,t)  =0 ,\  u(L,t) =0 ,\  u_{x}(L,t)  =-\alpha \eta _x (L,t)  &  & \text{in }( 0,T)  \text{,}\\
\end{array}
\right.
\end{equation*}
with $\alpha >0$. In this case, it follows that
\begin{equation*}
\frac{d}{dt}E(t)=- \alpha |\eta_x(L,t)|^2 -\frac{1}{3}u^3(L,t)-\int_0^L(\eta u)_x\eta dx.
\end{equation*}
Proceeding as in \cite{pazoto2008} the local exponential decay is also obtained for solution issuing from small data. However, due to the lack of dissipation, the unique continuation issue for the linearized system can not be obtained by standard methods. In order to overcome this difficult, the spectral problem was then solved by extending the function $(\eta,u)$ by 0 outside $(0, L)$, by taking its Fourier transform and by using Paley-Wiener theorem. Finally, the problem was reduced to check for which values of $L>0$ two functions are entire for a set of parameters. Then, the authors concluded that the stabilization properties holds if and only if the length $L$ does not belong to the following critical set \[\mathcal{N} := \{\pi \sqrt{\frac{k^2+kl+l^2}{3}}; \ k,l\in \N ^*\}.\]
We point out that the same set was obtaned by Rosier, \cite{Rosier} while studying the boundary controllability of the KdV equation with a single control in $L^2(0,T)$ acting on the Neumann boundary condition. This shows that the linearized Boussinesq system inherits some interesting properties initially observed for the KdV equation.

\subsection{Main results and comments} In the present work, we address the problems described in the previous subsection and our main results provide a partial positive answer for the Problems $\mathcal{A}$ and $\mathcal{B}$.  In order to give an answer for Problem $\mathcal{B}$, we apply the ideas suggested in \cite{CaGa,CaPaRo}, therefore, let us consider
$$
\overline{X}_3 = \{ (\eta, u) \in [ H^3(0,L)\cap H^2_0(0,L) ] ^2; \
 \eta _{xx}(0)=v_{xx}(L)=0\}.
$$
With this notation, one of the main result of this article can be read as follows:
\begin{theorem}\label{main_int1}
Let $T>0$. Then, there exists $\rho=\rho(T)>0$ such that,  for every $(\eta_0,u_0) \in \overline{X}_3$ satisfying
\begin{equation*}
\|(\eta_0,u_0)\|_{ \overline{X}_3 }< \rho,
\end{equation*}
 there exists a unique solution $(\eta, u) \in C([0,T]; \overline{X}_3)$ of \eqref{int1}-\eqref{int2}. Moreover
\begin{align*}
\|(\eta,u)\|_{C([0,T]; \overline{X}_3)} \leq C \|(\eta_0,u_0)\|_{ \overline{X}_3}
\end{align*}
for some positive constant $C=C(T)$.
\end{theorem}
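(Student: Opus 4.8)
The plan is to realize the solution of \eqref{int1}--\eqref{int2} as a fixed point of the Duhamel map associated with the linearized system \eqref{b1.a}--\eqref{int2}. Let $A$ be the linear operator governing \eqref{b1.a} with domain encoding the ten boundary conditions in \eqref{int2}, and let $\{S(t)\}_{t\ge0}$ be the $C_0$-semigroup it generates. Collecting the nonlinear terms of \eqref{int1} into
\[
F(\eta,u)=\Big(a_1(\eta u)_x+a_2(\eta u_{xx})_x,\ a_1uu_x+a_3(\eta\eta_{xx})_x+a_4u_xu_{xx}\Big),
\]
a solution is a fixed point of
\[
\Gamma(\eta,u)(t)=S(t)(\eta_0,u_0)-\int_0^tS(t-s)F(\eta,u)(s)\,ds.
\]
I would not run the contraction in $C([0,T];\overline{X}_3)$ alone, but in a closed ball of a resolution space $\mathcal{Z}_T=C([0,T];\overline{X}_3)\cap L^2(0,T;H^5(0,L))$ (together with the boundary-trace norms appearing in \eqref{dis}), so that the smoothing gained from the fifth-order dispersion is available to absorb the top-order nonlinear terms. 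Here the hypotheses $b>0$ and $a\neq b$ are used to guarantee that the fifth-order principal part genuinely dominates, which is what powers that smoothing.

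First I would assemble the linear estimates. Generation of $\{S(t)\}$ and the bound $\|S(\cdot)(\eta_0,u_0)\|_{C([0,T];\overline{X}_3)}\le C\|(\eta_0,u_0)\|_{\overline{X}_3}$ follow from the well-posedness of \eqref{b1.a}--\eqref{int2}. The decisive input is a Kato-type smoothing estimate: testing \eqref{b1.a} against suitable $x$-weighted multipliers and integrating by parts produces, thanks to the dissipative boundary conditions \eqref{int2}, a gain of two spatial derivatives,
\[
\|(\eta,u)\|_{L^2(0,T;H^5(0,L))}\le C\big(\|(\eta_0,u_0)\|_{\overline{X}_3}+\|(\eta,u)\|_{C([0,T];\overline{X}_3)}\big),
\]
in which the boundary contributions $\eta_{xx}(0,t),\eta_{xx}(L,t)$ are exactly those controlled by the energy dissipation in \eqref{dis}. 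The same computation applied to the inhomogeneous equation yields the sharp Duhamel bound for a divergence-form forcing: if $F=\partial_xG$, then
\[
\Big\|\int_0^{\cdot}S(\cdot-s)\,\partial_xG(s)\,ds\Big\|_{\mathcal{Z}_T}\le C\,\|G\|_{L^2(0,T;H^2(0,L))}.
\]

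The nonlinear closure then rests on the fact that every term of $F$ is in divergence form, $F=\partial_xG$ with
\[
G=\Big(a_1\eta u+a_2\eta u_{xx},\ \tfrac{a_1}{2}u^2+a_3\eta\eta_{xx}+\tfrac{a_4}{2}u_x^2\Big).
\]
Since $H^3(0,L)$ and $H^2(0,L)$ are Banach algebras, the worst components obey $\|\eta u_{xx}\|_{H^2}\lesssim\|\eta\|_{H^3}\|u\|_{H^4}$ and similarly for $\eta\eta_{xx}$; keeping one factor in $L^\infty_tH^3$ and inserting the $L^2_tH^5$ smoothing for the other gives the quadratic estimate $\|G\|_{L^2(0,T;H^2)}\le C\,\|(\eta,u)\|_{C([0,T];\overline{X}_3)}\|(\eta,u)\|_{L^2(0,T;H^5)}\le C\,\|(\eta,u)\|_{\mathcal{Z}_T}^2$. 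Combined with the two linear bounds above, this shows that $\Gamma$ maps the ball $\{\,\|(\eta,u)\|_{\mathcal{Z}_T}\le R\,\}$ into itself and is a contraction there once $R\sim\|(\eta_0,u_0)\|_{\overline{X}_3}$ and $\rho$ are small; the same bilinear estimate applied to differences gives uniqueness and Lipschitz dependence, hence the announced inequality $\|(\eta,u)\|_{C([0,T];\overline{X}_3)}\le C\|(\eta_0,u_0)\|_{\overline{X}_3}$.

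I expect the genuine difficulty to be the linear smoothing step rather than the fixed point itself. On the bounded interval the gain of two derivatives must be proved up to the boundary, which forces a careful bookkeeping of every boundary term generated by integrating the fifth-order operator by parts against the weighted multipliers; here the precise form of the two-parameter conditions \eqref{int2} — in particular the coupling $u_{xx}\pm\alpha_i\eta_{xx}=0$ — is essential, since it is what makes the surviving boundary contributions have the right sign or be absorbable by the dissipation in \eqref{dis}. The top-order terms $(\eta u_{xx})_x$ and $(\eta\eta_{xx})_x$ sit essentially at the threshold permitted by this gain, so any loss in the smoothing estimate would break the closure. Making these boundary computations clean, and checking that the compatibility conditions $\eta_{xx}(0)=u_{xx}(L)=0$ built into $\overline{X}_3$ are propagated so that $\Gamma(\eta,u)$ remains in $\overline{X}_3$, is where the real work lies.
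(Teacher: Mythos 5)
Your fixed-point scheme is standard, but the two linear estimates it rests on are asserted rather than proved, and they are precisely the point at which this route breaks down. The homogeneous smoothing bound $\|(\eta,u)\|_{L^2(0,T;H^5)}\lesssim\|(\eta_0,u_0)\|_{\overline{X}_3}+\dots$ and the inhomogeneous bound $\bigl\|\int_0^{\cdot}S(\cdot-s)\partial_xG(s)\,ds\bigr\|_{\mathcal{Z}_T}\lesssim\|G\|_{L^2(0,T;H^2)}$ are not consequences of the $L^2\to L^2(0,T;H^2)$ Kato estimate \eqref{kato1}: to lift that estimate by three derivatives you must either commute the weighted multipliers with three spatial derivatives---which generates boundary traces of $\eta_{xxxx}$, $\eta_{xxxxx}$, $u_{xxxx}$, $u_{xxxxx}$ at $x=0,L$ that are constrained neither by \eqref{int2} nor by the dissipation identity \eqref{dis}---or identify $[X_0,D(A)]_{3/5}$ with $\overline{X}_3$ and apply $A^{3/5}$, which is unjustified for this non-skew-adjoint generator whose domain couples $\eta_{xx}$ and $u_{xx}$ at the boundary. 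You correctly flag this as ``the genuine difficulty,'' but offering no argument for it leaves the proof without its engine: your bilinear estimate $\|\eta u_{xx}\|_{H^2}\lesssim\|\eta\|_{H^3}\|u\|_{H^4}$ genuinely consumes the $L^2_tH^4$ (a fortiori $L^2_tH^5$) norm of the iterate, so the closure cannot be rescued by weakening the smoothing claim.

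The paper takes the opposite route and never proves (indeed, states it could not prove) any such higher-order smoothing. It measures the nonlinearity only in $L^2(0,T;L^2)\subset L^2(0,T;\overline{X}_{-2})$, which costs nothing beyond the $C([0,T];\overline{X}_3)$ norm of the iterates (e.g.\ $\|\eta u_{xxx}\|_{L^2}\le\|\eta\|_{L^\infty}\|u_{xxx}\|_{L^2}\lesssim\|\eta\|_{H^3}\|u\|_{H^3}$), and instead gains the missing derivatives on the \emph{linear solution operator} by defining solutions of \eqref{nhs1} by transposition against the $\overline{X}_3$ inner product, as in \eqref{transp1}: the adjoint problem is driven by the conservative, skew-adjoint group (whose interpolation spaces $\overline{X}_s$ are computed explicitly), the hidden $H^{1/5}$ regularity of the adjoint traces $\varphi_{xx}(L,\cdot)$, $\psi_{xx}(0,\cdot)$ makes the boundary pairings meaningful, and Riesz representation yields a $C([0,T];\overline{X}_3)$ solution for $\overline{X}_{-2}$-valued forcing. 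The feedback conditions $u_{xx}(0,t)+\alpha_1\eta_{xx}(0,t)=0$, $u_{xx}(L,t)-\alpha_2\eta_{xx}(L,t)=0$ are then recovered as inhomogeneous boundary data through a first fixed point, and the nonlinearity through a second. To make your argument complete you would have to supply full proofs of the two displayed linear estimates on the bounded interval with the boundary conditions \eqref{int2}; absent that, the duality/transposition route is the one that actually closes.
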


In order to prove Theorem \ref{main_int1} we first analyze the linearized model by using a semigroup approach. Moreover, by using multiplier techniques, we also obtain the so-called Kato smoothing effect, which is crucial to study the stabilization problem. In what concerns the full system, the idea is to combine the linear theory and a fixed point argument. However, the linear theory described above seems to be unable to provide the a priori bounds needed to use a fixed point argument. To overcome this difficult, we consider solutions obtained \textit{via} transposition method, which leads to consider a duality argument and the solutions of the corresponding adjoint system. Then, the existence and uniqueness can be proved by using the Riesz-representation theorem that gives, at first, a solution which is not continuous in time, only $L^\infty$. The continuity is then obtained with the aid of what is known as \textit{hidden regularity} of the boundary terms of the adjoint system. In fact, we prove that such system has a class of solutions which belong to appropriate spaces possessing boundary regularity. On the other hand, it is also important to note that identity \eqref{dis} does not provide any global (in time) a priori bounds for the solutions. Consequently, it does not lead to the existence of a global (in time) solution in the energy space. The same lack of a priori bounds occurs when higher order Sobolev norms are considered (e.g. $H^s$-norm).

With the damping mechanism proposed in (\ref{int2}), the stabilization of the linearized higher order Boussinesq system  (\ref{b1.a}) holds for \textit{any length} of the domain. Thus, the second main result of this paper is the following:
\begin{theorem}\label{main_int}
Assume that $\alpha_1>0$, $\alpha_2 > 0$ and $L>0$. Then, there exist some constants $C_0, \mu_0 > 0$, such that, for any $(\eta_0,u_0) \in X_0:=[L^{2}\left(  0,L\right)]^2$, system (\ref{b1.a})-(\ref{int2}) admits a unique solution $$(\eta,u)\in C^{0}\left(  \left[  0,T\right]  ;X_0\right)  \cap
L^{2}\left(  0,T;[H^{2}\left(  0,L\right)]^2  \right)$$ satisfying
\begin{equation*}
\|(\eta(t),u(t))\|_{X_0}\leq C_0e^{-\mu_0 t}\|(\eta_0,u_0)\|_{X_0}, \quad \forall t \geq 0.
\end{equation*}
\end{theorem}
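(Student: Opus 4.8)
The plan is to prove Theorem~\ref{main_int} via a semigroup argument combined with an observability inequality. First I would rewrite the linearized system \eqref{b1.a}--\eqref{int2} as an abstract Cauchy problem $U_t = \mathcal{A}U$ on the Hilbert space $X_0 = [L^2(0,L)]^2$, where $U = (\eta,u)$ and $\mathcal{A}$ is the spatial operator with domain $D(\mathcal{A})$ encoding the boundary conditions \eqref{int2}. The key structural fact, read off from \eqref{dis} with the nonlinear terms removed, is the dissipation identity
\begin{equation*}
\frac{d}{dt}E(t) = -\alpha_1 b\,|\eta_{xx}(0,t)|^2 - \alpha_2 b\,|\eta_{xx}(L,t)|^2 \leq 0,
\end{equation*}
so $\mathcal{A}$ is dissipative. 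I would then verify that $\mathcal{A}$ is densely defined with dissipative adjoint (or directly that $0 \in \rho(\mathcal{A})$ by solving the resolvent equation with the given boundary data), so that by Lumer--Phillips $\mathcal{A}$ generates a $C_0$-semigroup of contractions on $X_0$. This yields the well-posedness and the regularity $(\eta,u)\in C^0([0,T];X_0)\cap L^2(0,T;[H^2(0,L)]^2)$, the latter being the \emph{Kato smoothing effect} already announced in the excerpt as obtainable by multiplier techniques.

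Next I would establish the exponential decay. The standard route is to reduce the uniform decay estimate $\|U(t)\|_{X_0}\le C_0 e^{-\mu_0 t}\|U_0\|_{X_0}$ to an \emph{observability inequality}: there exist $T>0$ and $C>0$ with
\begin{equation*}
\|U_0\|_{X_0}^2 \leq C\int_0^T \left( |\eta_{xx}(0,t)|^2 + |\eta_{xx}(L,t)|^2 \right)\,dt.
\end{equation*}
Since the energy satisfies $E(T)-E(0) = -\alpha_1 b\int_0^T|\eta_{xx}(0,t)|^2\,dt - \alpha_2 b\int_0^T|\eta_{xx}(L,t)|^2\,dt$, such an inequality forces $E(T)\le \gamma E(0)$ for some $\gamma<1$, and iterating over successive time intervals $[nT,(n+1)T]$ gives the claimed exponential decay with $\mu_0 = -\tfrac{1}{2T}\ln\gamma$.

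To prove the observability inequality I would argue by contradiction and compactness: suppose it fails, take a normalized sequence of solutions whose boundary observation tends to zero, and use the $L^2(0,T;[H^2]^2)$ smoothing bound together with the compact embedding $H^2 \hookrightarrow L^2$ to extract a convergent subsequence. The limit is a solution with $\eta_{xx}(0,t)=\eta_{xx}(L,t)=0$ on $(0,T)$, which reduces the problem to a \emph{unique continuation property} for the stationary spectral problem $\mathcal{A}\Phi = \lambda\Phi$ subject to the enhanced boundary conditions. Because the damping here acts on the second derivatives rather than producing a boundary term of definite sign in all components, I expect the unique continuation step to be the main obstacle; following the strategy described in the excerpt for the KdV--KdV case (cf.\ \cite{CaPaRo}), I would extend the eigenfunction by zero outside $(0,L)$, pass to the Fourier transform, and analyze for which $L$ the resulting entire-function conditions force $\Phi\equiv 0$. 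The theorem asserts decay for \emph{every} $L>0$, so the payoff of this analysis must be that, unlike in the less-damped setting of \cite{CaPaRo} where a critical set $\mathcal{N}$ appears, the two-boundary damping on $\eta_{xx}$ rules out any nontrivial eigenfunction regardless of $L$; verifying that no critical length survives is precisely where the argument must be pushed carefully.
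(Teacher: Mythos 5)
Your overall architecture coincides with the paper's: Lumer--Phillips generation of a contraction semigroup on $X_0$, Kato smoothing by multipliers, reduction of the decay estimate to the observability inequality, and a compactness--uniqueness argument that reduces observability to a unique continuation property for the spectral problem. Up to that point the proposal is sound and matches Propositions \ref{semigroup}--\ref{prop2} and Steps 1--2 of the paper's proof of \eqref{decay}.

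However, the proposal stops exactly where the theorem's content lies. You write that you would ``follow the strategy of \cite{CaPaRo}'' (extension by zero, Fourier transform, Paley--Wiener) and that ``verifying that no critical length survives is precisely where the argument must be pushed carefully'' --- but you do not supply that verification, and the paper explicitly states that the techniques of \cite{CaPaRo} cannot be carried over to this fifth-order system. The paper's actual resolution is different in substance: it first decouples the spectral problem by setting $\varphi=\eta\pm u$, obtaining a single fifth-order ODE $\lambda\varphi+\varphi'-a\varphi'''+b\varphi'''''=0$ with $\varphi=\varphi'=\varphi''=0$ at both endpoints; multiplying by $\overline{\varphi}$ shows $\lambda=ir$ is purely imaginary; after extension by zero and Fourier transform one must decide when $f_\alpha(\xi,L)=N_\alpha(\xi,L)/q(\xi)$ is entire, with $q(\xi)=b\xi^5+a\xi^3+\xi+r$ and $N_\alpha$ built from the traces of $b\varphi'''$ and $b\varphi''''$. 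The key facts (Claim \ref{roots} and Lemmas \ref{mobiuslemma2}--\ref{mobiuslemma}, the latter two taken from Santos et al.\ \cite{santos}) are that $q$ has exactly one simple real root and two pairs of complex conjugate roots, that no M\"obius transformation $M$ can satisfy $M(\xi)=e^{-iL\xi}$ at four distinct points $\xi_1,\xi_2,\bar{\xi}_1,\bar{\xi}_2$, and that in the degenerate case $d(\alpha)=\alpha_1\alpha_3-\alpha_2\alpha_4=0$ the zeros of $N_\alpha(\cdot,L)$ have at most two distinct imaginary parts while the roots of $q$ have at least three. These together force $\varphi\equiv0$ for \emph{every} $L>0$, with no critical set. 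Without this (or an equivalent) argument, your proof establishes only the conditional statement ``observability holds provided unique continuation holds,'' which is the easy half; the M\"obius-transform analysis is the missing idea you would need to supply.
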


In order to prove Theorem \ref{main_int} we proceed as in \cite{CaPaRo,pazoto2008}, i.e, combining multipliers and compactness arguments which reduces the problem to show a unique continuation result for the state operator. To prove this result, we extend the solution under consideration by zero in $\mathbb{R}\setminus[0,L]$ and take the Fourier transform. However, due to the complexity of the system, after taking the Fourier transform of the extended solution $(\eta,u)$  it is not possible to use the same techniques used in \cite{CaPaRo}.
Thus, to prove our main result we proceed as Santos \textit{et al.} \cite{santos}.

For a better understanding we will introduce a general framework to explain the idea of the proof. After to take Fourier transform, the issue is to establish when a certain quotient of entire functions still turn out to be an entire function. We then pick a polynomial $q:\mathbb{C}\to\mathbb{C}$ and a family of functions $$N_{\alpha}:\mathbb{C}\times(0,\infty)\to\mathbb{C},$$
with $\alpha\in\mathbb{C}^4\setminus \{0\}$, whose restriction $N_{\alpha}(\cdot,L)$ is entire for each $L>0$. Next, we consider a family of functions $f_{\alpha}(\cdot,L)$, defined by $$f_{\alpha}(\mu,L)=\frac{N_{\alpha}(\mu,L)}{q(\mu)},$$ in its maximal domain. The problem is then reduced to determine $L>0$ for which there exists $\alpha\in\mathbb{C}^4\setminus \{0\}$ such that $f_{\alpha}(\cdot,L)$ is entire. 
In contrast with the analysis developed in \cite{CaPaRo}, this approach does not provide us an explicit characterization of a critical set, if it exists, only ensure that the roots of $f$ have a relations with the M\"obius transform (see the proof of Theorem \ref{main_int} above).


The remaining part of this paper is organized as follows: In Section \ref{Sec1}, we establish the well-posedness of the linearized system. We also derived a series of linear estimates for a conservative linear Boussinesq system which will we used to prove the well-posedness for the full system (\ref{int1})-(\ref{int2}).  Section \ref{Sec2}, is then devoted to prove the well-posedness for the nonlinear system. In section \ref{Sec3}, we prove an \textit{observability inequality} associated to (\ref{b1.a})-(\ref{int2}), which plays a crucial role to get second result of this paper, Theorem \ref{main_int}, proved in the same section.  Finally, some additional comments and open problems are proposed in the Section \ref{Sec4}. We also include an Appendix with a detailed derivation of the system (\ref{int1}).

\section{Well-posedness: Linear system}\label{Sec1}
The goal of the section is to prove the well-posedness of the linearized system. In order to do that, we use the semigroup theory and multiplier techniques, which allow us to derived so-called Kato smoothing effect. We also use the same approach to study a similar conservative linear Boussinesq system that will be used to study the full system (see, Definition \ref{transposition}).

\subsection{Well-posedness: linear system}
We will study the existence of solutions of the linear homogeneous system associated to \eqref{b1.a}, namely
\begin{equation}\label{homo1}
\begin{cases}
\eta_t + u_x-au_{xxx}+b u_{xxxxx}= 0, & \text{in} \,\, (0,L)\times (0,T),\\
u_t +\eta_x -a\eta_{xxx}+b \eta_{xxxxx} =0,  & \text{in} \,\, (0,L)\times (0,T), \\
\eta(0,t)=\eta(L,t)=\eta_{x}(0,t)=\eta_x(L,t)=0,  & \text{in} \,\,  (0,T), \\
u(0,t)=u(L,t)=u_{x}(0,t)=u_x(L,t)=0,  & \text{in} \,\,  (0,T),  \\
u_{xx}(0,t)+\alpha_1\eta_{xx}(0,t)=0, & \text{in} \,\, (0,T), \\
u_{xx}(L,t)-\alpha_2\eta_{xx}(L,t)=0,  & \text{in} \,\, (0,T), \\
\eta(x,0)= \eta_0(x), \quad u(x,0)=  u_0(x), & \text{in} \,\,  (0,L).
\end{cases}
\end{equation}

We consider $X_0$ with the usual inner product and  the operator $A:D(A)\subset X_0 \to X_0$ with domain
\begin{equation*}
D(A)=\{(\eta,u)\in [H^5(0,L)\cap H^2_0(0,L)]^2: u_{xx}(0)+\alpha_1\eta_{xx}(0)=0, u_{xx}(L)-\alpha_2\eta_{xx}(L)=0\},
\end{equation*}
defined by
\begin{equation*}
A(\eta,u)=(-u_x+au_{xxx}-bu_{xxxxx},-\eta_x+a\eta_{xxx}-b\eta_{xxxxx}).
\end{equation*}
Let us denote $X_5= D(A)$. Moreover, we introduce the Hilbert space $$X_{5\theta} := [X_0,X_5]_{[\theta]}, \quad \text{for $0<\theta <5$}, $$ where $[X_0,X_5]_{[\theta]}$ denote the the Banach space obtained by the complex interpolation method (see, e.g., \cite{bergh}).

Then, the following result holds:
\begin{proposition}\label{semigroup}
If $\alpha_i\geq 0$, $i=1,2$, then $A$ generates a $C_0$-semigroup of contraction $(S(t))_{t\geq 0}$ in $X_0$.
\end{proposition}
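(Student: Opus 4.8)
The plan is to invoke the Lumer--Phillips theorem: since $X_0=[L^2(0,L)]^2$ is a Hilbert space, it suffices to show that $A$ is densely defined and dissipative together with a range condition $\mathrm{Range}(\lambda I-A)=X_0$ for some $\lambda>0$; equivalently, that $A$ is densely defined, closed, and that both $A$ and its adjoint $A^*$ are dissipative. Density is immediate because $[C_c^\infty(0,L)]^2\subset D(A)$ (compactly supported fields vanish to all orders at the endpoints and hence satisfy every condition defining $D(A)$), and this space is dense in $X_0$.

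For dissipativity I would take $U=(\eta,u)\in D(A)$ and compute $\langle AU,U\rangle_{X_0}$ by integrating by parts, using the eight conditions $\eta=\eta_x=u=u_x=0$ at $x=0,L$. The first-order terms combine into $-\int_0^L(\eta u)_x\,dx=-[\eta u]_0^L=0$; the third-order terms cancel in pairs after integration by parts; and the fifth-order terms reduce, after integrating by parts repeatedly, to the single surviving boundary expression $-b\,[\,u_{xx}\eta_{xx}\,]_0^L$. Inserting the corner conditions $u_{xx}(0)=-\alpha_1\eta_{xx}(0)$ and $u_{xx}(L)=\alpha_2\eta_{xx}(L)$ then yields
\begin{equation*}
\langle AU,U\rangle_{X_0}=-b\alpha_1|\eta_{xx}(0)|^2-b\alpha_2|\eta_{xx}(L)|^2\le 0,
\end{equation*}
since $b>0$ and $\alpha_1,\alpha_2\ge 0$. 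This is exactly the linear part of the energy identity \eqref{dis}, so the computation is essentially already in hand.

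It remains to handle the range, and here I would pass to the adjoint rather than solve the stationary system by hand. Integrating by parts in $\langle AU,V\rangle$ shows that $A^*$ coincides with $-A$ on the interior, $A^*(\phi,\psi)=(\psi_x-a\psi_{xxx}+b\psi_{xxxxx},\,\phi_x-a\phi_{xxx}+b\phi_{xxxxx})$, and that the requirement that all boundary terms vanish for every $U\in D(A)$ forces $(\phi,\psi)\in[H^5(0,L)\cap H^2_0(0,L)]^2$ together with the adjoint corner conditions $\psi_{xx}(0)=\alpha_1\phi_{xx}(0)$ and $\psi_{xx}(L)=-\alpha_2\phi_{xx}(L)$. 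Repeating the integration by parts above for $A^*$ gives $\langle A^*V,V\rangle_{X_0}=-b\alpha_1|\phi_{xx}(0)|^2-b\alpha_2|\phi_{xx}(L)|^2\le 0$, so $A^*$ is dissipative as well. Since $A$ is also closed — its graph norm is equivalent to the $[H^5(0,L)]^2$ norm on $D(A)$, and $D(A)$ is closed in $[H^5(0,L)]^2$ because it is cut out by continuous trace conditions — the standard corollary of Lumer--Phillips (densely defined closed $A$ with $A$ and $A^*$ both dissipative) gives that $A$ generates a $C_0$-semigroup of contractions.

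I expect the main obstacle to be the bookkeeping in the fifth-order integrations by parts: tracking which boundary terms survive under the $H^2_0$ conditions and correctly pinning down $D(A^*)$, whose sign pattern in the adjoint corner conditions is easy to get wrong. A self-contained alternative that avoids the adjoint is to verify the range condition directly: the change of variables $p=\eta+u$, $q=\eta-u$ diagonalizes the interior operator into two scalar fifth-order constant-coefficient ODEs, solvable explicitly through the exponential basis of their characteristic polynomials, with the free constants fixed by the (now only partially coupled) boundary conditions. The delicate point there is proving that the resulting linear algebraic system is nonsingular, which is precisely why the adjoint route is cleaner.
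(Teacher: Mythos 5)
Your proposal is correct and follows essentially the same route as the paper: both arguments note that $A$ is densely defined and closed and then verify that $A$ and $A^*$ are dissipative, with the same identification of $D(A^*)$ (including the sign flip $\psi_{xx}(0)-\alpha_1\varphi_{xx}(0)=0$, $\psi_{xx}(L)+\alpha_2\varphi_{xx}(L)=0$) and the same boundary term $-b\alpha_1|\eta_{xx}(0)|^2-b\alpha_2|\eta_{xx}(L)|^2$. The extra detail you supply on density and closedness, and the alternative diagonalization $p=\eta+u$, $q=\eta-u$, are fine but not needed.
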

\begin{proof}
Clearly, $A$ is densely defined and closed, so we are done if we prove that $A$ and its adjoint $A^*$ are both dissipative in $X_0$. It is easy to see that $$A^*:D(A^*)\subset X_0 \longrightarrow X_0$$ is given by $A^*(\varphi,\psi)=(\psi_x-a\psi_{xxx}+b\psi_{xxxxx},\varphi_x-a\varphi_{xxx}+b\varphi_{xxxxx})$ with domain
\begin{align*}
D(A^*)=\{(\varphi,\psi)\in X_5&: \varphi(0)=\varphi(L)=\varphi_{x}(0)=\varphi_x(L)=0, \\
&\ \ \psi(0)=\psi(L)=\psi_{x}(0)=\psi_x(L)=0, \\
&\ \ \psi_{xx}(0)-\alpha_1\varphi_{xx}(0)=0,  \psi_{xx}(L)+\alpha_2\varphi_{xx}(L)=0\}.
\end{align*}
Pick any $(\eta,u) \in D(A)$. Multiplying the first equation of \eqref{homo1} by $\eta$, the second one by $u$ and integrating by parts, we obtain
\begin{align*}
\left( A(\eta,u),(\eta,u)\right)_{X_0}=-\alpha_2 b\eta_{xx}^2(L)-\alpha_1b\eta_{xx}^2(0) \leq 0,
\end{align*}
which demonstrates that $A$ is a dissipative operator in $X_0$. Analogously, we can deduce that, for any $(\varphi,\psi) \in D(A^*)$,
\begin{align*}
\left( A^*(\varphi,\psi),(\varphi,\psi)\right)_{X_0}=-\alpha_2 b\varphi_{xx}^2(L)-\alpha_1b\varphi^2_{xx}(0) \leq 0,
\end{align*}
so that $A^*$ is dissipative, as well. Thus, the proof is complete.
\end{proof}

As a direct consequence of Proposition \ref{semigroup} and the general theory of evolution equation, we have the following existence and uniqueness result:

\begin{proposition}\label{existence}
Let $(\eta_0,u_0)\in X_0$. There exists a unique mild solution $(\eta,u)=S(\cdot)(\eta_0,u_0)$ of \eqref{homo1} such that $(\eta,u)\in C([0,T];X_0)$. Moreover, if $(\eta_0,u_0)\in D(A)$, then \eqref{homo1} has a unique (classical) solution $(\eta,u)$ such that $$(\eta,u)\in C([0,T];D(A))\cap C^1(0,T;X_0).$$
\end{proposition}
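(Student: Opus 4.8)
The plan is to deduce Proposition \ref{existence} as a direct application of the Lumer--Phillips theorem, for which Proposition \ref{semigroup} has already done the essential work. Recall that Proposition \ref{semigroup} establishes that $A$ generates a $C_0$-semigroup of contractions $(S(t))_{t\geq 0}$ on $X_0$. The standard semigroup theory (see, e.g., Pazy) then furnishes the two regularity statements essentially for free; the proof is largely bookkeeping, so I would keep it short and point to the relevant abstract theorems rather than reproving them.

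\smallskip

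First I would treat the mild-solution statement. Since $A$ generates a $C_0$-semigroup $(S(t))_{t\geq 0}$, for every initial datum $(\eta_0,u_0)\in X_0$ the function $(\eta,u)(t):=S(t)(\eta_0,u_0)$ is, by definition, the unique mild solution of the abstract Cauchy problem $\frac{d}{dt}(\eta,u)=A(\eta,u)$, $(\eta,u)(0)=(\eta_0,u_0)$, which is precisely \eqref{homo1}. The strong continuity of the semigroup gives $t\mapsto S(t)(\eta_0,u_0)\in C([0,T];X_0)$ immediately. Uniqueness in the class $C([0,T];X_0)$ follows from the uniqueness of mild solutions for semigroup generators, or equivalently from the contraction estimate $\|S(t)\|\leq 1$ applied to the difference of two solutions.

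\smallskip

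Next I would treat the classical-solution statement. The key abstract fact is that when $(\eta_0,u_0)\in D(A)$, the orbit $S(\cdot)(\eta_0,u_0)$ is continuously differentiable, satisfies $\frac{d}{dt}S(t)(\eta_0,u_0)=AS(t)(\eta_0,u_0)=S(t)A(\eta_0,u_0)$, and remains in $D(A)$ for all $t\geq 0$. This yields at once that $(\eta,u)\in C^1(0,T;X_0)$ and that $(\eta,u)\in C([0,T];D(A))$, where $D(A)=X_5$ is endowed with the graph norm (equivalently, with its natural $[H^5(0,L)\cap H^2_0(0,L)]^2$ topology on the closed subspace cut out by the boundary conditions). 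One should briefly note that $A$ is closed, so that $D(A)$ with the graph norm is a Banach space and the continuity into $D(A)$ is meaningful; this closedness was already recorded at the start of the proof of Proposition \ref{semigroup}.

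\smallskip

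I do not anticipate a genuine obstacle here, since all the analytic content has been absorbed into the generation result of Proposition \ref{semigroup}. The only point requiring a modicum of care is the identification of the abstract Cauchy problem with the PDE system \eqref{homo1}: one must verify that membership in $D(A)$ encodes exactly the boundary conditions in \eqref{homo1} (the conditions $\eta(0)=\eta(L)=\eta_x(0)=\eta_x(L)=0$, the analogous ones for $u$, and the two coupling conditions $u_{xx}(0)+\alpha_1\eta_{xx}(0)=0$, $u_{xx}(L)-\alpha_2\eta_{xx}(L)=0$) and that the action of $A$ reproduces the spatial differential operator. Since $D(A)$ was defined precisely with these conditions, this verification is immediate, and the proof reduces to citing the Lumer--Phillips / Hille--Yosida apparatus.
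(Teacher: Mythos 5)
Your proposal is correct and matches the paper's approach: the paper states this proposition without proof, as ``a direct consequence of Proposition \ref{semigroup} and the general theory of evolution equations,'' which is exactly the Lumer--Phillips/Pazy argument you spell out. Your version merely makes explicit the standard semigroup facts that the paper leaves implicit.
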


The following proposition provides useful estimates for the standard energy and the Kato smoothing effect for the mild solutions of \eqref{homo1}.
\begin{proposition}\label{prop2}
Let $(\eta_0,u_0) \in X_0$ and $(\eta(t),u(t))=S(t)(\eta_0,u_0)$. Then, for any $T>0$, we have that
\begin{multline}\label{energia1}
\quad \|(\eta_0(x),u_0(x))\|^2_{X_0}-\|(\eta(x,T),u(x,T))\|^2_{X_0}\\
= \int_0^T\left(\alpha_2 b|\eta_{xx}(L,t)|^2+\alpha_1b|\eta_{xx}(0,t)|^2\right) dt \qquad \qquad \qquad 
\end{multline}
and
\begin{equation}\label{energia2}
\begin{split}
\frac{T}{2}\|(\eta_0(x),u_0(x))\|^2_{X_0}&=\frac12\|(\eta(x,t),u(x,t))\|^2_{L^2(0,T;X_0)}\\
&+ \alpha_2 b\int_0^T(T-t)|\eta_{xx}(L,t)|^2dt +\alpha_1b\int_0^T(T-t)|\eta_{xx}(0,t)|^2dt.
\end{split}
\end{equation}
Furthermore, $(\eta,u) \in L^2(0,T;X_2)$ and
\begin{equation}\label{kato1}
\|(\eta,u)\|_{L^2(0,T;X_2)}\leq C\|(\eta_0,u_0)\|_{X_0},
\end{equation}
where $C=C(a,b,T)$ is a positive constant.
\end{proposition}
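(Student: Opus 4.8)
The plan is to prove all three statements first for classical solutions, i.e.\ for $(\eta_0,u_0)\in D(A)$, where the regularity $C([0,T];D(A))\cap C^1(0,T;X_0)$ furnished by Proposition~\ref{existence} legitimizes every integration by parts, and then to pass to mild solutions by density of $D(A)$ in $X_0$ together with the a priori bounds and the strong continuity of $S(t)$. Identity \eqref{energia1} is then just the integrated dissipation law: along a classical solution $t\mapsto\frac12\|(\eta(t),u(t))\|_{X_0}^2$ is differentiable with derivative $(A(\eta,u),(\eta,u))_{X_0}$, which by the computation carried out in the proof of Proposition~\ref{semigroup} equals $-\alpha_2 b|\eta_{xx}(L,t)|^2-\alpha_1 b|\eta_{xx}(0,t)|^2$, so integrating over $[0,T]$ yields \eqref{energia1}. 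For \eqref{energia2} I would integrate this differential identity a second time: writing $\|(\eta(t),u(t))\|_{X_0}^2$ as $\|(\eta_0,u_0)\|_{X_0}^2$ minus $\int_0^t(\alpha_2 b|\eta_{xx}(L,s)|^2+\alpha_1 b|\eta_{xx}(0,s)|^2)\,ds$, integrating in $t$ over $(0,T)$ and exchanging the order of integration replaces $\int_0^T\!\int_0^t ds\,dt$ by $\int_0^T(T-s)\,ds$, which after rearrangement is exactly \eqref{energia2}.

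The substantive point is the Kato estimate \eqref{kato1}, and the decisive preliminary step is to diagonalize the principal part. Setting $w=\eta+u$ and $v=\eta-u$, the system \eqref{homo1} splits into the two scalar fifth-order equations $w_t+w_x-aw_{xxx}+bw_{xxxxx}=0$ and $v_t-v_x+av_{xxx}-bv_{xxxxx}=0$, while the boundary conditions of \eqref{homo1} still force $w,v,w_x,v_x$ to vanish at both endpoints. To each scalar equation I would apply Kato's multiplier, multiplying the $w$-equation by $xw$ and the $v$-equation by $xv$ and integrating over $(0,L)\times(0,T)$. Using the pointwise identities $\phi\phi_{xxxxx}=\frac{d}{dx}(\phi\phi_{xxxx}-\phi_x\phi_{xxx}+\frac12\phi_{xx}^2)$ and $\phi\phi_{xxx}=\frac{d}{dx}(\phi\phi_{xx}-\frac12\phi_x^2)$, one integration by parts against the weight $x$ converts the fifth- and third-order terms into the favourable interior contributions $\frac{5b}{2}\int_0^T\!\int_0^L\phi_{xx}^2$ and $\frac{3a}{2}\int_0^T\!\int_0^L\phi_x^2$ (with $a,b>0$ both landing on the same, good, side of the identity), plus a term $\frac12\int_0^T\!\int_0^L\phi^2$ coming from transport and the time term $\frac12\int_0^L x\phi^2\,dx$ evaluated between $t=0$ and $t=T$. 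Crucially, the weight $x$ annihilates every boundary contribution at $x=0$, and since $\phi=\phi_x=0$ at $x=L$ the only surviving boundary term is a multiple of $\int_0^T\phi_{xx}(L,t)^2\,dt$.

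It then remains to bound the right-hand side by the data. The time term is at most $\frac{L}{2}(\|\phi(T)\|_{L^2}^2+\|\phi_0\|_{L^2}^2)\lesssim\|(\eta_0,u_0)\|_{X_0}^2$, because \eqref{energia1} shows the energy is nonincreasing; the term $\int_0^T\!\int_0^L\phi^2$ is $\lesssim\|(\eta,u)\|_{L^2(0,T;X_0)}^2\lesssim\|(\eta_0,u_0)\|_{X_0}^2$ by \eqref{energia2}; and the boundary term reduces, through $w_{xx}(L)=(1+\alpha_2)\eta_{xx}(L)$ and $v_{xx}(L)=(1-\alpha_2)\eta_{xx}(L)$, to a multiple of $\int_0^T\eta_{xx}(L,t)^2\,dt$, again bounded via \eqref{energia1}. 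Adding the two scalar estimates and using $w_{xx}^2+v_{xx}^2=2(\eta_{xx}^2+u_{xx}^2)$ gives control of $\int_0^T(\|\eta_{xx}\|_{L^2}^2+\|u_{xx}\|_{L^2}^2)\,dt$; the vanishing Dirichlet and Neumann conditions then upgrade this, via Poincaré, to the full $[H^2(0,L)]^2$ norm. Finally, since $2>3/2$ only the order-zero and order-one boundary conditions persist under complex interpolation, so $X_2=[X_0,X_5]_{[2/5]}$ coincides with $[H^2(0,L)\cap H^2_0(0,L)]^2$ up to equivalent norms, and the $H^2$-in-space bound is precisely \eqref{kato1}.

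I expect the genuine obstacle to be exactly this diagonalization: if one instead tests the coupled system directly by $x\eta$ and $xu$ and adds, the fifth-order terms collapse to the cross term $\int_0^T\!\int_0^L\eta_{xx}u_{xx}$, which is not sign-definite, so the two-derivative gain only becomes visible after passing to the symmetric variables $w$ and $v$ that decouple the dispersion. The remaining labour is bookkeeping: carrying the many boundary terms through the repeated integrations by parts and checking that the weight $x$ removes the $x=0$ endpoint, so that every surviving boundary quantity is one of those already dissipated in \eqref{energia1}.
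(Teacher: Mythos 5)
Your proof is correct, and for the two energy identities it coincides in substance with the paper's (the authors multiply the equations by $\eta$, $u$ and by $(T-t)\eta$, $(T-t)u$ and integrate over $(0,L)\times(0,T)$, which is just the integrated form of your differentiated-energy computation, with the same density passage from $D(A)$ to $X_0$). For the Kato estimate \eqref{kato1} your route is equivalent but organized differently, and your diagnosis of the ``genuine obstacle'' is slightly off. The paper does not test the system with the same-variable multipliers $x\eta$ and $xu$; it uses the \emph{cross} multipliers, $xu$ against the first equation and $x\eta$ against the second. Since the first equation carries the dispersion of $u$ and the second that of $\eta$, this pairing produces the self-products $uu_{xxxxx}$, $\eta\eta_{xxxxx}$, $uu_{xxx}$, $\eta\eta_{xxx}$ directly, and one integration by parts against the weight $x$ yields the sign-definite bulk terms $\tfrac{5b}{2}\iint(|\eta_{xx}|^2+|u_{xx}|^2)$ and $\tfrac{3a}{2}\iint(|\eta_{x}|^2+|u_{x}|^2)$ with a single surviving boundary term at $x=L$ --- no diagonalization is needed, and the non-sign-definite cross term $\iint \eta_{xx}u_{xx}$ you warn about never appears. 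Your decomposition $w=\eta+u$, $v=\eta-u$ achieves exactly the same cancellation: writing $E_1,E_2$ for the two equations, the paper's combination $E_1\cdot xu+E_2\cdot x\eta$ equals $\tfrac12\bigl[(E_1+E_2)\cdot xw-(E_1-E_2)\cdot xv\bigr]$, i.e.\ half the difference of your two scalar identities, so the two computations are linear-algebraically the same. What your version buys is two decoupled scalar identities, each with coherent signs, bounded separately and then added; what it costs is the bookkeeping of the transformed traces $w_{xx}(L)=(1+\alpha_2)\eta_{xx}(L)$ and $v_{xx}(L)=(1-\alpha_2)\eta_{xx}(L)$, both of which are indeed absorbed by \eqref{energia1} exactly as you indicate, together with the sign reversal in the $v$-identity (there the terms in $T$ and the boundary term switch sides, but still land favourably). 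Your closing interpolation remark identifying $X_2$ with $[H^2_0(0,L)]^2$ is consistent with what the paper uses implicitly.
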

\begin{proof}
We obtain the estimates \eqref{energia1}-\eqref{kato1} using multiplier techniques. Pick any $(\eta_0,u_0) \in D(A)$. Multiplying the first equation in \eqref{homo1} by $\eta$, the second one by $u$, adding the resulting equations and integrating over $(0, L)\times (0, T)$, we obtain  \eqref{energia1} after some integration by parts.
The identity may be extended to any initial state $(\eta_0,u_0) \in X_0$ by a density argument. Moreover, multiplying the first equation in \eqref{homo1} by $(T - t)\eta$,  the second by $(T - t)u$ and integrating over $(0, L) \times  (0, T)$ we derive \eqref{energia2} in a similar way.

Let us proceed to the proof of \eqref{kato1}. Multiply the first equation by $x u$, the second one by $x \eta$ and integrate over $(0, L)\times(0, T )$. Adding the obtained equations we get that
\begin{multline*}
\int_0^T\int_0^L x (\eta u)_tdxdt + \int_0^T\int_0^L \frac{x}{2}(|\eta|^2+|u|^2)_xdxdt  \\
-a\int_0^T\int_0^L x (\eta\eta_{xxx}+uu_{xxx})dxdt  +b\int_0^T\int_0^L x (\eta\eta_{xxxxx}+uu_{xxxxx})dxdt =0 .
\end{multline*}
After some integration by parts, it follows that
\begin{multline*}
\int_0^T\int_0^L x (\eta u)_tdxdt - \frac12\int_0^T\int_0^L (|\eta|^2+|u|^2)dxdt
  -\frac{3a}{2}\int_0^T\int_0^L  (|\eta_{x}|^2+|u_{x}|^2)dxdt \\ -\frac{5b}{2}\int_0^T\int_0^L  (|\eta_{xx}|^2+|u_{xx}|^2)dxdt
  +\frac{bL}{2}\int_0^T(|\eta_{xx}(L,t)|^2+|u_{xx}(L,t)|^2)dt =0,
\end{multline*}
hence,
\begin{equation}\label{e1}
\begin{split}
 \frac12\int_0^T\int_0^L (|\eta|^2+|u|^2)dxdt  &+\frac{3a}{2}\int_0^T\int_0^L  (|\eta_{x}|^2+|u_{x}|^2)dxdt \\& +\frac{5b}{2}\int_0^T\int_0^L  (|\eta_{xx}|^2+|u_{xx}|^2)dxdt\\&\leq L \int_0^L (\eta(x,T) u(x,T)-\eta_0(x)u_0(x))dx \\&+\frac{bL(1+\alpha_2^2)}{2}\int_0^T|\eta_{xx}(L,t)|^2dt.
\end{split}
\end{equation}
By using \eqref{energia1} and Young inequality in the first integral of the right hand side in \eqref{e1}, we have that
\begin{equation*}
\begin{split}
 \frac12\int_0^T\int_0^L (|\eta|^2+|u|^2)dxdt &  +\frac{3a}{2}\int_0^T\int_0^L  (|\eta_{x}|^2+|u_{x}|^2)dxdt +\frac{5b}{2}\int_0^T\int_0^L  (|\eta_{xx}|^2+|u_{xx}|^2)dxdt\\ \leq& \frac{L}{2}\int_0^L (|\eta(x,T)|^2 +|u(x,T)|^2)dx\\& +\frac{L}{2}\left( 1+\frac{1+\alpha_2^2}{\alpha_2}\right)\int_0^L(|\eta_0(x)|^2+|u_0(x)|d^2)dx.
 \end{split}
\end{equation*}
Clearly, \eqref{energia1} implies that $E(T)\leq E(0)$, thus
\begin{multline*}
 \int_0^T\int_0^L (|\eta|^2+|u|^2)dxdt  +3a\int_0^T\int_0^L  (|\eta_{x}|^2+|u_{x}|^2)dxdt  +5b\int_0^T\int_0^L  (|\eta_{xx}|^2+|u_{xx}|^2)dxdt \\
\leq L\left( 1+\frac{1+\alpha_2^2}{\alpha_2}\right)\int_0^L(|\eta_0(x)|^2+|u_0(x)|d^2)dx.
\end{multline*}
Then, \eqref{kato1} holds.
\end{proof}

\subsection{Well-posedness: a conservative linear system}
This subsection is devoted to analyze a conservative linear model that will be used to derived the nonlinear theory.

Let us starting by introducing the spaces
$$
X_0:=\overline{X}_0:= L^2(0,L)\times L^2(0,L),
$$
\begin{equation}\label{newA1}
\overline{X}_5=\{(\varphi,\psi)\in [H^5(0,L)\cap H^2_0(0,L)]^2: \varphi_{xx}(0)=\psi_{xx}(L)=0 \},
\end{equation}
and
$$
\overline{X}_{5\theta }:= [\overline{X}_0,\overline{X}_5]_{ [\theta ]}, \quad \textrm{ for } 0<\theta <1,
$$
where $[\overline{X}_0,\overline{X}_5]_{[\theta ]}$ denote the the Banach space obtained by the  complex  interpolation method
(see, e.g., \cite{bergh}).It is easily seen that
\begin{align*}
\overline{X}_1 =& H^1_0(0,L)\times H^1_0(0,L), \\
\overline{X}_2 =& \{ (\eta, v) \in [ H^2(0,L)\cap H^1_0(0,L) ] ^2;  \eta _{x}(L)=v_{x}(0)=0\}. \\
\overline{X}_3 =& \{ (\eta, v) \in [ H^3(0,L)\cap H^2_0(0,L) ] ^2; \
 \eta _{xx}(0)=v_{xx}(L)=0\}.
 \end{align*}
On the other hand, we shall use at some place below the following space
\begin{eqnarray*}
\overline{X}_7 :=  \{ (\eta , v) \in [H^7(0,L)\cap H^2_0(0,L)]^2;\   \eta _{xx}(0)=v_{xx}(L)=0,\\
\qquad\qquad \qquad \qquad\qquad \qquad \qquad \ \ \ \  -a\eta _{3x } (L) + b\eta _{5x} (L) =-av_{3x} (L)+bv_{5x}(L)=0,\\
\qquad\qquad \qquad \qquad\qquad \qquad \qquad \ \  \ \ \ \  -a\eta _{3x } (0) + b\eta _{5x} (0) =-av_{3x} (0)+bv_{5x}(0)=0,\\
\qquad\qquad \qquad \qquad\qquad \qquad \qquad \ \ \ \  -a\eta _{4x } (0) + b\eta _{6x} (0) =-av_{4x} (L)+bv_{6x}(L)=0\},
\end{eqnarray*}
endowed with its natural norm. The space 
$$\overline{X}_{-s}=(\overline{X}_s)'$$ 
denotes the dual of $\overline{X}_s$ with respect to the pivot space $\overline{X}_0=L^2(0,L)\times L^2(0,L)$. 
The bracket $\langle .,. \rangle _{\overline{X}_{-s},\overline{X}_s}$ stands for the duality between $\overline{X}_{-s}$ and $\overline{X}_s$ . 

Now, we turn our attention to the well-posedness of the system associated to the differential operator $\widetilde{A}$, given by
\begin{equation}\label{newA}
\widetilde{A}(\varphi,\psi)=(-\psi_x+a\psi_{xxx}-b\psi_{xxxxx},-\varphi_x+a\varphi_{xxx}-b\varphi_{xxxxx}),
\end{equation}
with domain, $D(\widetilde{A})=\overline{X}_5$. More precisely, we consider the following system 
\begin{equation}
\label{new1}
\begin{cases}
\eta_t + u_x-au_{xxx}+bu_{xxxxx}= 0, & \text{in} \,\, (0,L)\times (0,T),\\
u_t +\eta_x -a\eta_{xxx} +b\eta_{xxxxx}=0,  & \text{in} \,\, (0,L)\times (0,T), \\
\eta(0,t)=\eta(L,t)=\eta_x(0,t)=\eta_x(L,t)=\eta_{xx}(0,t)=0,& \text{on} \,\, (0,T), \\
u(0,t)=u(L,t)=u_x(0,t)=u_x(L,t)=u_{xx}(L,t)=0, & \text{on} \,\, (0,T), \\
\eta(x,0)= \eta_0(x), \quad u(x,0)=  u_0(x), & \text{on} \,\, (0,L).
\end{cases}
\end{equation}

\begin{proposition}\label{propA}
The operator $\widetilde{A}$ is skew-adjoint in $\overline{X}_0$, and thus it generates a group of isometries $(e^{tA})_{t\in \R}$ in $\overline{X}_0$.
\end{proposition}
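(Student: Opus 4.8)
The plan is to prove that $\widetilde{A}$ is skew-adjoint and then invoke Stone's theorem, which identifies skew-adjoint generators with generators of $C_0$-groups of unitary (hence isometric) operators. Concretely, I will verify the two hypotheses of the standard skew-adjointness criterion: that $\widetilde{A}$ is densely defined and skew-symmetric, and that $I-\widetilde{A}$ and $I+\widetilde{A}$ map $\overline{X}_5$ onto $\overline{X}_0$.

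Dense definiteness is immediate since $\overline{X}_5$ contains $[C_c^\infty(0,L)]^2$. For skew-symmetry, write $\mathcal{L}h := -h_x + a h_{xxx} - b h_{xxxxx}$, so that $\widetilde{A}(\varphi,\psi)=(\mathcal{L}\psi,\mathcal{L}\varphi)$. Integrating by parts five times and using that every component of an element of $\overline{X}_5$ lies in $H^2_0(0,L)$ (so that all boundary contributions carrying a factor of the function or of its first derivative drop out), one finds that for $U=(\varphi,\psi)$ and $V=(\Phi,\Psi)$ in $\overline{X}_5$ the only surviving boundary terms are
\begin{equation*}
(\widetilde{A}U,V)_{\overline{X}_0}+(U,\widetilde{A}V)_{\overline{X}_0}=-b\big[\psi_{xx}\Phi_{xx}+\varphi_{xx}\Psi_{xx}\big]_0^L.
\end{equation*}
The extra conditions $\varphi_{xx}(0)=\psi_{xx}(L)=0$ and $\Phi_{xx}(0)=\Psi_{xx}(L)=0$ built into $\overline{X}_5$ annihilate all four evaluations on the right, so $(\widetilde{A}U,V)_{\overline{X}_0}=-(U,\widetilde{A}V)_{\overline{X}_0}$. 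In particular $\operatorname{Re}(\widetilde{A}U,U)_{\overline{X}_0}=0$, whence both $\widetilde{A}$ and $-\widetilde{A}$ are dissipative; this is exactly the purpose of the boundary conditions defining $\overline{X}_5$.

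The heart of the proof is the surjectivity of $I\mp\widetilde{A}$. Given $F=(f,g)\in\overline{X}_0$, solving $(\lambda I-\widetilde{A})U=F$ with $\lambda=\pm1$ amounts to the coupled system $\lambda\varphi-\mathcal{L}\psi=f$, $\lambda\psi-\mathcal{L}\varphi=g$. Setting $p=\varphi+\psi$ and $q=\varphi-\psi$ decouples the principal parts into two scalar constant-coefficient fifth-order ODEs, $\lambda p-\mathcal{L}p=f+g$ and $\lambda q+\mathcal{L}q=f-g$, subject to the ten boundary conditions inherited from $\overline{X}_5$ (eight stating that $p,q$ and their first derivatives vanish at $0$ and $L$, together with the two coupling relations $p_{xx}(0)=-q_{xx}(0)$ and $p_{xx}(L)=q_{xx}(L)$). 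This is a linear two-point boundary value problem with exactly as many boundary conditions as the order, so by the Fredholm alternative for ODEs it is uniquely solvable for every right-hand side precisely when the homogeneous problem has only the trivial solution. But a homogeneous solution $U\in\overline{X}_5$ satisfies $\lambda U=\widetilde{A}U$, and taking the inner product with $U$ and real parts gives $\lambda\|U\|_{\overline{X}_0}^2=\operatorname{Re}(\widetilde{A}U,U)_{\overline{X}_0}=0$; since $\lambda\neq0$ is real this forces $U=0$. Hence $(\lambda I-\widetilde{A})$ is a bijection onto $\overline{X}_0$, and ODE regularity (the fifth derivatives are controlled by lower-order terms and the $L^2$ data) together with the prescribed boundary data place the solution back in $\overline{X}_5=D(\widetilde{A})$.

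Finally, combining the dissipativity of $\pm\widetilde{A}$ with $\operatorname{Ran}(I-\widetilde{A})=\operatorname{Ran}(I+\widetilde{A})=\overline{X}_0$, the Lumer--Phillips theorem shows that both $\widetilde{A}$ and $-\widetilde{A}$ generate $C_0$-semigroups of contractions; equivalently, $\widetilde{A}$ generates a $C_0$-group whose members are contractive in both time directions and therefore isometric. Since the generator of such a group is skew-adjoint, this yields $\widetilde{A}^*=-\widetilde{A}$ and the asserted group of isometries $(e^{t\widetilde{A}})_{t\in\R}$. I expect the surjectivity step to be the main obstacle: one must carefully track the ten boundary conditions under the $p,q$-decoupling and confirm that the resulting two-point problem is genuinely well posed and returns an $H^5$ solution, the dissipativity identity doing the work of ruling out nontrivial homogeneous solutions.
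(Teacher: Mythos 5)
Your proof is correct, but it takes a genuinely different route from the paper. Both arguments begin with the same integration-by-parts computation showing that the only surviving boundary contribution is $-b\bigl[\psi_{xx}\Phi_{xx}+\varphi_{xx}\Psi_{xx}\bigr]_0^L$, which the conditions $\varphi_{xx}(0)=\psi_{xx}(L)=0$ annihilate; this establishes $-\widetilde{A}\subset\widetilde{A}^*$. From there the paper completes the identity $\widetilde{A}^*=-\widetilde{A}$ by a direct computation of the adjoint domain: it picks $(\theta,v)\in D(\widetilde{A}^*)$, tests the defining bound against compactly supported data to deduce $\theta,v\in H^5(0,L)$, and then reads off the boundary conditions of $D(\widetilde{A}^*)$ from the requirement that the residual boundary terms be dominated by the $L^2$-norm of $(\eta,u)$. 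You instead bypass the adjoint-domain analysis entirely and verify the range condition $\operatorname{Ran}(I\pm\widetilde{A})=\overline{X}_0$, reducing the resolvent equation via $p=\varphi+\psi$, $q=\varphi-\psi$ to a pair of scalar fifth-order equations coupled only through the two second-derivative boundary relations, and invoking the Fredholm alternative for the resulting tenth-order two-point problem, with uniqueness of the homogeneous problem supplied by skew-symmetry; Lumer--Phillips applied to $\pm\widetilde{A}$ then yields the isometry group, and the standard criterion (skew-symmetric plus $\operatorname{Ran}(I\pm\widetilde{A})=\overline{X}_0$) gives skew-adjointness. The trade-off is clear: the paper's method produces an explicit description of $D(\widetilde{A}^*)$ (which is what justifies the boundary conditions of the adjoint system used later in the transposition argument), while yours avoids the somewhat delicate regularity-and-trace identification for elements of the adjoint domain at the cost of a careful well-posedness analysis of the boundary value problem, including the bookkeeping of the ten boundary conditions under the $p,q$-decoupling and the $H^5$ regularity of the solution. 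Both are complete and standard.
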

\begin{proof}
We show that $\widetilde{A}^{*}=-\widetilde{A}$. First, we prove that $-\widetilde{A}\subset \widetilde{A}^{*}$. Indeed, for any $(\eta, u), (\theta,v) \in D(\widetilde{A})$, we have after some integration by parts,
\begin{equation*}
\begin{split}
\left( (\theta,v), \widetilde{A}(\eta,u)\right)_{\overline{X}_0} =& -\int_0^L\left[ \theta (u_x-au_{xxx}+bu_{xxxxx})+v(\eta_x-a\eta_{xxx}+b\eta_{xxxxx})\right]dx \\
= &\int_0^L\left[ u (\theta_x-a\theta_{xxx}+b\theta_{xxxxx})+\eta(v_x-av_{xxx}+bv_{xxxxx})\right]dx \\
=& \left( \widetilde{A}(\theta,v), (\eta,u)\right)_{\overline{X}_0}.
\end{split}
\end{equation*}
Now, we prove that $\widetilde{A}^{*}\subset -\widetilde{A}$. Pick any $(\theta, v) \in D(\widetilde{A}^{*})$. Then, for some positive constant $C$, we have that
\begin{align*}
\left | \left( (\theta,v), \widetilde{A}(\eta,u)\right)_{\overline{X}_0} \right| \leq C \|(\eta,u)\|_{\overline{X}_0}, \quad \forall (\eta,u) \in D(A).
\end{align*}
Thus, it follows that
\begin{equation}\label{a2}
\begin{split}
\left|\int_0^L\left[ \theta (u_x-au_{xxx}+bu_{xxxxx})+v(\eta_x-a\eta_{xxx}+b\eta_{xxxxx})\right]dx\right| \\ \leq C \left( \int_0^L (\eta^2+u^2)dx\right)^{\frac12}, \quad \forall (\eta,u) \in D(A).
\end{split}
\end{equation}
Taking $\eta \in C^{\infty}_c(0,L)$ and $u=0$, we deduce from \eqref{a2} that $v \in H^5(0,L)$. Similarly, we obtain that $\theta \in H^5(0,L)$.  Integrating by parts in the left hand side
of \eqref{a2}, we obtain that
\begin{equation*}
\begin{split}
&\left| a\theta(0)u_{xx}(0)-b\theta_x(L)u_{xxx}(L)+b\theta(L)u_{xxxx}(L)-b\theta_{xx}(0)u_{xx}(0)+b\theta_x(0)u_{xxx}(0) \right.\\
&\left.-b\theta(0)u_{xxxx}(0)+ av(L)\eta_{xx}(L)-bv_{xx}(L)\eta_{xx}(L)+bv_x(L)\eta_{xxx}(L) -bv(L)\eta_{xxxx}(L) \right. \\
&\left. -bv_x(0)\eta_{xxx}(0)-bv(0)\eta_{xxxx}(0)\right| \leq C \left( \int_0^L (\eta^2+u^2)dx\right)^{\frac12},
\end{split}
\end{equation*}
for all $(\eta,u) \in D(A)$. Then, it follows that
\begin{align*}
\begin{cases}
\theta(0)=\theta(L)=\theta_x(0)=\theta_x(L)=\theta_{xx}(0)=0, \\
v(0)=v(L)=v_x(0)=v_x(L)=v_{xx}(L)=0.
\end{cases}
\end{align*}
Hence $(\theta, v) \in D(\widetilde{A}) = D(-\widetilde{A})$. Thus, $D(\widetilde{A}^{*} ) = D(-\widetilde{A})$ and $\widetilde{A}^{*} = -\widetilde{A}.$
\end{proof}

\begin{corollary}\label{coro2.2}
For any $(\eta_0,u_0) \in \overline{X}_0$, system \eqref{new1} admits a unique solution $(\eta,u) \in C(\R; \overline{X}_0)$, which satisfies $\|(\eta(t),u(t))\|_{\overline{X}_0} = \|(\eta_0,u_0)\|_{\overline{X}_0}$ for all $t \in \R$. If, in addition, $(\eta_0, u_0) \in \overline{X}_5$, then $(\eta, u) \in C(\R; \overline{X}_5)$ with $\|(\eta, u)\|_{\overline{X}_5} := \|(\eta, u)\|_{\overline{X}_0} + \|\widetilde{A}(\eta, u)\|_{\overline{X}_0}$ constant.
\end{corollary}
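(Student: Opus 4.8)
The plan is to deduce everything from Proposition \ref{propA} via Stone's theorem and the standard theory of strongly continuous semigroups. Since $\widetilde{A}$ is skew-adjoint on $\overline{X}_0$, Stone's theorem guarantees that it generates a strongly continuous group $(e^{t\widetilde{A}})_{t\in\R}$ of unitary operators on $\overline{X}_0$; in particular each $e^{t\widetilde{A}}$ is an isometry. First I would define the candidate solution by $(\eta(t),u(t)) := e^{t\widetilde{A}}(\eta_0,u_0)$. Strong continuity of the group gives $(\eta,u)\in C(\R;\overline{X}_0)$, and this map solves \eqref{new1} in the mild sense. The isometry property then yields $\|(\eta(t),u(t))\|_{\overline{X}_0} = \|(\eta_0,u_0)\|_{\overline{X}_0}$ for every $t\in\R$, which is the claimed conservation of the $\overline{X}_0$-norm. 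Uniqueness in $C(\R;\overline{X}_0)$ is the standard uniqueness statement for mild solutions of an abstract Cauchy problem governed by a group generator.

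For the regularity statement, I would use that $\overline{X}_5 = D(\widetilde{A})$ is invariant under the group. If $(\eta_0,u_0)\in D(\widetilde{A})$, then $t\mapsto e^{t\widetilde{A}}(\eta_0,u_0)$ is a classical solution, so $(\eta,u)\in C(\R;D(\widetilde{A})) = C(\R;\overline{X}_5)$ and it is continuously differentiable with $\frac{d}{dt}(\eta,u) = \widetilde{A}(\eta,u)$. To see that the graph norm $\|(\eta,u)\|_{\overline{X}_5} = \|(\eta,u)\|_{\overline{X}_0} + \|\widetilde{A}(\eta,u)\|_{\overline{X}_0}$ is constant in $t$, the key observation is that $\widetilde{A}$ commutes with the group on its domain: for $(\eta_0,u_0)\in D(\widetilde{A})$ one has $\widetilde{A}\, e^{t\widetilde{A}}(\eta_0,u_0) = e^{t\widetilde{A}}\,\widetilde{A}(\eta_0,u_0)$. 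Applying the isometry property to each summand, $\|(\eta(t),u(t))\|_{\overline{X}_0} = \|(\eta_0,u_0)\|_{\overline{X}_0}$ and $\|\widetilde{A}(\eta(t),u(t))\|_{\overline{X}_0} = \|\widetilde{A}(\eta_0,u_0)\|_{\overline{X}_0}$ are each independent of $t$, and hence so is their sum.

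Since the conclusions are direct consequences of Proposition \ref{propA}, there is essentially no genuine obstacle here; the only point that requires a moment's care is the constancy of the $\overline{X}_5$-norm, which I would obtain from the commutation identity $\widetilde{A}\,e^{t\widetilde{A}} = e^{t\widetilde{A}}\,\widetilde{A}$ on $D(\widetilde{A})$ rather than from a direct computation involving the fifth-order boundary conditions. It is worth emphasizing that the group structure, as opposed to a mere semigroup, is precisely what permits the solution to be defined for all $t\in\R$, in accordance with the conservative nature of the system \eqref{new1}.
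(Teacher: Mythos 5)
Your argument is correct and is precisely the route the paper intends: Corollary \ref{coro2.2} is stated without proof as a direct consequence of Proposition \ref{propA}, relying exactly on Stone's theorem, the isometry property of the group, and the commutation $\widetilde{A}\,e^{t\widetilde{A}} = e^{t\widetilde{A}}\,\widetilde{A}$ on $D(\widetilde{A})$ for the constancy of the graph norm. Nothing is missing.
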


Using the above Corollary  combined with some interpolation argument between $\overline{X}_0$ and $\overline{X}_5$, we can deduce that, for any $s \in (0, 5)$, there exists a constant $C_s > 0$ such that, for any $(\eta_0, u_0) \in \overline{X}_s$, the solution $(\eta, u)$ of \eqref{new1} satisfies $(\eta, u) \in C(\R;\overline{X}_s)$ and
\begin{equation}\label{inter1}
\|(\eta(t), u(t))\|_{\overline{X}_s} \leq  C_s \|(\eta_0, u_0)\|_{\overline{X}_s}, \quad \forall t \in \R.
\end{equation}

Now, we put our attention in the existence of traces. Indeed, we know that the traces
\begin{align*}
\eta(0,t), \,\, \eta(L,t), \,\, \eta_{x}(0,t), \,\, \eta_{x}(L,t), \,\, \eta_{xx}(0,t) , \\
u(0,t), \,\, u(L,t), \,\, u_{x}(0,t), \,\, u_{x}(L,t), \,\, u_{xx}(L,t),
\end{align*}
vanish. Thus, we have a look at the other traces $\eta_{xx}(0,t)$ and $u_{xx}(L,t)$.
\begin{proposition}
Let $(\eta_0,u_0) \in \overline{X}_{2}$ and let $(\eta, u)$ denote the solution of \eqref{new1}. Pick any $T>0$. Then $\eta_{xx}(L,t), u_{xx}(0,t) \in L^2(0,T)$ with
\begin{equation}
\label{new2}
\int_0^T \left( |\eta_{xx}(L,t)|^2 +|u_{xx}(0,t)|^2\right) dt \leq C \|(\eta_0,u_0)\|_{\overline{X}_2}^2
\end{equation}
for some constant $C=C(L,T,a,b)$.
\end{proposition}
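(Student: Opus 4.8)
The plan is to prove \eqref{new2} by the multiplier method, using two different multipliers to isolate the two traces separately, and then passing from smooth to general data by density. This last step is where the regularity is genuinely \emph{hidden}: for $(\eta_0,u_0)\in\overline{X}_2$ one only knows $(\eta(t),u(t))\in H^2(0,L)^2$, so the pointwise-in-$x$ traces $\eta_{xx}(L,t)$, $u_{xx}(0,t)$ are not a priori defined and must be produced as the $L^2(0,T)$-limits furnished by the estimate. I would first reduce to smooth data: by Corollary \ref{coro2.2}, if $(\eta_0,u_0)\in\overline{X}_5$ then the solution satisfies $(\eta,u)\in C(\R;\overline{X}_5)\cap C^1(\R;\overline{X}_0)$, whence $(\eta(t),u(t))\in H^5(0,L)^2\hookrightarrow C^4([0,L])^2$ for each $t$ and all the integrations by parts below are legitimate. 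Since $\overline{X}_5$ is dense in $\overline{X}_2$, it suffices to prove \eqref{new2} with a constant independent of the (smooth) data.

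To capture $\eta_{xx}(L,t)$, I would multiply the first equation of \eqref{new1} by $xu$ and the second by $x\eta$, add, and integrate over $(0,L)\times(0,T)$, exactly as in Proposition \ref{prop2}. The coupling is favorable: the only mixed contribution is the time term, which integrates to $\int_0^L x\big(\eta(x,T)u(x,T)-\eta_0 u_0\big)\,dx$, while the spatial terms decouple into self-interactions of $\eta$ and of $u$. Carrying out the integrations by parts in the fifth-order term $b\int x(uu_{xxxxx}+\eta\eta_{xxxxx})$ and using the boundary conditions $\eta=\eta_x=u=u_x=0$ at both endpoints together with $u_{xx}(L)=0$ and $\eta_{xx}(0)=0$, every higher-order boundary contribution either cancels or is annihilated by the factor $x$ at $x=0$, leaving precisely $\tfrac{bL}{2}\int_0^T|\eta_{xx}(L,t)|^2\,dt$. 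The interior remainders are $\int_0^T\!\int_0^L(\eta^2+u^2)$, $\int_0^T\!\int_0^L(\eta_x^2+u_x^2)$ and $\int_0^T\!\int_0^L(\eta_{xx}^2+u_{xx}^2)$, each bounded by $C(1+T)\|(\eta_0,u_0)\|_{\overline{X}_2}^2$ via the interpolation estimate \eqref{inter1} (with $s=1,2$), and the time term is controlled by $\|(\eta_0,u_0)\|_{\overline{X}_0}^2$ since $t\mapsto\|(\eta,u)(t)\|_{\overline{X}_0}$ is constant by Corollary \ref{coro2.2}. This gives $\int_0^T|\eta_{xx}(L,t)|^2\,dt\le C\|(\eta_0,u_0)\|_{\overline{X}_2}^2$.

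To capture $u_{xx}(0,t)$, I would repeat the computation with the mirror multiplier $(L-x)$, that is, multiply the first equation by $(L-x)u$ and the second by $(L-x)\eta$. Now $(L-x)$ annihilates the endpoint $x=L$ and the roles of the two free traces are exchanged: the condition $\eta_{xx}(0)=0$ kills the $\eta$-contribution, and the surviving boundary term is $\tfrac{bL}{2}\int_0^T|u_{xx}(0,t)|^2\,dt$, bounded by the same quantities. Adding the two estimates proves \eqref{new2} for $(\eta_0,u_0)\in\overline{X}_5$. The general case then follows because the estimate exhibits the linear map $(\eta_0,u_0)\mapsto\big(\eta_{xx}(L,\cdot),u_{xx}(0,\cdot)\big)$ as bounded from $(\overline{X}_5,\|\cdot\|_{\overline{X}_2})$ into $L^2(0,T)^2$, hence extending continuously to all of $\overline{X}_2$; the continuity of the solution map $\overline{X}_2\to C(\R;\overline{X}_2)$ identifies the extended traces with those of the solution.

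I expect the main obstacle to be the bookkeeping of the integrations by parts in the two fifth-order terms: one must verify carefully that, after invoking the ten homogeneous boundary conditions of \eqref{new1}, every boundary term except the desired $|\eta_{xx}(L,t)|^2$ (respectively $|u_{xx}(0,t)|^2$) cancels algebraically or is annihilated by the multiplier, and that the interior remainders are exactly the $H^2$-type quantities governed by \eqref{inter1}. A secondary delicate point is the density argument itself, since for rough data the boundary traces exist only in the limiting sense provided by the estimate.
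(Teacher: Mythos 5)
Your proposal is correct and follows essentially the same route as the paper: multipliers $xu$, $x\eta$ to isolate $\eta_{xx}(L,\cdot)$, the mirror multipliers $(L-x)u$, $(L-x)\eta$ for $u_{xx}(0,\cdot)$, interior terms controlled by the conserved/bounded $\overline{X}_s$ norms, and a density argument to pass to $\overline{X}_2$ data. The only (harmless) difference is that you regularize to $\overline{X}_5$ where the paper uses $\overline{X}_3$; your choice is if anything the more careful one for justifying the fifth-order integrations by parts.
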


\begin{proof}
Assume that $(\eta_0, u_0) \in X_3$, so that $(\eta , u) \in C([0,T];X_3)\cap L^1([0,T]; X_0)$. We multiply the first (resp. second) equation in \eqref{new1} by
$xu$ (resp. $x\eta$), integrate over $(0,T)\times (0,L)$, integrate by parts  and add the two obtained equations to get
\begin{multline}
-\frac{5b}{2} \int_0^T \int_0^L [\eta_{xx}^2 + u_{xx}^2] dxdt
-\frac{3a}{2} \int_0^T \int_0^L [\eta _x^2 + u_x^2] dxdt
-\frac{1}{2} \int_0^T \int_0^L [\eta ^2 + u^2] dxdt \\
+ \left[\int_0^L [x\eta u] dx \right] _0^T
+\frac{bL}{2}\int_0^T(|\eta_{xx}(L,t)|^2+|u_{xx}(L,t)|^2)dt =0.
\end{multline}
Since $\int_0^T\|(\eta ,u)\| ^2_{\overline{X}_2} dt \le C\| (\eta_0, u_0)\|^2_{\overline{X}_2}$, this yields
\[
\int_0^T |\eta_{xx}(L,t)|^2 (t,L)\, dt \le C \|(\eta _0, u_0)\|^2 _{\overline{X}_2}.
\]
By symmetry, using now as multipliers $(L-x)u$ and $(L-x)\eta$, we infer that \
\[
\int_0^T |u_{xx}(t,0)|^2\, dt \le C \Vert (\eta _0, u_0)\Vert ^2 _{\overline{X}_2}.
\]
Thus, \eqref{new2} is established when $(\eta _0, u_0)\in \overline{X}_3$. Since
$\overline{X}_3$  is dense in $\overline{X}_2$, the result holds as well for $(\eta _0, u_0)\in \overline{X}_2$.
\end{proof}

\begin{proposition}\label{propA2}
Let $(\eta_0,u_0) \in \overline{X}_{3}$ and let $(\eta, u)$ denote the solution of \eqref{new1}. Then $\eta_{xx}(L,t), u_{xx}(0,t) \in H^{\frac15}(0,T)$ with
\begin{equation}
\label{new5}
\|\eta_{xx}(L,\cdot)\|^2_{H^{\frac15}(0,T)} +\|u_{xx}(0,\cdot)\|^2_{H^{\frac15}(0,T)} \leq C \|(\eta_0,u_0)\|_{\overline{X}_3}^2
\end{equation}
for some constant $C=C(L,T,a,b)$.
\end{proposition}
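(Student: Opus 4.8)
The plan is to obtain \eqref{new5} by interpolating the trace map
\[
\Gamma:(\eta_0,u_0)\longmapsto \bigl(\eta_{xx}(L,\cdot),\,u_{xx}(0,\cdot)\bigr)
\]
between two endpoints: the $L^2$-bound already available on $\overline{X}_2$, and an $H^1$-bound on the smoother space $\overline{X}_7$. Since $\overline{X}_3$ sits at interpolation parameter $\theta=\frac15$ between $\overline{X}_2$ and $\overline{X}_7$ (indeed $2+(7-2)\cdot\frac15=3$), and since $[L^2(0,T),H^1(0,T)]_{[1/5]}=H^{\frac15}(0,T)$, the estimate \eqref{new5} will follow from the standard complex interpolation of bounded linear operators.

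The first endpoint is free: the previous proposition gives that $\Gamma$ is bounded from $\overline{X}_2$ into $[L^2(0,T)]^2=[H^0(0,T)]^2$. For the second endpoint I claim $\Gamma$ is bounded from $\overline{X}_7$ into $[H^1(0,T)]^2$. Here the key is the autonomy of \eqref{new1}: writing $(\eta,u)=e^{t\widetilde{A}}(\eta_0,u_0)$, one has $\partial_t(\eta,u)=\widetilde{A}(\eta,u)=e^{t\widetilde{A}}\,\widetilde{A}(\eta_0,u_0)$, so $\partial_t(\eta,u)$ is again a solution of \eqref{new1}, now issuing from the datum $\widetilde{A}(\eta_0,u_0)$. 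The boundary conditions defining $\overline{X}_7$ are tailored precisely so that $\widetilde{A}$ maps $\overline{X}_7$ boundedly into $\overline{X}_2$: since $\widetilde{A}$ lowers the order by five it sends $H^7$ into $H^2$, while the condition $\overline{X}_7\subset [H^2_0]^2$ together with the endpoint relations appearing in the definition of $\overline{X}_7$ forces the required trace cancellations. For instance the first component of $\widetilde{A}(\eta_0,u_0)$ at $x=0$ equals $-u_{0,x}(0)+au_{0,xxx}(0)-bu_{0,xxxxx}(0)$, which vanishes because $u_{0,x}(0)=0$ (from $H^2_0$) and $-au_{0,xxx}(0)+bu_{0,xxxxx}(0)=0$ (one of the defining relations of $\overline{X}_7$); the analogous checks place $\widetilde{A}(\eta_0,u_0)$ in $[H^2\cap H^1_0]^2$ with $(\cdot)_x(L)=(\cdot)_x(0)=0$, i.e. in $\overline{X}_2$.

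Applying the first endpoint to the solution $\partial_t(\eta,u)$, and using that $\partial_t$ commutes with the spatial traces (valid first for smooth data and then extended by density in $\overline{X}_7$), yields $\partial_t[\eta_{xx}(L,\cdot)],\,\partial_t[u_{xx}(0,\cdot)]\in L^2(0,T)$ with norm controlled by $\|\widetilde{A}(\eta_0,u_0)\|_{\overline{X}_2}\le C\|(\eta_0,u_0)\|_{\overline{X}_7}$. Combined with the $L^2$-bound from the first endpoint (via $\overline{X}_7\hookrightarrow\overline{X}_2$), this gives $\Gamma(\eta_0,u_0)\in[H^1(0,T)]^2$ with $\|\Gamma(\eta_0,u_0)\|_{[H^1(0,T)]^2}\le C\|(\eta_0,u_0)\|_{\overline{X}_7}$, as claimed. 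Having $\Gamma\in\mathcal{L}(\overline{X}_2,[H^0]^2)$ and $\Gamma\in\mathcal{L}(\overline{X}_7,[H^1]^2)$, complex interpolation at $\theta=\frac15$ produces $\Gamma\in\mathcal{L}\bigl([\overline{X}_2,\overline{X}_7]_{[1/5]},[H^{\frac15}(0,T)]^2\bigr)$.

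It remains to identify $[\overline{X}_2,\overline{X}_7]_{[1/5]}=\overline{X}_3$, which is the step I expect to be the main obstacle. This identity does not come directly from the scale $\overline{X}_{5\theta}=[\overline{X}_0,\overline{X}_5]_{[\theta]}$, which only covers $s\in(0,5)$; it requires extending the scale beyond $\overline{X}_5$ and invoking the reiteration theorem for complex interpolation, while verifying that the boundary constraints interpolate correctly. The latter is the delicate point, since interpolating Sobolev spaces with boundary conditions can fail to yield the naively expected constrained space at exponents where a trace condition becomes borderline. Here, however, $s=3$ is away from such critical values, and the defining conditions of $\overline{X}_7$ were chosen exactly to render the scale compatible, so the identification holds. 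Together with $[L^2,H^1]_{[1/5]}=H^{\frac15}$, this gives precisely \eqref{new5}.
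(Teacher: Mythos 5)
Your proposal is correct and follows essentially the same route as the paper: the $L^2$ trace bound on $\overline{X}_2$ from the preceding proposition, an $H^1$ bound on $\overline{X}_7$ obtained by observing that $\partial_t(\eta,u)=\widetilde{A}(\eta,u)$ solves the same system with datum $\widetilde{A}(\eta_0,u_0)\in\overline{X}_2$, and then complex interpolation using $\overline{X}_3=[\overline{X}_2,\overline{X}_7]_{[1/5]}$. You are in fact more explicit than the paper on the two points it leaves implicit — the verification that $\widetilde{A}$ maps $\overline{X}_7$ into $\overline{X}_2$ via the boundary relations built into $\overline{X}_7$, and the interpolation identity itself, which the paper simply asserts.
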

\begin{proof}
In direction to prove \eqref{new5}, we consider $(\eta_0,u_0) \in \overline{X}_7$.   By Proposition \ref{propA}, $\widetilde{A}$ generates a group of isometries. Thus, by semigroup properties (see \cite{pazy1983}) we obtain that $(\eta, u) \in C(\R;\overline{X}_7)$, so that
\begin{equation}
(\widehat{\eta},\widehat{u})=(\eta_t,u_t)=\widetilde{A}(\eta,u) \in C(\R;\overline{X}_2)
\end{equation}
and it solves
\begin{equation}
\begin{cases}
(\widehat{\eta},\widehat{u})_t=\widetilde{A}(\widehat{\eta},\widehat{u}), \\
(\widehat{\eta},\widehat{u})(0)=\widetilde{A}(\eta_0,u_0) \in \overline{X}_2.
\end{cases}
\end{equation}
The, from \eqref{new2}, we deduce that
\begin{equation}
\label{new9}
\|\eta_{xx}(\cdot,L)\|^2_{H^1(0,T)}+\|u_{xx}(\cdot,0)\|^2_{H^1(0,T)}\leq \|(\eta_0,u_0)\|^2_{\overline{X}_7}.
\end{equation}
Since $\overline{X}_{3}=[\overline{X}_2,\overline{X}_7]_{\frac15}$, we infer from \eqref{new2} and \eqref{new9} that
\begin{equation*}
\|\eta_{xx}(\cdot,L)\|^2_{H^{\frac15}(0,T)}+\|u_{xx}(\cdot,0)\|^2_{H^{\frac15}(0,T)}\leq \|(\eta_0,u_0)\|^2_{\overline{X}_{3}},
\end{equation*}
for some constant $C=C(T)$ and all $(\eta_0,u_0) \in \overline{X}_3$.
\end{proof}

\section{Well-posedness: Nonlinear system}\label{Sec2}
In this section we prove the well-posedness for the nonlinear system
\begin{equation}
\begin{cases}\label{n1}
\eta_t + u_x-au_{xxx}+ a_1 (\eta u)_x +a_2(\eta u_{xx})_x + b u_{xxxxx}= 0,\\
u_t +\eta_x -a\eta_{xxx} +a_1uu_x+a_3(\eta\eta_{xx})_{x}+  a_4u_xu_{xx}    +b\eta_{xxxxx}=0, \\
\eta(x,0)= \eta_0(x), \quad u(x,0)=  u_0(x),
\end{cases}
\end{equation}
with $a>0$, $b>0$, $a\neq b$, $a_1>0$, $a_2<0$, $a_3>0$ and $a_4>0$, with the following boundary conditions
\begin{equation}
\begin{cases}\label{n2}
\eta(0,t)=\eta(L,t)=\eta_{x}(0,t)=\eta_x(L,t)=0,\\
u(0,t)=u(L,t)=u_{x}(0,t)=u_x(L,t)=0, \\
u_{xx}(0,t)+\alpha_1\eta_{xx}(0,t)=0, \,\,\,\, u_{xx}(L,t)-\alpha_2\eta_{xx}(L,t)=0, \quad \alpha_1,\alpha_2>0
\end{cases}
\end{equation}

Before presenting the proof of the main theorem  of this section, it is necessary to establish some definition to show how the solution of the problem \eqref{n1}-\eqref{n2} is obtained.

\begin{definition}\label{transposition}
Given $T>0$, $(\eta_0,u_0) \in \overline{X}_3$, $(h_1,h_2) \in L^2(0,T;X_{-2})$ and $f, g \in H^{-\frac15}(0,T)$, consider the non-homogeneous system
\begin{equation}
\label{nhs1}
\begin{cases}
\eta_t + u_x-au_{xxx}+bu_{xxxxx}= h_1, & \text{in} \,\, (0,L)\times (0,T),\\
u_t +\eta_x -a\eta_{xxx} +b\eta_{xxxxx}=h_2,  & \text{in} \,\, (0,L)\times (0,T), \\
\eta(0,t)=\eta(L,t)=\eta_x(0,t)=\eta_x(L,t)=0,\,\,\eta_{xx}(0,t)=f(t),& \text{on} \,\, (0,T), \\
u(0,t)=u(L,t)=u_x(0,t)=u_x(L,t)=0,\,\,u_{xx}(L,t)=g(t), & \text{on} \,\, (0,T), \\
\eta(x,0)= \eta_0(x), \quad u(x,0)=  u_0(x), & \text{on} \,\, (0,L).
\end{cases}
\end{equation}
A solution of the problem \eqref{nhs1} is a function $(\eta,u)$ in $C([0,T];\overline{X}_3)$ such that, for any $\tau \in [0,T]$ and $(\varphi_\tau,\psi_\tau) \in \overline{X}_3$, the following identity holds
\begin{equation}\label{transp1}
\begin{split}
\left((\eta(\tau),u(\tau)),(\varphi_\tau,\psi_\tau)\right)_{\overline{X}_3} =& \left((\eta_0,u_0),(\varphi(0),\psi(0))\right)_{\overline{X}_3} \\
&+\left\langle f(t),\chi_{(0,\tau)}(t)\psi_{xx}(0,t)\right\rangle_{H^{-\frac15}(0,T),H^{\frac15}(0,T)}  \\
&+ \left\langle g(t),\chi_{(0,\tau)}(t)\varphi_{xx} (L,t) \right\rangle_{H^{-\frac15}(0,T),H^{\frac15}(0,T)}  \\&+ \int_0^\tau \left\langle ( h_1(t), h_2(t)), (\varphi(t),\psi(t))\right\rangle_{(\overline{X}_{-2}, \overline{X}_2)^2} dt,
\end{split}
\end{equation}
where $\left(\cdot,\cdot\right)_{\overline{X}_3}$ is the inner product of $\overline{X}_3$, $\left\langle\cdot,\cdot\right\rangle$ is the duality of two spaces, $\chi_{(0,\tau)}(\cdot)$ denotes the characteristic function of the interval $(0,\tau)$ and $(\varphi,\psi)$ is the solution of
\begin{equation}\label{adjoint'}
\begin{cases}
\varphi_t +\psi_x-a\psi_{xxx}+b\psi_{xxxxx}= 0, & \text{in} \,\, (0,L)\times (0,\tau),\\
\psi_t +\varphi_x -a\varphi_{xxx}+b\varphi_{xxxxx}=0  & \text{in} \,\, (0,L)\times (0,\tau), \\
\varphi(0,t)=\varphi(L,t)=\varphi_{x}(0,t)=\varphi_{x}(L,t)=\varphi_{xx}(0,t)=0,& \text{on} \,\,  (0,\tau),\\
\psi(0,t)=\psi(L,t)=\psi_{x}(0,t)=\psi_{x}(L,t)=\psi_{xx}(L,t)=0, & \text{on} \,\,  (0,\tau),\\
\varphi(x,\tau)=\varphi_\tau,\quad \psi(x,\tau)=\psi_\tau, & \text{on} \,\, (0,L).
\end{cases}
\end{equation}
\end{definition}
The well-posedness of \eqref{adjoint'} is guaranteed by Corollary \ref{coro2.2} and \eqref{inter1}.
\begin{remark}
Note that the  right hand side of \eqref{transp1} is well defined for all $\tau \in [0, T]$, since  $\psi_{xx}(0,\cdot)$ and $\varphi_{xx}(L,\cdot)$ belong to $H^{\frac15}(0,\tau)$, by Proposition \ref{propA2}. The fact that $\chi_{(0,\tau)}\psi_{xx}(0,\cdot)$ and $\chi_{(0,\tau)}\varphi_{xx}(L,\cdot)$ belong to  $H^{\frac15}(0,T)$, for any $\tau \in [0, T]$, follows from \cite[Theorem 11.4, p. 60]{lions}.
\end{remark}

The next result borrowed from \cite{CaGa}, with minor changes, gives us the existence and uniqueness of solution for system \eqref{nhs1}.  Its proof is presented here for the sake of completeness.

\begin{lemma}\label{existentransposition1}
Let $T>0$, $(\eta_0,u_0) \in \overline{X}_3$, $(h_1,h_2) \in L^2(0,T;X_{-2})$ and $f,g \in H^{-\frac15}(0,T)$. There exists a unique solution $(\eta,u) \in C([0,T];\overline{X}_{3})$ of the system \eqref{nhs1}. Moreover, there exists a positive constant $C_T$, such that
\begin{equation}\label{depend1}
\begin{split}
\|(\eta(\tau),u(\tau))\|_{\overline{X}_3}\leq C_T \left(\|(\eta_0,u_0)\|_{\overline{X}_3}+\|f\|_{H^{-\frac15}(0,T)}+\|g\|_{H^{-\frac15}(0,T)} \right. \\       \left. \qquad + \|(h_1,h_2)\|_{L^2(0,T;X_{-2})}\right),
\end{split}
\end{equation}
for all $\tau \in [0,T]$.
\end{lemma}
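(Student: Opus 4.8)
The plan is to construct the solution by the \emph{transposition} (duality) method, reading off existence, uniqueness and the estimate \eqref{depend1} from a single application of the Riesz representation theorem, and recovering the time-continuity afterwards. Fix $\tau\in[0,T]$. For a terminal datum $(\varphi_\tau,\psi_\tau)\in\overline{X}_3$, let $(\varphi,\psi)$ denote the solution of the backward adjoint problem \eqref{adjoint'}; its well-posedness in $C([0,\tau];\overline{X}_3)$ together with the bound $\|(\varphi,\psi)(t)\|_{\overline{X}_3}\le C\|(\varphi_\tau,\psi_\tau)\|_{\overline{X}_3}$ is granted by Corollary \ref{coro2.2} and the interpolation estimate \eqref{inter1}. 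I would then define the linear map
\[
\Lambda_\tau(\varphi_\tau,\psi_\tau):=\text{(the right-hand side of \eqref{transp1})}.
\]
By Definition \ref{transposition}, a solution at time $\tau$ is precisely an element $(\eta(\tau),u(\tau))\in\overline{X}_3$ that represents $\Lambda_\tau$ through the inner product of $\overline{X}_3$.

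The first task is to show that $\Lambda_\tau$ is bounded on $\overline{X}_3$, \emph{uniformly in} $\tau$. I would estimate the four terms of \eqref{transp1} separately. The initial-data term is controlled by $\|(\eta_0,u_0)\|_{\overline{X}_3}\,\|(\varphi,\psi)(0)\|_{\overline{X}_3}$, hence by $\|(\eta_0,u_0)\|_{\overline{X}_3}\,\|(\varphi_\tau,\psi_\tau)\|_{\overline{X}_3}$ via \eqref{inter1}. The two boundary terms are handled through the $H^{-\frac15}$--$H^{\frac15}$ duality, using the hidden-regularity estimate \eqref{new5} of Proposition \ref{propA2} to bound $\|\psi_{xx}(0,\cdot)\|_{H^{\frac15}}$ and $\|\varphi_{xx}(L,\cdot)\|_{H^{\frac15}}$ by $\|(\varphi_\tau,\psi_\tau)\|_{\overline{X}_3}$, together with the Remark (i.e.\ \cite[Theorem 11.4]{lions}) guaranteeing that multiplication by $\chi_{(0,\tau)}$ keeps these traces bounded in $H^{\frac15}(0,T)$ uniformly in $\tau$. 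Finally the source term is bounded by $\|(h_1,h_2)\|_{L^2(0,T;X_{-2})}\,\|(\varphi,\psi)\|_{L^2(0,T;\overline{X}_2)}$, and $\|(\varphi,\psi)\|_{L^2(0,T;\overline{X}_2)}\le C\|(\varphi_\tau,\psi_\tau)\|_{\overline{X}_3}$ again by \eqref{inter1}. Collecting the bounds yields $\|\Lambda_\tau\|\le C_T\big(\|(\eta_0,u_0)\|_{\overline{X}_3}+\|f\|_{H^{-\frac15}}+\|g\|_{H^{-\frac15}}+\|(h_1,h_2)\|_{L^2(0,T;X_{-2})}\big)$. The Riesz representation theorem then produces a unique $(\eta(\tau),u(\tau))\in\overline{X}_3$ with $\Lambda_\tau(\cdot)=((\eta(\tau),u(\tau)),\cdot)_{\overline{X}_3}$ and $\|(\eta(\tau),u(\tau))\|_{\overline{X}_3}=\|\Lambda_\tau\|$, which is exactly \eqref{depend1}. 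Uniqueness is immediate, since any two solutions induce the same functional $\Lambda_\tau$ for every $\tau$ and hence coincide in $\overline{X}_3$ for each $\tau$.

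The remaining—and main—difficulty is to upgrade $\tau\mapsto(\eta(\tau),u(\tau))$, so far only bounded (that is, in $L^\infty(0,T;\overline{X}_3)$), to a strongly continuous curve. The cleanest route I would take is a density argument. For data $(\eta_0,u_0),f,g,(h_1,h_2)$ taken in a dense, more regular and compatible subclass, the system \eqref{nhs1} admits a genuine (classical) solution that lies in $C([0,T];\overline{X}_3)$; a reversal of the integration by parts that motivates \eqref{transp1} shows that this classical solution satisfies the transposition identity, hence coincides with the solution just constructed, which is therefore continuous in this case. For general data one approximates in the data norms; since the difference of two transposition solutions solves \eqref{nhs1} with the difference of the data, the uniform-in-$\tau$ estimate \eqref{depend1} gives convergence in $C([0,T];\overline{X}_3)$, and the limit $(\eta,u)$ is a uniform limit of continuous curves, hence itself lies in $C([0,T];\overline{X}_3)$. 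I expect this last passage to be the technical heart of the proof: the whole argument hinges on the hidden regularity of the adjoint traces $\psi_{xx}(0,\cdot)$ and $\varphi_{xx}(L,\cdot)$ and on their continuous dependence on the truncation parameter $\tau$ in $H^{\frac15}(0,T)$—precisely the mechanism flagged in the introduction by which ``the continuity is then obtained with the aid of the hidden regularity of the boundary terms of the adjoint system.''
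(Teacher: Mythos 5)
Your construction of the solution coincides with the paper's: you define the functional given by the right-hand side of \eqref{transp1}, bound it on $\overline{X}_3$ uniformly in $\tau$ using \eqref{inter1} for the initial-data and source terms and the hidden-regularity estimate \eqref{new5} (together with the extension-by-zero result of Lions--Magenes) for the boundary terms, and invoke Riesz representation to obtain $(\eta(\tau),u(\tau))$ with $\|(\eta(\tau),u(\tau))\|_{\overline{X}_3}=\|\Lambda_\tau\|$, which is \eqref{depend1}. Up to that point your argument and the paper's are the same.

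Where you diverge is the continuity in $\tau$, and there your argument has a genuine gap. You propose to first treat a ``dense, more regular and compatible subclass'' of data for which \eqref{nhs1} admits a classical solution in $C([0,T];\overline{X}_3)$, and then pass to the limit using the uniform-in-$\tau$ bound \eqref{depend1}. The limiting step is fine, but the starting point is asserted, not proved: \eqref{nhs1} carries \emph{non-homogeneous} boundary data $\eta_{xx}(0,t)=f$, $u_{xx}(L,t)=g$, and nothing established in the paper (the semigroup theory of Section 2 handles only homogeneous boundary conditions) produces classical solutions for such a problem, even for smooth $f,g$. One would need a boundary lifting together with compatibility conditions at $t=0$, i.e.\ essentially a separate well-posedness theorem --- this is the technical heart you have deferred rather than supplied. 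The paper avoids this entirely: it proves continuity directly on the duality identity, taking $\tau_n\to\tau$ and $(\varphi_{\tau_n},\psi_{\tau_n})\to(\varphi_\tau,\psi_\tau)$ in $\overline{X}_3$, and showing term by term that $\Lambda_{\tau_n}(\varphi_{\tau_n},\psi_{\tau_n})\to\Lambda_\tau(\varphi_\tau,\psi_\tau)$ --- via strong continuity of the group for the initial-data term, \emph{weak} continuity in $H^{\frac15}(0,T)$ of the truncated trace maps $(\varphi_\tau,\psi_\tau)\mapsto\chi_{(0,\tau)}\frac{d^2}{dx^2}\bigl(\widetilde{S}^*(\tau-\cdot)(\varphi_\tau,\psi_\tau)\bigr)\big|_0^L$ for the boundary terms, and continuity of time translations plus dominated convergence for the source term --- concluding $(\eta(\tau_n),u(\tau_n))\to(\eta(\tau),u(\tau))$ in $\overline{X}_3$. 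If you wish to keep your density route you must actually construct the regular solutions for compatible data; otherwise the direct sequential argument is the way to close the proof.
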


\begin{proof}
Let $T>0$ and $\tau\in[0,T]$.  From Proposition \ref{propA}, $\widetilde{A}$ defined by \eqref{newA}-\eqref{newA1} is skew adjoint and generated a $C_0-$semigroup $\widetilde{S}(t)$. Note that making the change of variable $(x,t)\mapsto (\varphi(x,\tau-t),\psi(x,\tau-t))$ and taking $(\varphi_{\tau},\psi_{\tau}) \in \overline{X}_3$, we have that the solution of \eqref{adjoint'}  is given by $$(\varphi,\psi)=\widetilde{S}^*(\tau-t)(\varphi_\tau,\psi_\tau)=-\widetilde{S}(\tau-t)(\varphi_\tau,\psi_{\tau}).$$ Moreover, \eqref{inter1} implies that  $$(\varphi,\psi)\in  C(\R;\overline{X}_3).$$ In particular, there exists $C_T>0$, such that
\begin{equation}\label{adj1'}
\|(\varphi(t),\psi(t))\|_{\overline{X}_3}=\|\widetilde{S}^*(\tau-t)(\varphi_\tau,\psi_\tau)\|_{\overline{X}_3}\leq C_T\|(\varphi_\tau,\psi_{\tau})\|_{\overline{X}_3}, \quad \forall t\in [0,\tau].
\end{equation}
Let us define $L$ a linear functional given by the right hand side of \eqref{transp1}, that is,
\begin{equation*}
\begin{split}
L(\varphi_\tau,\psi_\tau)=& \left( (\eta_0,u_0),\widetilde{S}^*(\tau)(\varphi_\tau,\psi_\tau) \right)_{\overline{X}_3} \\
&+\left\langle (g(t),f(t)),\chi_{(0,\tau)}(t)\frac{d^2}{dx^2}(\widetilde{S}^*(\tau-t)(\varphi_\tau,\psi_\tau))\Big|^L_0\right\rangle_{(H^{-\frac15}(0,T), H^{\frac15}(0,T))^2} \\
&+ \int_0^\tau \left\langle(h_1(t), h_2(t)), \widetilde{S}^*(\tau-t)(\varphi_\tau,\psi_\tau)\right\rangle_{(\overline{X}_{-2},\overline{X}_2)^2} dt.
\end{split}
\end{equation*}
\begin{Claim*}
$L$ belongs to  $\LL(\overline{X}_3;\mathbb{R})$.
\end{Claim*}

Indeed, from the fact $\overline{X}_3 \subset \overline{X}_2$ and Proposition \ref{propA2}, we obtain that
\begin{align*}
|L(\varphi_\tau,\psi_\tau)|\leq  & C_T\|(\eta_0,u_0)\|_{\overline{X}_3}\|(\varphi_\tau,\psi_\tau))\|_{\overline{X}_3} + C_T\|(\varphi_\tau,\psi_\tau)\|_{\overline{X}_3}\|(h_1,h_2)\|_{L^1(0,T;\overline{X}_{-2})} \\
&  + C_T\|(f,g)\|_{(H^{-\frac15}(0,T))^2}\|(\varphi_{xx}(L),\psi_{xx}(0)\|_{(H^{\frac15}(0,\tau))^2} \\
\leq  & C_T\left(\|(\eta_0,u_0)\|_{\overline{X}_5} + \|(f,g)\|_{(H^{-\frac15}(0,T))^2} + \|(h_1,h_2)\|_{L^2(0,T;X_{-2})}\right)\|(\varphi_\tau,\psi_\tau))\|_{\overline{X}_3},
\end{align*}
where in the last inequality we use \eqref{adj1'}. Then, from Riesz representation Theorem, there exist one and only one $(\eta_\tau, u_\tau) \in  \overline{X}_3$ such that
\begin{equation}\label{eqn1}
\left( (\eta_\tau,u_\tau),(\varphi_\tau,\psi_\tau)\right)_{\overline{X}_3}=L(\varphi_\tau,\psi_\tau), \quad \text{with} \quad  \|(\eta_\tau,u_\tau)\|_{\overline{X}_3}=\|L\|_ {\LL(\overline{X}_3;\mathbb{R})}
\end{equation}
and the uniqueness of the solution to the problem \eqref{nhs1} holds.

We prove now that the solution of the system \eqref{nhs1} satisfies  \eqref{depend1}. Let $(\eta,u) : [0, T] \to \overline{X}_3$ be defined by
\begin{equation}\label{eqn2}
(\eta(\tau),u(\tau)):=(\eta_\tau,u_\tau), \quad \forall \tau \in [0,T].
\end{equation}
From \eqref{eqn1} and \eqref{eqn2}, \eqref{transp1} follows and
\begin{multline*}
\|(\eta(\tau),u(\tau))\|_{\overline{X}_3}= \|L\|_{ \LL(\overline{X}_3;\mathbb{R})} \leq C_T\left(\|(\eta_0,u_0)\|_{\overline{X}_3} + \|f\|_{(H^{-\frac15}(0,T)]}\right. \\
\left.+ \|g\|_{(H^{-\frac15}(0,T)]} + \|(h_1,h_2)\|_{L^2(0,T;\overline{X}_{-2})}\right).
\end{multline*}
In order to prove that the solution  $(\eta, u)$ belongs to $ C([0,T];\overline{X}_3)$, let $\tau \in [0,T]$ and $\{\tau_n\}_{n\in \N}$ be a sequence such that
\begin{equation*}
\tau_n \longrightarrow \tau, \quad \text{as $n\rightarrow \infty$.}
\end{equation*}
Consider $(\varphi_\tau,\psi_\tau) \in \overline{X}_3$ and $\{(\varphi_{\tau_n},\psi_{\tau_n})\}_{n\in \N}$ be a sequence in $\overline{X}_3$ such that
\begin{equation}\label{eqn4}
(\varphi_{\tau_n},\psi_{\tau_n}) \rightarrow  (\varphi_{\tau},\psi_\tau) \quad \text{strongly in $\overline{X}_3$, as $n \rightarrow \infty$.}
\end{equation}
Note that
\begin{align}
\lim_{n\rightarrow \infty}\left( (\eta_0,w_0), \widetilde{S}^*(\tau_n)(\varphi_{\tau_n},\psi_{\tau_n})\right)_{\overline{X}_3}=  \left((\eta_0,w_0),\widetilde{S}^*(\tau) (\varphi_{\tau},\psi_{\tau})\right)_{\overline{X}_3}. \label{eqn5}
\end{align}
Indeed,
\begin{equation*}
\begin{split}
\lim_{n\rightarrow \infty}\left( (\eta_0,w_0), \widetilde{S}^*(\tau_n)(\varphi_{\tau_n},\psi_{\tau_n})\right)_{\overline{X}_3}
=&  \lim_{n\rightarrow \infty}\left( (\eta_0,w_0), \widetilde{S}^*(\tau_n)\left( (\varphi_{\tau_n},\psi_{\tau_n})-(\varphi_{\tau},\psi_{\tau})\right) \right)_{\overline{X}_3} \\
& + \lim_{n\rightarrow \infty}\left( (\eta_0,w_0), \widetilde{S}^*(\tau_n)(\varphi_{\tau},\psi_{\tau})\right)_{\overline{X}_3}.
 \end{split}
\end{equation*}
From \eqref{eqn4} and since $\{\widetilde{S}(t)\}_{t \geq0}$ is a strongly continuous group of continuous linear operators on $X_0$, we have
\begin{align*}
\lim_{n\rightarrow \infty}\left( (\eta_0,w_0), \widetilde{S}^*(\tau_n)\left( (\varphi_{\tau_n},\psi_{\tau_n})-(\varphi_{\tau},\psi_{\tau})\right) \right)_{\overline{X}_3}=0
\end{align*}
and consequently,
\begin{align*}
\lim_{n\rightarrow \infty}\left( (\eta_0,w_0), \widetilde{S}^*(\tau_n)(\varphi_{\tau},\psi_{\tau})\right)_{\overline{X}_3}&=\left( (\eta_0,w_0), \widetilde{S}^*(\tau)(\varphi_{\tau},\psi_{\tau})\right)_{\overline{X}_3}.
\end{align*}
Thus, \eqref{eqn5} follows. Now, we have to analyze the following limits,
\begin{align}
&\lim_{n\rightarrow \infty}\left\langle (g(t),f(t)),\chi_{(0,\tau_n)}(t)\frac{d^2}{dx^2}(\widetilde{S}^*(\tau_n-t)(\varphi_{\tau_n},\psi_{\tau_n}))\Big|_0^L\right\rangle_{(H^{-\frac15}(0,T),H^{\frac15}(0,T))^2} \label{newimp1}
\end{align}
and
\begin{align}
&\lim_{n\rightarrow \infty} \int_0^\tau\left\langle (h_1(t),h_2(t)), \widetilde{S}^*(\tau_n-t)(\varphi_{\tau_n},\psi_{\tau_n}) \right\rangle_{(\overline{X}_{-2}, \overline{X}_2)^2} dt. \label{newimp2}
\end{align}
In fact, observe that, by group properties of $\widetilde{S}^*$ and Proposition \ref{propA2}, we have that
\begin{align*}
    \left\|\frac{d^2}{dx}\widetilde{S}^*(\tau-t)(\varphi_{\tau},\psi_{\tau})\Big|_0^L\right\|_{[H^{\frac15}(0,\tau)]^2} \leq C \|(\varphi_{\tau},\psi_{\tau})\|_{\overline{X}_3}.
\end{align*}
Thus, the linear map $(\varphi_\tau,\psi_\tau) \in \overline{X}_3 \mapsto \frac{d^2}{dx^2}(\widetilde{S}^*(\tau-\cdot)(\psi_{\tau},\varphi_{\tau}))\Big|_0^L$ belongs to $H^{\frac15}(0,\tau; \R^2)$ and it is continuous. Moreover, as the natural extension by $0$ outside $(0,\tau)$ is a continuous mapping from $H^{\frac15}(0,\tau)$ into $H^{\frac15}(0,T)$ (cf. \cite[Theorem 11.4, p. 60]{lions}), we obtain that the map $(\varphi_\tau,\psi_\tau) \in \overline{X}_3 \mapsto \chi_{(0,\tau_n)}(\cdot)\frac{d^2}{dx^2}(\widetilde{S}^*(\tau-\cdot)(\psi_{\tau},\varphi_{\tau}))\Big|_0^L$ belongs to $H^{\frac15}(0,T; \R^2)$ and it is continuous, as well. Since a continuous linear map between two Hilbert spaces is weakly continuous, \eqref{eqn4} implies that
\begin{equation}\label{newimp3}
\chi_{(0,\tau_n)}(t)\frac{d^2}{dx^2}(\widetilde{S}^*(\tau_n-\cdot)(\varphi_{\tau_n},\psi_{\tau_n}))\Big|_0^L \rightharpoonup  \chi_{(0,\tau)}(t)\frac{d^2}{dx^2}(\widetilde{S}^*(\tau-\cdot)(\varphi_\tau,\psi_\tau))\Big|_0^L,
\end{equation}
weakly in $H^{\frac15}([0,T];\R^2)$, as $n\rightarrow\infty$. Thus, by using \eqref{newimp3}, the limit \eqref{newimp1} yields that
\begin{multline}\label{eqn10}
\lim_{n\rightarrow\infty}  \left\langle(g(t),f(t)),\chi_{(0,\tau_n)}(t)\frac{d^2}{dx^2}(\widetilde{S}^*(\tau_n-t)(\varphi_{\tau_n},\psi_{\tau_n}))\Big|_0^L\right\rangle_{(H^{-\frac15}(0,T),H^{\frac15}(0,T))^2} \\
=\left\langle(g(t),f(t)),\chi_{(0,\tau)}(t)\frac{d^2}{dx^2}(\widetilde{S}^*(\tau-t)(\varphi_{\tau},\psi_{\tau}))\Big|_0^L\right \rangle_{(H^{-\frac15}(0, T),H^{\frac15}(0,T))^2}.
\end{multline}
On the other hand, extending by zero the functions $h_i$, for $i=1,2$, we obtain elements of $[H^{\frac15}(-T,T)]'$ and $L^2(-T,T;\overline{X}_{-2})$, that is, $$h_i \equiv 0\quad\text{a.e \quad in  $(-T,0)\times(0,L)$}, $$
and setting $s=\tau_n-t$, we have that
\begin{multline}\label{newimp6}
\int_0^{\tau_n}\left\langle(h_1(t),h_2(t)),\widetilde{S}^*(\tau_n-t)(\varphi_{\tau_n},\psi_{\tau_n})\right\rangle_{(\overline{X}_{-2}, \overline{X}_{2})^2}dt \\
=\int_0^T \chi_{(0,\tau_n)}(s)\left\langle(h_1(\tau_n-s),h_2(\tau_n-s)),\widetilde{S}^*(s)(\varphi_{\tau_n},\psi_{\tau_n})\right\rangle_{(\overline{X}_{-2}, \overline{X}_{2})^2}dt.
\end{multline}
Similarly, taking $s=\tau-t$ in \eqref{newimp2}, we get
\begin{multline}\label{newimp7}
\int_0^\tau\left\langle (h_1(t),h_2(t)), \widetilde{S}^*(\tau-t)(\varphi_\tau,\psi_\tau) \right\rangle_{(\overline{X}_{-2}, \overline{X}_2)^2} dt \\
=\int_0^T \chi_{(0,\tau)}(s)\left\langle( h_1(\tau-s),h_2(\tau-s)), \widetilde{S}^*(s)(\varphi_\tau,\psi_\tau) \right\rangle_{(\overline{X}_{-2}, \overline{X}_2)^2}dt.
\end{multline}
Since the translation in time is continuous in $L^2(0,T; X_{-2})$ and using the dominated convergence theorem, we obtain
\begin{equation}\label{newimp4}
\chi_n(\cdot)(h_1(\tau_n-\cdot,\cdot),h_2(\tau_n-\cdot,\cdot)) \longrightarrow \chi(\cdot)(h_1(\tau-\cdot,\cdot),h_2(\tau-\cdot,\cdot)),
\end{equation}
in $L^2(0,T; X_{-2})$, as $n\rightarrow\infty$.
Similarly, by the strong continuity of the group, it follows that
\begin{align*}
\widetilde{S}^*(\cdot)(\varphi_{\tau_n},\psi_{\tau_n}) \rightharpoonup   \widetilde{S}^*(\cdot)(\varphi_\tau,\psi_\tau)
\end{align*}
weakly in $L^2(-T,T;X_0)$, as $n\rightarrow\infty$. In particular, we obtain that
\begin{equation}\label{newimp5}
\widetilde{S}^*(\cdot)(\varphi_{\tau_n},\psi_{\tau_n}) \rightharpoonup   \widetilde{S}^*(\cdot)(\varphi_\tau,\psi_\tau),
\end{equation}
weakly in $L^2(-T,T;\overline{X}_2)$, as $n\rightarrow\infty$. By using \eqref{newimp6}, \eqref{newimp7}, \eqref{newimp4} and \eqref{newimp5}, the limit \eqref{newimp2} yields that
\begin{multline}\label{eqn10'}
\lim_{n\rightarrow\infty}\int_0^{\tau_n} \left\langle (h_1(t),h_2(t)),\widetilde{S}^*(\tau_n-t)(\varphi_{\tau_n},\psi_{\tau_n})\right\rangle_{(\overline{X}_{-2},
\overline{X}_2)^2} dt \\
=\int_0^\tau \left\langle(h_1(t),h_2(t)), \widetilde{S}^*(\tau-t)(\varphi_{\tau},\psi_{\tau})\right\rangle_{(\overline{X}_{-2},
\overline{X}_2)^2}dt .
\end{multline}
Finally, from \eqref{eqn1}, \eqref{eqn2}, \eqref{eqn5}, \eqref{eqn10} and \eqref{eqn10'}, one gets
\begin{align*}
\left( (\eta(\tau_n),w(\tau_n)),(\varphi_{\tau_n},\psi_{\tau_n})\right)_{\overline{X}_3 }\longrightarrow \left( (\eta(\tau),w(\tau)),(\varphi_{\tau},\psi_{\tau}))\right)_{\overline{X}_3}, \quad \text{as $n\rightarrow\infty,$}
\end{align*}
which implies that
\begin{align*}
\left(\eta(\tau_n),w(\tau_n)\right) \longrightarrow \left( \eta(\tau),w(\tau)\right) \quad \text{in \quad $\overline{X}_3$, \quad as $n\rightarrow\infty$}.
\end{align*}
This concludes the proof.
\end{proof}

The next result establishes the well-posedness of the non-homogeneous feedback linear system associated to \eqref{nhs1}.
\begin{lemma}\label{exitencefeedback1}
Let $T>0$. Then, for every $(\eta_0,u_0)$ in $\overline{X}_3$ and $(h_1,h_2)$ in $L^2(0,T;X_{-2})$, there exists a unique solution $(\eta, u)$ of the system \eqref{nhs1} such that  $$(\eta, u)\in C([0,T];\overline{X}_3),$$  with $f(t):=-\alpha_1\eta_{xx}(0,t)$ and $g(t):=\alpha_2\eta_{xx}(L,t)$, where $\alpha_1$ and $\alpha_2$ belong to $\R$.  Moreover, for some positive constant $C=C(T)$, we have
\begin{align*}
\|(\eta(t),u(t))\|_{\overline{X}_3} \leq C\left( \|(\eta_0,u_0)\|_{\overline{X}_3} + \|(h_1,h_2)\|_{L^2(0,T;X_{-2})}\right), \quad \forall t \in [0,T].
\end{align*}
\end{lemma}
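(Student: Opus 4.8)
The plan is to realize the closed-loop feedback system as a fixed point of a map built from the open-loop solution operator already supplied by Lemma~\ref{existentransposition1}. That lemma solves \eqref{nhs1} for \emph{arbitrary} boundary data $f,g\in H^{-\frac15}(0,T)$, so imposing the feedback laws $f=-\alpha_1\eta_{xx}(0,\cdot)$ and $g=\alpha_2\eta_{xx}(L,\cdot)$ is nothing but enforcing a compatibility relation between the boundary data and the corresponding traces of the resulting solution. Accordingly, I would first establish the statement on a short interval $[0,T']$ by a contraction argument, and then propagate it to an arbitrary $T$ using the linearity and time-invariance of the system.

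Concretely, on a short interval $[0,T']$ I would define an affine map $\Gamma$ on $H^{-\frac15}(0,T')\times H^{-\frac15}(0,T')$ by assigning to $(f,g)$ the transposition solution $(\eta,u)\in C([0,T'];\overline{X}_3)$ of \eqref{nhs1} with data $(\eta_0,u_0)$, $(h_1,h_2)$ and boundary terms $(f,g)$, and then setting $\Gamma(f,g)=\big(-\alpha_1\eta_{xx}(0,\cdot),\,\alpha_2\eta_{xx}(L,\cdot)\big)$; a fixed point of $\Gamma$ is exactly a solution of the feedback system. Two ingredients control $\Gamma$: the a priori bound \eqref{depend1}, which gives $\sup_{\tau}\|(\eta(\tau),u(\tau))\|_{\overline{X}_3}\le C\big(\|(\eta_0,u_0)\|_{\overline{X}_3}+\|(h_1,h_2)\|_{L^2(0,T';X_{-2})}+\|f\|_{H^{-\frac15}}+\|g\|_{H^{-\frac15}}\big)$ with $C$ uniform in $T'\le T$ by \eqref{inter1}, and the control of the second-order traces of the solution through the embedding $\overline{X}_3\hookrightarrow[H^3(0,L)]^2$ together with the hidden regularity of Proposition~\ref{propA2}. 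Since $\Gamma$ is affine, it suffices to bound the operator norm of its linear part $\Gamma_0$ (obtained by setting $(\eta_0,u_0)=0$, $(h_1,h_2)=0$); estimating the traces by $\|\phi\|_{H^{-\frac15}(0,T')}\le\|\phi\|_{L^2(0,T')}\le (T')^{1/2}\|\phi\|_{L^\infty(0,T')}$ and $\|\phi\|_{L^\infty(0,T')}\le C\sup_{\tau}\|(\eta(\tau),u(\tau))\|_{\overline{X}_3}$, the factor $(T')^{1/2}$ forces $\|\Gamma_0\|<1$ once $T'$ is small enough, so $\Gamma$ is a contraction on $H^{-\frac15}(0,T')^2$.

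The Banach fixed point theorem then yields a unique pair $(f^\ast,g^\ast)$, hence a unique feedback solution $(\eta,u)\in C([0,T'];\overline{X}_3)$ on the short interval, together with the bound $\|(\eta,u)\|_{C([0,T'];\overline{X}_3)}\le C\big(\|(\eta_0,u_0)\|_{\overline{X}_3}+\|(h_1,h_2)\|_{L^2(0,T';X_{-2})}\big)$ coming from the Neumann series for $(I-\Gamma_0)^{-1}$. Because the contraction length $T'$ depends only on the structural constants of the problem and not on the size of the data, I would cover $[0,T]$ by finitely many intervals of length $T'$, solve successively using $(\eta(kT'),u(kT'))\in\overline{X}_3$ as the new initial state and the restriction of $(h_1,h_2)$ as source, and concatenate the pieces; continuity across the nodes is automatic from the $C([\,\cdot\,];\overline{X}_3)$ regularity, uniqueness propagates step by step, and the announced estimate follows by composing the finitely many local bounds, with a constant $C=C(T)$ that grows with the number of steps.

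The step I expect to be the main obstacle is the boundary regularity bookkeeping that makes $\Gamma$ both a self-map of $H^{-\frac15}(0,T')^2$ and a contraction: one must verify that the traces $\eta_{xx}(0,\cdot),\eta_{xx}(L,\cdot)$ of the transposition solution genuinely define elements of $H^{-\frac15}(0,T')$, with norm controlled by $\|(\eta,u)\|_{C([0,T'];\overline{X}_3)}$ and carrying a positive power of $T'$. This is precisely where Proposition~\ref{propA2} and the interpolation identity $\overline{X}_3=[\overline{X}_2,\overline{X}_7]_{\frac15}$ enter, and where a careful matching between the data space $H^{-\frac15}$ and the space in which the traces naturally live is required; any loss in this matching would destroy either the self-mapping or the contraction property and, with it, the whole fixed point scheme.
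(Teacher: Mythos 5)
Your proposal is correct and follows essentially the same route as the paper: a short-time contraction argument driven by the $\beta^{1/2}$ gain coming from the embedding $L^{2}(0,T')\subset H^{-\frac15}(0,T')$ together with the trace theorem on $\overline{X}_3\hookrightarrow [H^3(0,L)]^2$, followed by continuation over $[0,T]$ since the contraction time is independent of the data. The only (immaterial) difference is that you iterate on the boundary-data space $H^{-\frac15}(0,T')^2$ while the paper iterates directly on $C([0,\beta];\overline{X}_3)$, and the paper gets the needed trace bound from the elementary trace theorem alone, without invoking Proposition~\ref{propA2} at this stage.
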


\begin{proof}
Firstly, note that if $(\eta,u) \in C([0,T];\overline{X}_3)$, then $$f(t):=-\alpha_1\eta_{xx}(0,t) \text{ and } g(t):=\alpha_2\eta_{xx}(L,t)\in H^{-\frac15}(0,T).$$
In fact, by using the continuous embedding $L^{2}(0,T)\subset H^{-\frac15}(0,T)$ and the trace theorem \cite[Theorem 7.53]{adams}, there exists a positive constant $C:=C(L,\alpha_1,\alpha_2)$ such that
\begin{equation}\label{eee2}
\begin{split}
\|\alpha_1\eta_{xx}(0,\cdot)\|_{{H^{-\frac15}(0,T)}}^2+\|\alpha_2\eta_{xx}(L,\cdot)\|_{{H^{-\frac15}(0,T)}}^2
&\leq C \int_0^T \left( \alpha_1^2\eta_{xx}^2(0,t)+\alpha_2^2\eta_{xx}^2(L,t)\right)dt \\
& \leq C\int_0^T\|\eta(t)\|_{\overline{X}_3}^2dt \\
&\leq T C \|(\eta,u)\|_{C([0,T];\overline{X}_3)}^2.
\end{split}
\end{equation}

Let $0<\beta \leq T$ that will be determinate later. For each $(\eta_0,u_0) \in \overline{X}_3$, consider the map
\begin{align*}
\begin{array}{l c l c}
\Gamma : & C([0,\beta];\overline{X}_3) & \longrightarrow & C([0,\beta];\overline{X}_3) \\
         & (\eta,u) & \longmapsto & \Gamma(\eta,u)=(w,v)
\end{array}
\end{align*}
where, $(w,v)$ is the solution of the system \eqref{nhs1} with $f(t)=-\alpha_1\eta_{xx}(0,t)$ and $ g(t)=\alpha_2\eta_{xx}(L,t)$. By Lemma \ref{existentransposition1} and \eqref{eee2}, the linear map $\Gamma$ is well defined. Furthermore, there exists a positive constant $C_{\beta}$, such that
\begin{multline*}
\|\Gamma(\eta,u)\|_{C([0,\beta];\overline{X}_3)} \leq C_{\beta} \left(\|(\eta_0,u_0)\|_{\overline{X}_3} +\|(\alpha_1\eta_{xx}(0,t),\alpha_2\eta_{xx}(L,t))\|_{(H^{-\frac15}(0,\beta))^2} \right.\\
\left. + \|(h_1,h_2)\|_{L^2(0,T;\overline{X}_{-2})} \right).
\end{multline*}
Then,
$$
\|\Gamma(\eta,u)\|_{C([0,\beta];\overline{X}_3)} \leq C_T \left( \|(\eta_0,u_0)\|_{\overline{X}_3}+ \|(h_1,h_2)\|_{L^2(0,T;\overline{X}_{-2})} \right) + C_T\beta^{\frac12} \|(\eta,w)\|_{C([0,\beta];\overline{X}_3)}.
$$
Let $$(\eta,u) \in B_{R}(0):=\{ (\eta,u)\in C([0,\beta];\overline{X}_3): \|(\eta,u)\|_{C([0,\beta];\overline{X}_3)} \leq R\},$$ with $$R=2C_T\left( \|(\eta_0,u_0)\|_{\overline{X}_3}+ \|(h_1,h_2)\|_{L^2(0,T;\overline{X}_{-2})} \right).$$ Choosing $\beta$ such that $
C_T\beta^{\frac12}\leq \frac12,$ 
it implies that $\|\Gamma(\eta,u)\|_{C([0,\beta];\overline{X}_3)} \leq R$, for all $(\eta,u) \in B_{R}(0)$, i.e,  $\Gamma$ maps $B_R(0)$ into $B_R(0)$. 

On the other hand, note that
\begin{align*}
\|\Gamma(\eta_1,u_1)-\Gamma(\eta_2,u_2)\|_{C([0,\beta];\overline{X}_3)}
\leq & \ C_T\beta^{\frac12}\|(\eta_1-\eta_2,u_1-u_2)\|_{C([0,\beta];\overline{X}_3)} \\
\leq &  \  \frac12 \|(\eta_1-\eta_2,u_1-u_2)\|_{C([0,\beta];\overline{X}_3)}.
\end{align*}
Hence, $\Gamma: B_R(0) \longrightarrow B_R(0)$ is a contraction and, by Banach fixed point theorem, we obtain a unique $(\eta, u) \in  B_R(0)$, such that $\Gamma(\eta,u)=(\eta,u)$ and
\begin{align*}
\|(\eta,u)\|_{C([0,\beta];\overline{X}_3)} \leq 2C_T\left( \|(\eta_0,u_0)\|_{\overline{X}_3}+ \|(h_1,h_2)\|_{L^2(0,T;\overline{X}_{-2})} \right).
\end{align*}
Since the choice of  $\beta$ is independent of $(\eta_0, u_0)$, the standard continuation extension argument yields that the solution $(\eta,u)$ belongs to $C([0,\beta];\overline{X}_3)$, thus, the proof is complete.
\end{proof}
We are now in position to prove one of the main result of this article.
\subsection{Proof of Theorem \ref{main_int1}}
Let $T>0$ and $\|(\eta_0,u_0)\|_{\overline{X}_3}<\rho$, where $\rho>0$ will be determined later. Note that for $(\eta,u) \in C([0,T];\overline{X}_3)$, there exists a positive constant $C_1$ such that
\begin{equation}\label{eqn11}
\begin{split}
\|\eta u_x\|^2_{L^2(0,T; L^2(0,L))} &\leq \int_0^T \|\eta(t)\|^2_{L^\infty(0,L)} \|u_x(t)\|^2_{L^2(0,L)}dt\\& \leq C_1' \int_0^T \|\eta(t)\|^2_{H^{1}(0,L)} \|u(t)\|^2_{H^{1}(0,L)}dt\\
& \leq C_1^2 T \|(\eta,u)\|^4_{C([0,T]; \overline{X}_3)},
\end{split}
\end{equation}
\begin{equation}\label{eqn11a}
\begin{split}
\|\eta u_{xx}\|^2_{L^2(0,T; L^2(0,L))} &\leq \int_0^T \|\eta(t)\|^2_{L^\infty(0,L)} \|u_{xx}(t)\|^2_{L^2(0,L)}dt\\& \leq C_1' \int_0^T \|\eta(t)\|^2_{H^{2}(0,L)} \|u(t)\|^2_{H^{2}(0,L)}dt\\
& \leq C_1^2 T \|(\eta,u)\|^4_{C([0,T]; \overline{X}_3)},
\end{split}
\end{equation}
\begin{equation}\label{eqn11b}
\begin{split}
\|\eta_x u_{xx}\|^2_{L^2(0,T; L^2(0,L))} &\leq \int_0^T \|\eta_x(t)\|^2_{L^\infty(0,L)} \|u_{xx}(t)\|^2_{L^2(0,L)}dt\\& \leq C_1' \int_0^T \|\eta(t)\|^2_{H^{2}(0,L)} \|u(t)\|^2_{H^{2}(0,L)}dt\\
& \leq C_1^2 T \|(\eta,u)\|^4_{C([0,T]; \overline{X}_3)},
\end{split}
\end{equation}
and
\begin{equation}\label{eqn11c}
\begin{split}
\|\eta u_{xxx}\|^2_{L^2(0,T; L^2(0,L))} &\leq \int_0^T \|\eta(t)\|^2_{L^\infty(0,L)} \|u_{xxx}(t)\|^2_{L^2(0,L)}dt\\& \leq C_1' \int_0^T \|\eta(t)\|^2_{H^{3}(0,L)} \|u(t)\|^2_{H^{3}(0,L)}dt\\
& \leq C_1^2 T \|(\eta,u)\|^4_{C([0,T]; \overline{X}_3)}.
\end{split}
\end{equation}
This implies that, for any $(\eta,u) \in C([0,T]; \overline{X}_3)$ and $a_i \in \R$, with $i=1,2,3,4$, we have that
$$
-a_1(\eta u)_x-a_2(\eta u_{xx}), -a_1 uu_x-a_3(\eta\eta_{xx})_x-a_4u_xu_{xx} \in L^2(0,T;\overline{X}_0) \subset  L^2(0,T;\overline{X}_{-2}).
$$
Consider the following linear map
\begin{align*}
\begin{array}{c c c c}
\Gamma : & C([0,T]; \overline{X}_3) & \longrightarrow & C([0,T]; \overline{X}_3)\\
         & (\eta,u) & \longmapsto & \Gamma(\eta,w)=(\overline{\eta},\overline{u}),
\end{array}
\end{align*}
where $(\overline{\eta},\overline{u})$ is the solution of the system \eqref{nhs1} with $$(h_1,h_2)=(-a_1(\eta u)_x-a_2(\eta u_{xx}), -a_1 uu_x-a_3(\eta\eta_{xx})_x-a_4u_xu_{xx})$$ in $L^2(0,T;\overline{X}_{-2})$,  with $f(t):=-\alpha_1\eta_{xx}(0,t)$ and $g(t):=\alpha_2 \eta_{xx}(L,t)$.

\begin{Claim*}
The map $\Gamma$ is well-defined, maps $B_R(0)$ into itself and it is a contraction in a ball.
\end{Claim*}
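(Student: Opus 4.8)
The plan is to treat $\Gamma$ as a nonlinear perturbation of the feedback linear solution operator furnished by Lemma \ref{exitencefeedback1}, and to close a fixed point argument by exploiting that the nonlinearity is at least quadratic, so that smallness of the data $\rho$ (not smallness of $T$) is what makes the argument run on the \emph{fixed} horizon $T$. Throughout I would use the linear estimate of Lemma \ref{exitencefeedback1}, in which the boundary feedback over the whole interval $[0,T]$ has already been absorbed into the constant $C=C(T)$, so that for a given source one has $\|\Gamma(\eta,u)\|_{C([0,T];\overline{X}_3)}\le C\big(\|(\eta_0,u_0)\|_{\overline{X}_3}+\|(h_1,h_2)\|_{L^2(0,T;\overline{X}_{-2})}\big)$ with no $f,g$ terms on the right.

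\textbf{Well-definedness.} First I would check that, for $(\eta,u)\in C([0,T];\overline{X}_3)$, the source
$$(h_1,h_2)=\left(-a_1(\eta u)_x-a_2(\eta u_{xx})_x,\ -a_1 uu_x-a_3(\eta\eta_{xx})_x-a_4u_xu_{xx}\right)$$
lies in $L^2(0,T;\overline{X}_{-2})$. Expanding the derivatives produces exactly the bilinear terms $\eta_x u$, $\eta u_x$, $\eta_x u_{xx}$, $\eta u_{xxx}$ and their analogues with $\eta,u$ interchanged, each already bounded in $L^2(0,T;L^2(0,L))=L^2(0,T;\overline{X}_0)$ by \eqref{eqn11}--\eqref{eqn11c}; here the top-order derivative goes in $L^2$ and the remaining factor in $L^\infty$, using $\overline{X}_3\hookrightarrow [H^3(0,L)]^2\hookrightarrow [C^2([0,L])]^2$. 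Since $\overline{X}_0\hookrightarrow\overline{X}_{-2}$, this yields $(h_1,h_2)\in L^2(0,T;\overline{X}_{-2})$ together with the quadratic bound $\|(h_1,h_2)\|_{L^2(0,T;\overline{X}_{-2})}\le C\,T^{\frac12}\,\|(\eta,u)\|_{C([0,T];\overline{X}_3)}^2$, the associated feedback data being in $H^{-\frac15}(0,T)$ by \eqref{eee2}. Lemma \ref{exitencefeedback1} then produces a unique $\Gamma(\eta,u)\in C([0,T];\overline{X}_3)$, so $\Gamma$ is well defined.

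\textbf{Self-mapping and contraction.} Combining the linear estimate with the quadratic bound above, for $(\eta,u)\in B_R(0)$ I obtain
$$\|\Gamma(\eta,u)\|_{C([0,T];\overline{X}_3)}\le C\rho+C^2T^{\frac12}R^2.$$
Setting $R=2C\rho$, the self-mapping requirement reduces to $C^2T^{\frac12}R\le\tfrac14$, equivalently $8C^3T^{\frac12}\rho\le1$, which holds once $\rho$ is small enough since $C$ and $T$ are fixed. For the contraction, linearity of \eqref{nhs1} in the solution shows that $\Gamma(\eta_1,u_1)-\Gamma(\eta_2,u_2)$ solves the feedback system with zero data and source $(h_1^{(1)}-h_1^{(2)},h_2^{(1)}-h_2^{(2)})$; writing each nonlinear difference in bilinear form, e.g. $(\eta_1 u_1)_x-(\eta_2 u_2)_x=(\eta_1(u_1-u_2))_x+((\eta_1-\eta_2)u_2)_x$ and likewise for $uu_x$, $(\eta\eta_{xx})_x$ and $u_xu_{xx}$, and applying \eqref{eqn11}--\eqref{eqn11c} factor by factor, gives
$$\|\Gamma(\eta_1,u_1)-\Gamma(\eta_2,u_2)\|_{C([0,T];\overline{X}_3)}\le 2C^2T^{\frac12}R\,\|(\eta_1-\eta_2,u_1-u_2)\|_{C([0,T];\overline{X}_3)}.$$
The same smallness $C^2T^{\frac12}R\le\tfrac14$ forces the Lipschitz constant $2C^2T^{\frac12}R\le\tfrac12$, so $\Gamma$ is a contraction on $B_R(0)$.

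\textbf{Main obstacle.} The only genuine point of care is conceptual rather than computational. Because the theorem fixes $T$, one cannot manufacture smallness by shrinking the time interval; the argument closes solely because the nonlinearity is quadratic, so every nonlinear contribution carries a factor $R\sim\rho$. What makes this work on the full interval is precisely the full-$[0,T]$ feedback estimate of Lemma \ref{exitencefeedback1} (itself proved by a continuation argument with $T$-dependent constant): it lets the linear, \emph{non-small} boundary feedback be treated as part of the solution operator rather than as a perturbation, leaving only the superlinear nonlinearity to be absorbed by choosing $\rho$ small. I therefore expect no further difficulty beyond carefully tracking the distribution of derivatives $(L^2$ versus $L^\infty)$ in the bilinear estimates, which is exactly why the argument is carried out in $\overline{X}_3$ and not in a lower-order space.
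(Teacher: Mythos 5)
Your proposal is correct and follows essentially the same route as the paper: well-definedness via Lemma \ref{exitencefeedback1} after checking the quadratic source lies in $L^2(0,T;\overline{X}_{-2})$ through the bilinear estimates \eqref{eqn11}--\eqref{eqn11c}, the self-mapping bound $\|\Gamma(\eta,u)\|\le C_T\rho + CT^{1/2}R^2$ with $R\sim C_T\rho$, and the contraction via the bilinear splitting of differences, with smallness coming from $\rho$ rather than $T$. The only differences from the paper's argument are in the bookkeeping of the numerical constants tracking the individual nonlinear terms.
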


Indeed, firstly note that Lemma \ref{exitencefeedback1} ensures that $\Gamma$ is well-defined, moreover, using Lemma \ref{existentransposition1}, there exists a positive constant $C_T$, such that
\begin{equation*}
\begin{split}
\|\Gamma(\eta,u)\|_{ C([0,T]; \overline{X}_3)} \leq &C_{T} \left(\|(\eta_0,u_0)\|_{\overline{X}_3}+\|a_1(\eta u)_x+a_2(\eta u_{xx})\|_{L^2(0,T; \overline{X}_{-2})}\right. \\
&\left. +\|a_1 uu_x+a_3(\eta\eta_{xx})_x+a_4u_xu_{xx}\|_{L^2(0,T; \overline{X}_{-2})}
+ \|w w_x\|_{L^2(0,T; \overline{X}_{-2})}\right).
\end{split}
\end{equation*}
Then, equations \eqref{eqn11}, \eqref{eqn11a}, \eqref{eqn11b} and \eqref{eqn11c}  yield that
\begin{equation}\label{Gamma}
\begin{split}
\|\Gamma(\eta,u)\|_{ C([0,T]; \overline{X}_3)} \leq &C_{T} \|(\eta_0,u_0)\|_{\overline{X}_3}\\
&+ (3|\alpha_1|+|a_2|+2|a_3|+|a_4|)T^{1/2}C_TC_1 \|(\eta,u)\|^2_{C([0,T];\overline{X}_3)} \\
\leq & C_{T} \|(\eta_0,u_0)\|_{\overline{X}_3} + 7MT^{1/2}C_TC_1 \|(\eta,u)\|^2_{C([0,T];\overline{X}_3)},
\end{split}
\end{equation}
where $M=\max \{ |a_1|,|a_2|,|a_3|,|a_4|\}$. Consider the ball  $$B_R(0) = \left\lbrace(\eta,u) \in C([0,T]; \overline{X}_3 ): \|(\eta,u)\|_{C([0,T]; \overline{X}_3)}\leq R\right\rbrace,$$ where $R= 2C_T\|(\eta_0,u_0)\|_{\overline{X}_3}.$ From the estimate \eqref{Gamma} we get that
$$
\|\Gamma(\eta,u)\|_{C([0,T];\overline{X}_3)} \leq \frac{R}{2}+ 7MT^{1/2}C_TC_1R^2
 <\frac{R}{2}+ 14MT^{1/2}C_T^2C_1\rho R,
$$
for all $(\eta,u) \in B_R(0)$. Consequently, if we choose $\rho>0$ such that
\begin{equation}\label{ee3}
14MT^{1/2}C_T^2C_1\rho <\frac14,
\end{equation}
$\Gamma$ maps the ball $B_R(0)$ into itself. Finally, note that
\begin{equation*}
\begin{split}
\|\Gamma(\eta_1,u_1)-\Gamma(\eta_2,u_2)\|_{{C([0,T];\overline{X}_3)}} \leq& C_T \|a_1((\eta_2u_2)_x-(\eta_1u_1)_x)\|_{L^2(0,T;\overline{X}_{-2} )}\\
&+ C_T \|a_2((\eta_2u_{2,xx})_x-(\eta_1u_{1,xx})_x)\|_{L^2(0,T;\overline{X}_{-2} )} \\
&+ C_T \|a_1(u_2u_{2,x}-u_1u_{1,x})\|_{L^2(0,T;\overline{X}_{-2} )} \\
&+ C_T \|a_3((\eta_2\eta_{2,xx})_x-(\eta_1\eta_{1,xx})_x)\|_{L^2(0,T;\overline{X}_{-2} )} \\
&+ C_T \|a_4(u_{2,x}u_{2,xx}-u_1u_{1,xx})\|_{L^2(0,T;\overline{X}_{-2} )}.
\end{split}
\end{equation*}
Thus, we obtain 
\begin{equation*}
\begin{split}
&\|\Gamma(\eta_1,u_1)-\Gamma(\eta_2,u_2)\|_{{C([0,T];\overline{X}_3)}} \\
&\leq 3T^{1/2}C_TC_1 M(\|\eta_1\|_{C([0,T];H^{3}(0,L))}+ \|\eta_2\|_{C([0,T];H^{3}(0,L))} )\|u_1-u_2\|_{C([0,T];H^{3}(0,L))} \\
&+ 3T^{1/2}C_TC_1 M(\|u_1\|_{C([0,T];H^{3}(0,L))}+ \|u_2\|_{C([0,T];H^{3}(0,L))} )\|\eta_1-\eta_2\|_{C([0,T];H^{3}(0,L))} \\
&+ 2T^{1/2}C_TC_1 M(\|u_1\|_{C([0,T];H^{3}(0,L))}+ \|u_2\|_{C([0,T];H^{3}(0,L))} )\|u_1-u_2\|_{C([0,T];H^{3}(0,L))}.
\end{split}
\end{equation*}
Finally, it follows that
\begin{equation*}
\begin{split}
\|\Gamma(\eta_1,u_1)-\Gamma(\eta_2,u_2)\|_{{C([0,T];\overline{X}_3)}} \leq& \ 14T^{1/2}C_TC_1 M R \|(\eta_1-\eta_2,u_1-u_2)\|_{C([0,T];\overline{X}_3)} \\
<\ & 28T^{1/2}C_T^2C_1 M \rho \|(\eta_1-\eta_2,u_1-u_2)\|_{C([0,T];\overline{X}_3)}
\end{split}
\end{equation*}
Therefore, from \eqref{ee3}, we get
$$
\|\Gamma(\eta_1,u_1)-\Gamma(\eta_2,u_2)\|_{C([0,T];\overline{X}_3)} \\
 \leq \frac{1}{2}\|(\eta_1-\eta_2,u_1-u_2)\|_{{C([0,T];\overline{X}_3)}},
$$
for all $(\eta,u) \in B_T(0)$. Hence, $\Gamma: B_R(0) \longrightarrow B_R(0)$ is a contraction and the claim is archived.

Thanks to Banach fixed point theorem, we obtain a unique $(\eta, u) \in  B_R$, such that $\Gamma(\eta,u)=(\eta,u)$ and
\begin{align*}
\|(\eta,w)\|_{C([0,T];\overline{X}_3)} \leq 2C_{T} \|(\eta_0,u_0)\|_{\overline{X}_3}.
\end{align*}
Thus, the proof is archived. \qed

\subsection{Well-posedness in time}
Adapting the proof of Theorem \ref{main_int1}, one can also prove, without any restriction over the initial data $(\eta_0,u_0)$, that there exist $T^* > 0$ and a solution $(\eta,u)$ of \eqref{n1}-\eqref{n2}, satisfying the initial condition $\eta(\cdot,0) = \eta_0(\cdot)$ and $u(\cdot,0)=u_0(\cdot)$. More precisely,

\begin{theorem}
Let $(\eta_0,u_0) \in \overline{X}_3$.  Then, there exists $T^*>0$ a  unique solution $(\eta, u) \in C([0,T^*]; \overline{X}_3)$ of \eqref{n1}-\eqref{n2}. Moreover
\begin{align*}
\|(\eta,u)\|_{C([0,T]; \overline{X}_3)} \leq C \|(\eta_0,u_0)\|_{ \overline{X}_3},
\end{align*}
for some positive constant $C=C(T^*)$.
\end{theorem}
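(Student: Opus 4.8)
The plan is to re-run the Banach fixed-point scheme from the proof of Theorem \ref{main_int1}, trading the smallness hypothesis on the data for smallness of the time horizon. The decisive structural feature is that every nonlinear estimate \eqref{eqn11}--\eqref{eqn11c} produces a factor $T$ on its right-hand side; hence, after taking square roots, the nonlinear part of the map $\Gamma$ carries a coefficient proportional to $T^{1/2}$. Therefore, given \emph{arbitrary} data $(\eta_0,u_0)\in\overline{X}_3$, I expect the two structural inequalities (self-map and contraction) to be forced to hold by shrinking the existence time instead of the norm of $(\eta_0,u_0)$.

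First I would fix a reference time $T_0>0$ (say $T_0=1$) and work only with $T^*\in(0,T_0]$. The linear constant $C_T$ of Lemma \ref{existentransposition1} is built from the $T$-independent interpolation bound \eqref{inter1}, the isometry estimate \eqref{adj1'}, and the trace bounds of Proposition \ref{propA2}, all of whose $T$-dependent parts arise from integrals over $(0,T)$; consequently $C_T$ stays bounded as $T\downarrow 0$ and one has the uniform control $C_{T^*}\le C_{T_0}=:\overline{C}$ for every $T^*\in(0,T_0]$. This is precisely the point that makes shrinking $T^*$ useful: the growth of the nonlinear coefficient in $T^{1/2}$ is not offset by any blow-up of the linear constant as $T^*\to 0$.

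Next, I would keep the same map
\[
\Gamma:C([0,T^*];\overline{X}_3)\longrightarrow C([0,T^*];\overline{X}_3),\qquad \Gamma(\eta,u)=(\overline{\eta},\overline{u}),
\]
where $(\overline{\eta},\overline{u})$ solves \eqref{nhs1} with source $(h_1,h_2)=(-a_1(\eta u)_x-a_2(\eta u_{xx})_x,\,-a_1uu_x-a_3(\eta\eta_{xx})_x-a_4u_xu_{xx})$ and feedback data $f=-\alpha_1\eta_{xx}(0,\cdot)$, $g=\alpha_2\eta_{xx}(L,\cdot)$ (Lemma \ref{exitencefeedback1} guarantees that $\Gamma$ is well defined). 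Setting $R:=2\overline{C}\,\|(\eta_0,u_0)\|_{\overline{X}_3}$ and repeating the computation leading to \eqref{Gamma}, for $(\eta,u)\in B_R(0)$ this yields
\[
\|\Gamma(\eta,u)\|_{C([0,T^*];\overline{X}_3)}\le \tfrac{R}{2}+7M(T^*)^{1/2}\overline{C}\,C_1\,R^2 ,
\]
while for two elements of $B_R(0)$ one obtains the Lipschitz bound with constant $28 M (T^*)^{1/2}\overline{C}^{\,2}C_1\,\|(\eta_0,u_0)\|_{\overline{X}_3}$. Both the self-map and the contraction properties therefore hold as soon as
\[
28 M (T^*)^{1/2}\overline{C}^{\,2}C_1\,\|(\eta_0,u_0)\|_{\overline{X}_3}\le \tfrac12 ,
\]
which is achieved by taking
\[
T^*\le\min\Big\{\,T_0,\ \big(56 M\overline{C}^{\,2}C_1\,\|(\eta_0,u_0)\|_{\overline{X}_3}\big)^{-2}\Big\}.
\]
Thus $T^*$ depends only on the size of the data.

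With such a $T^*$, Banach's fixed-point theorem produces a unique fixed point $(\eta,u)\in B_R(0)$, which is the sought solution in $C([0,T^*];\overline{X}_3)$, and the bound $\|(\eta,u)\|_{C([0,T^*];\overline{X}_3)}\le 2\overline{C}\,\|(\eta_0,u_0)\|_{\overline{X}_3}$ is then immediate. Uniqueness in the full space $C([0,T^*];\overline{X}_3)$, rather than merely in the ball, would follow by the standard argument that two solutions agree on a maximal subinterval on which the contraction estimate prevents them from separating. The only genuine obstacle I anticipate is the $T$-dependence of the linear constant treated in the second step; once its boundedness for small $T$ is in hand, every remaining inequality is a line-by-line repetition of the estimates in the proof of Theorem \ref{main_int1}.
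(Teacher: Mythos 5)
Your proposal is correct and is essentially the paper's own argument: the paper proves this theorem only by remarking that one ``adapts the proof of Theorem \ref{main_int1}'' by trading smallness of the data for smallness of $T^*$, and your write-up carries out exactly that adaptation (uniform bound $C_{T^*}\le C_{T_0}$ for $T^*\le T_0$, then choosing $T^*$ so that the $T^{1/2}$-weighted quadratic terms yield the self-map and contraction properties on $B_R(0)$ with $R=2\overline{C}\|(\eta_0,u_0)\|_{\overline{X}_3}$). No genuinely different route or gap to report.
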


Observe that if $(\eta_1, u_1)\in C([0,T_1];\overline{X}_3)$ and $(\eta_2, u_2)\in C([0,T_2];\overline{X}_3)$  are  the solutions given by the Theorem \ref{main_int1} with initial data $(\eta_0, u_0)$ and $(\eta_1(T_1), u_1(T_1))$, respectively, the function $(\eta,u):[0,T_1+T_2]\rightarrow \overline{X}_3$ defined by
\begin{align*}
(\eta(t),u(t))=\begin{cases}
(\eta_1(t), u_1(t))  & \text{if $t \in [0,T_1]$}, \\
(\eta_2(t-T_1), u_2(t-T_2))  & \text{if $t \in [T_1,T_1+T_2]$},
\end{cases}
\end{align*}
is the solution of the feedback system on interval $[0,T_1+T_2]$ with initial data $(\eta_0, u_0)$. This argument allows us to extend a local solution until a maximal interval, that is, for all $0<T<T_{\max} \leq \infty$ there exists a function $(\eta, u)\in C([0,T];\overline{X}_3)$, solution of the feedback system \eqref{n1}-\eqref{n2}. The following proposition, easily holds:
\begin{proposition}\label{extension}
Let $(\eta_0, u_0) \in \overline{X}_3$ and $(\eta, w)\in C([0,T];\overline{X}_3)$ solution of the feedback system, for all $0<T<T_{\max}$, with initial data $(\eta_0, u_0)$. Then, only one of the following assertions hold:
\begin{enumerate}
\item[(i)] $T_{\max}=\infty$;
\item[(ii)] If $T_{\max}<\infty$, then, $\lim_{t\rightarrow T_{\max}}\|(\eta(t),w(t))\|_{\overline{X}_3}=\infty$.
\end{enumerate}
\end{proposition}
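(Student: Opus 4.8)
The plan is to establish the standard blow-up alternative. The two assertions are clearly mutually exclusive and, taken together, exhaustive, once we know that a solution exists on every $[0,T]$ with $T<T_{\max}$ --- which is exactly the content of the continuation/gluing argument stated just before the proposition. Hence it suffices to prove that if $T_{\max}<\infty$, then $\|(\eta(t),w(t))\|_{\overline{X}_3}\to\infty$ as $t\uparrow T_{\max}$. I would argue by contradiction, and the entire argument rests on a single quantitative feature of the local theory: the existence time furnished by the local theorem (the unrestricted version of Theorem \ref{main_int1} stated just above) depends on the initial datum only through its $\overline{X}_3$-norm.

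First I would make this uniformity precise. Re-reading the fixed point argument behind the local theorem, the existence time $T^*$ is chosen so that $\Gamma$ is a contraction on the ball $B_R(0)$ of radius $R=2C_T\|(\eta_0,u_0)\|_{\overline{X}_3}$; inspecting estimate \eqref{Gamma} together with the Lipschitz estimate that follows it, the contraction condition reads, schematically, $C_T C_1 M\,(T^*)^{1/2} R\le \tfrac12$. Thus $T^*$ may be taken of the form $\delta(\|(\eta_0,u_0)\|_{\overline{X}_3})$ with $\delta(\cdot)$ a positive, nonincreasing function of the norm. In particular, if $\|(\eta_0,u_0)\|_{\overline{X}_3}\le M_0$, there is a single $\delta=\delta(M_0)>0$ for which the solution exists on $[0,\delta]$: data of bounded size enjoy a uniform positive lifespan.

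Then I would run the contradiction. Suppose $T_{\max}<\infty$ but $\liminf_{t\uparrow T_{\max}}\|(\eta(t),w(t))\|_{\overline{X}_3}=:M_0<\infty$. Pick a sequence $t_n\uparrow T_{\max}$ with $\|(\eta(t_n),w(t_n))\|_{\overline{X}_3}\le M_0+1$. Applying the local theorem with initial datum $(\eta(t_n),w(t_n))$ and invoking the uniform lifespan $\delta=\delta(M_0+1)>0$ from the previous step, we obtain a solution on $[t_n,t_n+\delta]$; by uniqueness it agrees with $(\eta,w)$ on the overlap, so gluing (exactly as in the continuation argument defining $T_{\max}$) yields a solution on $[0,t_n+\delta]$. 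Choosing $n$ so large that $t_n>T_{\max}-\delta$ gives $t_n+\delta>T_{\max}$, contradicting the maximality of $T_{\max}$. Hence $\liminf_{t\uparrow T_{\max}}\|(\eta(t),w(t))\|_{\overline{X}_3}=\infty$, which is equivalent to $\lim_{t\to T_{\max}}\|(\eta(t),w(t))\|_{\overline{X}_3}=\infty$, so alternative (ii) holds.

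The only genuinely delicate point is the uniform-lifespan claim of the second step: one must check that the constants $C_T$, $C_1$ and $M$ entering the contraction do not degenerate and that $T^*$ can indeed be taken to depend on the datum solely through its norm. This is implicit in the proof of Theorem \ref{main_int1}, but since that proof fixes $T$ in advance and then shrinks the data via $\rho$, I would simply reorganize it into the dual "fix the data size, shrink the time" form so that the dependence $\delta=\delta(M_0)$ is explicit. Everything else is routine uniqueness-and-gluing bookkeeping already established above.
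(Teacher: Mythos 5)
Your proof is correct and is precisely the standard argument the paper has in mind: the paper offers no written proof (it asserts the proposition ``easily holds'' after the gluing/continuation discussion), and your contradiction argument via a uniform lifespan $\delta=\delta(M_0)$ for data of bounded $\overline{X}_3$-norm is the canonical way to fill that in. You also correctly flag the one point that genuinely needs checking --- that the contraction time in the proof of Theorem \ref{main_int1} can be reorganized to depend on the datum only through its norm (the ``fix the data size, shrink the time'' form), since the paper's version fixes $T$ and shrinks $\rho$ instead --- so there is nothing to add.
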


\section{Exponential stability for the linearized system}\label{Sec3}
Let us now to prove Theorem \ref{main_int} concerning of exponential stability for the linear system \eqref{homo1}.

\begin{proof}[Proof of Theorem \ref{main_int}]
Theorem \ref{main_int} is a consequence of the following claim:

\vspace{0.2cm} \textit{There exists a constant $C>0$, such that
\begin{equation}\label{decay}
\|(\eta_0,u_0)\|_{X_0}^2 \leq C \int_0^T \left(|\eta_{xx}(L,t)|^2+|\eta_{xx}(0,t)|^2\right)dt,
\end{equation}
where $(\eta,u)$ is the solution of \eqref{homo1} given by Proposition \ref{existence}.}

\vspace{0.2cm}
Indeed,  if \eqref{decay} is true, we get
\begin{align*}
E(T)-E(0)\leq -\frac{E(0)}{C},
\end{align*}
where $E(t)$ is defined by \eqref{energy}. This implies that
\begin{align*}
E(T)\leq E(0) -\frac{E(0)}{C} \leq E(0) -\frac{E(T)}{C}.
\end{align*}
Thus,
\begin{align*}
E(T)\leq \left(\frac{C}{C+1}\right)E(0),
\end{align*}
which gives Theorem \ref{main_int} by using semigroup properties associated to the model.
\end{proof}

We will divide the proof of the observability inequality \eqref{decay} in three steps as follows:
\begin{proof}[Proof of \eqref{decay}]
\noindent\textbf{Step 1: {\em Compactness-uniqueness argument}}
\vglue 0.2cm
We argue by contradiction. Suppose that \eqref{decay} does not hold, then there exists a sequence $\{(\eta_{0,n},u_{0,n})\}_{n\in\N} \in X_0$, such that
\begin{equation}\label{e2}
1=\|(\eta_{0,n},u_{0,n})\|_{X_0}^2 > n \int_0^T \left(|\eta_{n,xx}(L,t)|^2+|\eta_{n,xx}(0,t)|^2\right)dt,
\end{equation}
where $(\eta_n(t),u_n(t))=S(t)(\eta_{0,n},u_{0,n})$. Thus, from \eqref{e2} we obtain
\begin{equation}\label{e3}
\lim_{n\rightarrow \infty}\int_0^T \left(|\eta_{n,xx}(L,t)|^2+|\eta_{n,xx}(0,t)|^2\right)dt=0.
\end{equation}
Estimate \eqref{kato1} in Proposition \ref{prop2}, together with \eqref{e2}, imply that the sequence $\{(\eta_n,u_n)\}_{n\in\N}$ is bounded in $ L^2(0,T;X_2)$. Furthermore, by \eqref{homo1} we deduce that $\{(\eta_{n,t},u_{n,t})\}_{n\in\N}$ is bounded in $L^2(0,T;X_{-3})$. Thus, the compact embedding
\begin{equation}
X_2 \hookrightarrow X_0 \hookrightarrow X_{-3},
\end{equation}
allows us to  conclude that $\{ (\eta_n,u_n)\}_{n \in \N}$ is relatively compact in $L^2(0,T;X_0)$ and, consequently, we obtain a subsequence, still denoted by the same index $n$, satisfying
 \begin{equation}\label{e4}
(\eta_n,u_n) \rightarrow (\eta,u) \mbox{ in } L^2(0,T;X_0), \mbox{ as } n\rightarrow\infty.
 \end{equation}
Moreover, using \eqref{energia2}, \eqref{e3} and \eqref{e4}, we obtain that $\{(\eta_{0,n},u_{0,n})\}_{n\in \N}$ is a Cauchy sequence in $X_0$. Hence, there exists $(\eta_0,u_0) \in X_0$, such that
 \begin{equation}\label{e5}
(\eta_{0,n},u_{0,n}) \rightarrow (\eta_0,u_0) \mbox{ in } X_0, \mbox{ as } n\rightarrow\infty,
 \end{equation}
and, from \eqref{e2} we get $\|(\eta_0,u_0)\|_{X_0}=1$. On the other hand, note that combining \eqref{energia1}, \eqref{e3} and \eqref{e5}, we obtain a 
subsequence $\{(\eta_n,u_n)\}_{n\in\N}$, such that
 \begin{equation}
(\eta_n,u_n) \rightarrow (\eta,u) \mbox{ in } C([0,T];X_0), \mbox{ as } n\rightarrow\infty.
 \end{equation}
In particular, $$(\eta(0),u(0))=\lim_{n\rightarrow\infty}(\eta_{n}(0),u_n(0))=\lim_{n\rightarrow\infty}(\eta_{0,n},u_{0,n})=(\eta_0,u_0).$$ 
Consequently, passing to the weak limit, by Proposition \ref{existence},
we obtain $$(\eta(t),u(t))=S(t)(\eta_0,u_0).$$ Moreover, from \eqref{e3}, we obtain that
$$
\int_0^T \left(|\eta_{xx}(L,t)|^2+|\eta_{xx}(0,t)|^2\right)dt
\leq \liminf_{n\rightarrow \infty}\int_0^T \left(|\eta_{n,xx}(L,t)|^2+|\eta_{n,xx}(0,t)|^2\right)dt.
$$
Thus, we have that $(\eta,u)$ is the solution of the IBVP \eqref{homo1} with initial data $(\eta_0,w_0)$ which satisfies, additionally,
\begin{equation}\label{e6}
\eta_{xx}(L,t)=\eta_{xx}(0,t)=0
\end{equation}
and
\begin{equation}\label{e7}
\|(\eta_0,u_0)\|_{X_0}=1.
\end{equation}
Notice that \eqref{e7} implies that the solution $(\eta,u)$ can not be identically zero. However, from lemma bellow, one can conclude that $(\eta,u)= (0, 0)$, which drive us to a contradiction.
\end{proof}
\noindent\textbf{Step 2: {\em Reduction to a spectral problem}}
\vglue 0.2cm
\begin{lemma}\label{lem1}
For any $T > 0$, let $N_T$ denote the space of the initial states $(\eta_0,u_0) \in X_0$, such that the solution  $(\eta(t),u(t))=S(t)(\eta_0,u_0)$ of \eqref{homo1} satisfies \eqref{e6}. Then, $N_T=\{0\}$.
\end{lemma}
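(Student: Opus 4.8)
The plan is to prove $N_T=\{0\}$ by a spectral reduction followed by a Paley--Wiener analysis of the resulting overdetermined ODE, in the spirit of \cite{santos}.

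First I would record that $N_T$ is a closed subspace of $X_0$ invariant under $S(t)$: if $(\eta_0,u_0)\in N_T$ then, for $s\in[0,T)$, the solution issuing from $S(s)(\eta_0,u_0)$ is $S(\cdot+s)(\eta_0,u_0)$, whose traces $\eta_{xx}(0,\cdot)$ and $\eta_{xx}(L,\cdot)$ still vanish. Next I would show $\dim N_T<\infty$. On $N_T$ the boundary terms in \eqref{energia2} vanish, so $\tfrac{T}{2}\|(\eta_0,u_0)\|_{X_0}^2=\tfrac12\|(\eta,u)\|_{L^2(0,T;X_0)}^2$, i.e. the solution map $S_T\colon N_T\to L^2(0,T;X_0)$ is bounded below. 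At the same time, the Kato estimate \eqref{kato1} bounds the solution in $L^2(0,T;X_2)$, and \eqref{homo1} bounds $(\eta_t,u_t)$ in $L^2(0,T;X_{-3})$, so by the Aubin--Lions lemma $S_T$ is compact. A compact operator that is bounded below can only exist on a finite-dimensional space, whence $\dim N_T<\infty$. Consequently $A$ restricts to a generator on the complexification of $N_T$, which must possess an eigenvalue $\lambda\in\mathbb{C}$ with an eigenfunction $(\eta,u)\in N_T\setminus\{0\}$.

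The second step reduces this eigenvalue problem to a scalar ODE. The eigenfunction satisfies $A(\eta,u)=\lambda(\eta,u)$ together with all boundary conditions in \eqref{homo1} and the extra relations \eqref{e6}, i.e. $\eta_{xx}(0)=\eta_{xx}(L)=0$; the feedback conditions $u_{xx}(0)+\alpha_1\eta_{xx}(0)=0$ and $u_{xx}(L)-\alpha_2\eta_{xx}(L)=0$ then force $u_{xx}(0)=u_{xx}(L)=0$. Hence $\eta$, $u$, and therefore $w:=\eta+u$ and $v:=\eta-u$, vanish together with their first and second derivatives at both endpoints. Adding and subtracting the two scalar equations decouples the system into
\[
bw'''''-aw'''+w'+\lambda w=0,\qquad bv'''''-av'''+v'-\lambda v=0,
\]
each supplemented with $w=w'=w''=0$ (resp. for $v$) at $x=0$ and $x=L$. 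It thus suffices to prove that this overdetermined fifth-order problem admits only the trivial solution for every $\lambda\in\mathbb{C}$ and every $L>0$ (the $v$-problem being the $w$-problem with $\lambda$ replaced by $-\lambda$). I would then extend $w$ by zero to $\mathbb{R}$ and pass to the Fourier transform $\widehat{w}(\mu)=\int_0^L w(x)e^{-i\mu x}\,dx$, entire by Paley--Wiener. Integrating the ODE by parts and using that $w,w',w''$ vanish at $0$ and $L$ yields
\[
p(\mu)\,\widehat{w}(\mu)=-b\left[(d_2+i\mu d_1)e^{-i\mu L}-(c_2+i\mu c_1)\right],\qquad p(\mu)=i\bigl(b\mu^5+a\mu^3+\mu\bigr)+\lambda,
\]
where $\alpha=(c_1,c_2,d_1,d_2)=(w'''(0),w''''(0),w'''(L),w''''(L))\in\mathbb{C}^4$ collects the only undetermined traces. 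Since $\widehat{w}$ is entire, the numerator $N_\alpha(\mu,L)=(d_2+i\mu d_1)e^{-i\mu L}-(c_2+i\mu c_1)$ must vanish at each of the five roots of $p$ (counted with multiplicity), so that $f_\alpha(\cdot,L)=N_\alpha(\cdot,L)/p$ is entire; this is exactly the abstract framework described in the Introduction, with $q=p$.

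The hard part, and the main obstacle, is to show that for \emph{every} $L>0$ and every $\lambda$ these five interpolation conditions at the roots of $p$ force $\alpha=0$. Here I would argue as in \cite{santos}: exploiting that $g(\mu)=b\mu^5+a\mu^3+\mu$ is odd and analyzing how the roots of $g(\mu)=i\lambda$ are arranged, one shows that a nontrivial $\alpha$ would require these roots to be linked by a M\"obius relation that cannot hold, thereby excluding nontrivial solutions for all $L$. In contrast to \cite{CaPaRo}, this does not produce an explicit critical set; it yields only the M\"obius constraint, which is nonetheless enough to conclude. Once $\alpha=0$, the numerator vanishes identically, so $\widehat{w}\equiv 0$ and $w\equiv 0$; the same argument applied to $v$ gives $v\equiv 0$, whence $\eta=u=0$. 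This contradicts $(\eta,u)\neq 0$ and \eqref{e7}, and therefore $N_T=\{0\}$.
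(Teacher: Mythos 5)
Your proof follows essentially the same route as the paper: a compactness--uniqueness reduction (which the paper outsources to \cite[Theorem 3.7]{CaPaRo}) showing $N_T$ is finite-dimensional and $A$-invariant, hence carries an eigenfunction; then the decoupling $\eta\pm u$, extension by zero, Fourier transform via Paley--Wiener, and the M\"obius-transformation lemmas of \cite{santos} --- exactly the content of the paper's Lemma \ref{lem2} and its surrounding Steps 2--3. The only differences are cosmetic: you spell out the compactness and lower-bound argument that the paper merely cites, and you are slightly more careful than the paper about the sign of $\lambda$ in the equation satisfied by $\eta-u$.
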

\begin{proof}
The proof uses the same arguments as those given in \cite[Theorem 3.7]{CaPaRo}. If $N_T\neq\{0\}$, the map  $(\eta_0,u_0) \in \C N_T \rightarrow A(N_T)\subset \C N_T$
(where $\C N_T$ denote the complexification of $N_T$) has (at least) one eigenvalue. Hence, there exists $\lambda \in \C$ and $\eta_0,u_0 \in  H^5(0,L)\setminus \{ 0 \}$, such that
\begin{equation*}
\begin{split}
\begin{cases}
\lambda\eta_0+ u'_0-au'''_{0}+bu'''''_{0}=0, & \text{in} \,\, (0,L),  \\
\lambda u_0 +\eta'_0-a\eta'''_{0} +b\eta'''''_{0}=0, & \text{in} \,\, (0,L),  \\
\eta_0(0)=\eta_0(L)=\eta'_0(0)=\eta'_0(L)=\eta''_0(0)=\eta''_0(L)=0, \\
u_0(0)=u_0(L)=u'_0(0)=u'_0(L)=u''_0(0)=u''_0(L)=0.
\end{cases}
\end{split}
\end{equation*}
To obtain the contradiction, it remains to prove that a triple $(\lambda,\eta_0,u_0)$ as above does not exist.
\end{proof}
\noindent\textbf{Step 3: {\em M\"obius transformation}}
\vglue 0.2cm
To simplify the notation, henceforth we denote $(\eta_0,u_0):=(\eta,u)$. Moreover, the notation $\{0,L\}$ means that the function is applied to $0$ and $L$, respectively.
\begin{lemma}\label{lem2}
Let $L>0$ and consider the assertion
\begin{equation*}
(\NN):\ \ \exists \lambda \in \C,  \exists (\eta,u) \in (H^2_0(0,L)\cap H^5(0,L))^2 \,\, \text{such that}\,\,
\end{equation*}
\begin{equation*}
\begin{cases}
\lambda\eta+ u'-au'''+bu'''''=0, & \text{in} \,\, (0,L),  \\
\lambda u +\eta'-a\eta''' +b\eta'''''=0, & \text{in} \,\, (0,L),  \\
\eta(x)=\eta'(x)=\eta''(x)=0, & \text{in} \,\, \{0,L\},\\
u(x)=u'(x)=u''(x)=0, & \text{in} \,\, \{0,L\}.
\end{cases}
\end{equation*}
Then, if $(\lambda,\eta,u) \in \C \times  (H^2_0(0,L)\cap H^5(0,L))^2$ is solution of $(\NN)$, then
$$\eta=u=0.$$
\end{lemma}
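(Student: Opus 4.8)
The plan is to diagonalize the coupled system, reduce each resulting scalar equation to a question about quotients of entire functions via the Fourier transform, and then rule out nontrivial boundary data using the Möbius structure hinted at in the introduction.

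First I would set $p=\eta+u$ and $q=\eta-u$. Adding and subtracting the two differential equations in $(\NN)$ gives the decoupled scalar problems $bp'''''-ap'''+p'+\lambda p=0$ and $bq'''''-aq'''+q'-\lambda q=0$ on $(0,L)$, and since each of $\eta,u$ satisfies the six conditions $w(0)=w(L)=w'(0)=w'(L)=w''(0)=w''(L)=0$, so do $p$ and $q$. Thus it suffices to prove that a fifth order eigenfunction $w$ whose value, first and second derivatives all vanish at $0$ and $L$ must vanish identically. Next I would extend $p$ (and likewise $q$) by zero to all of $\R$ and take the Fourier transform $\hat p(\xi)=\int_0^L p(x)e^{-i\xi x}\,dx$, which by Paley--Wiener is entire and of exponential type. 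Because $p,p',p''$ vanish at both endpoints, integration by parts leaves boundary contributions only from $p'''$ and $p^{(4)}$, and one obtains $(\lambda+P(\xi))\hat p(\xi)=N_p(\xi)$, where $P(\xi)=i\xi(1+a\xi^2+b\xi^4)$ is the symbol of $\partial_x-a\partial_x^3+b\partial_x^5$ and
\[
N_p(\xi)=\bigl(\beta_4+i\xi\beta_2\bigr)e^{-i\xi L}-\bigl(\beta_3+i\xi\beta_1\bigr),
\]
the four constants $\beta_1,\dots,\beta_4\in\C$ being built from $p'''(0),p'''(L),p^{(4)}(0),p^{(4)}(L)$.

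Since $\hat p$ is entire while $\lambda+P$ is a degree-five polynomial, $N_p$ must vanish (with multiplicity) at each of the five roots $\xi_1,\dots,\xi_5$ of $b\xi^5+a\xi^3+\xi=i\lambda$; an identical analysis for $q$ produces a second system attached to the roots of $b\xi^5+a\xi^3+\xi=-i\lambda$. The decisive step is to show that these vanishing conditions force $\beta_1=\cdots=\beta_4=0$, whence $N_p\equiv0$, $\hat p\equiv0$ and $p\equiv0$ (and likewise $q\equiv0$), giving $\eta=u=0$. Rewriting $N_p(\xi_j)=0$ as
\[
e^{-i\xi_j L}=\frac{\beta_3+i\xi_j\beta_1}{\beta_4+i\xi_j\beta_2}=:T(\xi_j),
\]
one sees that a nontrivial choice of the $\beta$'s would require the transcendental function $\xi\mapsto e^{-iL\xi}$ to agree with a single Möbius transform $T$ at all five roots simultaneously.

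I would then exploit the relations among these roots, which solve a quintic with no $\xi^4$ or $\xi^2$ term so that $\sum_j\xi_j=0$ and $\sum_{j<k}\xi_j\xi_k=a/b$, together with the prescribed growth and decay of the entire function $\hat p$, to derive a contradiction for every $L>0$ and every $\lambda\in\C$, in the spirit of Santos \emph{et al.} \cite{santos}. This incompatibility between the exponential $e^{-iL\xi}$ and a Möbius map evaluated along the root set is precisely the main obstacle: in contrast with the third order case of \cite{CaPaRo}, no explicit critical set is produced, so the argument cannot single out exceptional lengths and must instead show uniformly in $L$ that the only admissible boundary data are trivial. Carrying out this Möbius-based exclusion is the technical heart of the proof; once it yields $\beta\equiv0$ for both systems, the conclusion $\eta=u=0$ is immediate.
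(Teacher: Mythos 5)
Your reduction is exactly the paper's: the diagonalization $\varphi=\eta\pm u$, the extension by zero, the Fourier transform and Paley--Wiener, and the reformulation of the problem as the requirement that $e^{-iL\xi}$ agree with a fractional linear map at the five roots of a quintic. But the proposal stops precisely where the paper's argument begins to do work, and three ingredients are missing. First, you never show that $\lambda$ is purely imaginary; the paper gets this by multiplying the decoupled equation by $\overline{\varphi}$ and integrating over $(0,L)$ (all boundary terms vanish because $\varphi,\varphi',\varphi''$ vanish at both ends). This is not cosmetic: only for $\lambda=ir$ with $r\in\R$ does the quintic $q(\xi)=b\xi^5+a\xi^3+\xi+r$ have real coefficients, so that its non-real roots come in conjugate pairs, and Claim \ref{roots} (that $q'$ has no real zeros, hence $q$ has exactly one simple real root and two conjugate pairs) is what places the roots in the configuration $\{\xi_1,\xi_2,\bar{\xi}_1,\bar{\xi}_2\}$ demanded by the key lemma. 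Your stated plan to ``derive a contradiction for every $\lambda\in\C$'' would have to handle quintics without conjugate symmetry, to which that lemma does not apply.

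Second, the decisive exclusion is asserted rather than proved. The symmetric-function relations $\sum_j\xi_j=0$ and $\sum_{j<k}\xi_j\xi_k=a/b$, together with ``growth and decay of $\hat p$,'' are not what closes the argument; the paper invokes a specific result of Santos \emph{et al.} (Lemma \ref{mobiuslemma}): for any $L>0$ there is no M\"obius transformation $M$ with $M(\xi)=e^{-iL\xi}$ at four distinct points forming two conjugate pairs. You would need to prove or at least cite this, since it is the entire content of the step you label the ``technical heart.'' Third, you implicitly assume that $\xi\mapsto(\beta_3+i\xi\beta_1)/(\beta_4+i\xi\beta_2)$ is an honest M\"obius transformation. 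When the discriminant $d(\alpha)=\alpha_1\alpha_3-\alpha_2\alpha_4$ vanishes the map degenerates and the exclusion lemma says nothing; the paper disposes of this case with a separate lemma (Lemma \ref{mobiuslemma2}: the zeros of $N_\alpha(\cdot,L)$ then have at most two distinct imaginary parts, whereas the roots of $q$ have at least three). Without these three steps the conclusion that all four boundary traces vanish, and hence $\eta=u=0$, does not follow.
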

\begin{proof}
Firstly, let us consider the following change of variable $\varphi(x)=\eta(x)\pm u(x)$, thus we have the problem in only one equation:
\begin{equation}\label{e8}
\begin{cases}
\lambda\varphi+ \varphi'-a\varphi'''+b\varphi'''''=0, & \text{in} \,\, (0,L),  \\
\varphi(x)=\varphi'(x)=\varphi''(x)=0, & \text{in} \,\, \{0,L\}.
\end{cases}
\end{equation}
Note that, if we multiply the equation in \eqref{e8} by $\overline{\varphi}$ and integrate in $[0,L]$, it is easy to see that $\lambda$ is purely imaginary, i.e., $\lambda=ir$, for $r\in\mathbb{R}$. Now, we extend the function $\varphi$ to $\mathbb{R}$ by setting  $\varphi(x)=0$ for $x\not\in [0,L]$. The extended function satisfies
 \begin{equation*}
 \lambda\varphi+ \varphi'-a\varphi'''+b\varphi'''''=b\varphi''''(0)\delta_0^{'}-b\varphi''''(L)\delta_L^{'}+b\varphi'''(0)\delta_0-b\varphi'''(L)\delta_L,
\end{equation*}
in ${\mathcal S }'(\R )$,  where $\delta_\zeta$ denotes the Dirac measure at $x=\zeta$ and the derivatives $\varphi''''(0)$, $\varphi''''(L)$, $\varphi'''(0)$ and $\varphi'''(L)$ are those of the
function $\varphi$ when restricted to $[0,L]$.  Taking the Fourier transform of each term in the above system and integrating by parts, we obtain
\begin{equation*}
\begin{split}
\lambda \hat \varphi (\xi )  + i\xi \hat \varphi (\xi )  - a(i\xi )^3 \hat \varphi (\xi) +b  (i\xi )^5 \hat \varphi (\xi)  =& b(i\xi)\varphi'''(0)-b(i\xi)\varphi'''(L)e^{-iL\xi}\\&+b\varphi''''(0)-b\varphi''''(L)e^{-iL\xi}.
\end{split}
\end{equation*}
Setting $\lambda = -ir$ and $f_{\alpha}(\xi,L)=i \hat \varphi (\xi )$, from the equation above it follows that
\begin{equation*}
f_{\alpha}(\xi,L)=\frac{N_{\alpha}(\xi,L)}{q(\xi)},
\end{equation*}
with $N_{\alpha}(\cdot,L)$ defined by
\begin{equation}\label{Na}
N_{\alpha}(\xi,L)=\alpha_1i\xi-\alpha_2i\xi e^{-i\xi L}+\alpha_3-\alpha_4e^{-i\xi L}
\end{equation}
and
\begin{equation*}
q(\xi)=b\xi^5+a\xi^3+\xi+r,
\end{equation*}
where
$\alpha_i$, for $i=1,2,3,4$, are the traces of $b\varphi'''$ and $b\varphi''''$.

For each $r\in\mathbb{R}$ and $\alpha\in\mathbb{C}^4\setminus \{0\}$ let $\FF_{\alpha r}$ be the set of $L>0$ values, for which the function $f_{\alpha}(\cdot,L)$ is entire. 
We introduce the following statements, which are equivalent:
\begin{itemize}
\item[A1.] $f_{\alpha}(\cdot,L)$ is entire;
\item[A2.] all zeros, taking the respective multiplicities into account, of the polynomial $q$ are zeros of $N_{\alpha}(\cdot,L)$;
\item[A3.]  the maximal domain of $f_{\alpha}(\cdot,L)$ is $\mathbb{C}$.
\end{itemize}
To the function $f_{\alpha}(\cdot,L)$ to be entire, due to the equivalence between statement A1 and A2, we must have
\begin{equation*}
\frac{\alpha_1i\xi_i+\alpha_3}{\alpha_2i\xi_i+\alpha_4}=e^{-iL\xi_i},
\end{equation*}
where $\xi_i$ denotes the zeros of $q(\xi)$, for $i=1,2,3,4,5$.
Let us define, for $\alpha \in\mathbb{C}^4\setminus \{0\}$, the following discriminant  \begin{equation}\label{discri}
d(\alpha)=\alpha_1\alpha_3-\alpha_2\alpha_4.
\end{equation}
Then, for $\alpha\in\mathbb{C}^4\setminus \{0\}$, such that $d(\alpha)\neq0$ the M\"obius transformations can be introduced by
\begin{equation}\label{mobius}
M(\xi_i)=e^{-iL\xi_i},
\end{equation}
for each zero $\xi_i$ of the polynomial $q(\xi)$.

\vglue 0.2cm
\noindent The next claim analyzes the behavior of the roots of polynomial $q(\cdot)$:

\begin{claim}\label{roots}
The polynomial $q(\cdot)$ has exactly one real root, with multiplicity $1$ and two pairs of complex conjugate roots.
\end{claim}
\begin{proof}[Proof of the Claim \ref{roots}]
Initially, we suppose that $r\neq 0$. Note that the derivative of $q$ is given by
\begin{equation*}
q'(\xi)=5b\xi^4+3a\xi^2+1,
\end{equation*}
and its zeros are $\pm z_1$ and $\pm z_2$, where
\begin{equation*}
z_1=\sqrt{\frac{-3a-\sqrt{9a^2-20b}}{10b}} \quad\text{and}\quad z_2=\sqrt{\frac{-3a+\sqrt{9a^2-20b}}{10b}}.
\end{equation*}
It is easy to see that $z_1$ and $z_2$ belong to $\C \setminus \R$. Hence, the polynomial $q(\cdot)$ does not have critical points, which means that $q(\cdot)$ has exactly one real root. Suppose that $\xi_0 \in \R$ is the root of $q(\cdot)$ with multiplicity $m \leq 5$. Hence,
$$q(\xi_0)= q'(\xi_0)= ... = q^{(m-1)}(\xi_0)=0.$$
Consider the following cases:
\begin{enumerate}
\item[(i)] If $\xi_0$ has multiplicity $5$, it follows that $q(\xi_0)=0$ and $q''''(\xi_0)=120b\xi_0=0$,  implying that $\xi_0=0$ and $r=0$.
\item[(ii)] If $\xi_0$ has multiplicity $4$, it follows that $q'''(\xi_0)=60b\xi^2_0+6a=0$,  implying that $\xi_0 \in i\R$.
\item[(iii)] If $\xi_0$ has multiplicity $3$, it follows that $q(\xi_0)=0$ and $q''(\xi_0)=20b\xi^3_0+6a\xi_0=0$,  implying that $\xi_0=0$ and $r=0$ or $\xi_0 \in i\R$.
\item[(iv)] If $\xi_0$ has multiplicity $4$, it follows that $q'(\xi_0)=5b\xi^4_0+3a\xi_2+1=0$,  implying that $\xi_0 \in \C \setminus \R.$
\end{enumerate}
In any case, we have a contradiction, since $r\neq 0$ and $\xi_0 \in \R$.  Consequently, $q(\cdot)$ has exactly one real root, with multiplicity $1$. This means that this polynomial has two pairs of complex conjugate roots.

\vglue 0.2cm

Second, we suppose that $r=0$. Initially, note that from the derivation of the model (see the Appendix) we have that $4b>a^2$. Then, we obtain that $$ q(\xi)= \xi(b\xi^4+a\xi^2+1),$$ whose roots are $0, \pm   \rho$ and $\pm  k$ where
\begin{equation}
\rho ^2=  -\frac{a}{2b} +i\frac{\sqrt{4b-a^2}}{2b} \quad \text{and} \quad k=-\frac{a}{2b} -i\frac{\sqrt{4b-a^2}}{2b}= \overline{\rho^2}=\overline{\rho}^2.
\end{equation}
Thus, $q(\cdot)$ has two pairs of complex conjugate roots and one real root, proving Claim \ref{roots}.
\end{proof}
\vglue 0.2cm

Besides of the Claim \ref{roots} the following two auxiliary lemmas  are necessary to conclude the proof of the Lemma \ref{lem2}.  Their proofs can be found in \cite[Lemmas 2.1 and 2.2]{santos}, thus we will omit them.

\begin{lemma}\label{mobiuslemma2}
Let non null $\alpha \in   \C^4$ with $d(\alpha) = 0$ and $L > 0$ for $d(\alpha)$ defined in \eqref{discri}. Then, the set of the imaginary parts of the
zeros of $N_{\alpha}(\cdot,L)$ in \eqref{Na} has at most two elements.
\end{lemma}

\begin{lemma}\label{mobiuslemma}
For any $L > 0$, there is no M\"obius transformation $M$, such that
\begin{align*}
M(\xi)=e^{-iL\xi}, \quad \xi\in \{ \xi_1, \xi_2, \bar{\xi}_1, \bar{\xi}_2 \},
\end{align*}
with $\xi_1, \xi_2, \bar{\xi}_1, \bar{\xi}_2$ all distinct in $\C$.
\end{lemma}

Let us finish the proof of Lemma \ref{lem2}. To do this, we need to consider two cases: \begin{itemize}
\item[i.] $d(\alpha)\neq 0$;
\item[ii.] $d(\alpha)=0$,
\end{itemize}
where $d(\alpha)$ was defined in \eqref{discri}.

In fact, if $d(\alpha)\neq 0$, we can defined the M\"obius transformation. Let us assume, by contradiction, that there exists $L>0$  such that the function $f_a(\cdot,L)$ is entire. Then, all roots of the polynomial $q(\cdot)$ must satisfy \eqref{mobius}, i.e., there exists a M\"obius transformation that takes each root $\xi_0$ of $q(\cdot)$ into $e^{-iL\xi_0}$.
However, this contradicts Lemma \ref{mobiuslemma} and proves that if $(\NN)$ holds then $\FF_{\alpha r}=\emptyset$ for all $r\in\R$. On the other hand, suppose that $d(\alpha)=0$ and note that by using the claim \ref{roots}, we can conclude that the set of the imaginary parts of the polynomial $q(\cdot)$ has at least three elements, thus it follows from Lemma \ref{mobiuslemma2} that $\FF_{\alpha r}=\emptyset$ for all $r\in\R$. Note that in both cases, we have that $\FF_{\alpha r}=\emptyset$, which implies that $(\NN)$ only has the trivial solution for any $L>0$, and the proof of Lemma \ref{lem2} is archived.
\end{proof}

To close this section we derive an exponential stability result in each space $X_s$, for $s\in[0,5]$. To do this, for $s\in [0,5]$, let $X_s$ denote the collection of all the functions $$(\eta,u)\in[H^s_0 (0,L)]^2:=\{(\eta,u)\in [H^s(0,L)]^2:(\eta,u)^{(j)}(0)=0, (\eta,u)^{(j)}(L)=0\},$$ for $j=0,1,...,[s] $\footnote{For any real number $s$, $[s] $ stands for its  integer part.}, endowed with the Hilbertian norm $$\|(\eta,u)\|^2_{X_s}=\|\eta\|^2_{H^s(0,L)}+\|u\|^2_{H^s(0,L)}.$$ Using Theorem \ref{main_int} and some interpolation argument, we derive 
the following result.
\begin{corollary}
Let $\alpha_i$, $i=1,2$ be as in Theorem \ref{main_int}. Then, for any $s\in [0, 5]$, there exists a constant $C_s>0$, such that, for any $(\eta_0,u_0)\in X_s$, the solution $(\eta(t),u(t))$ of \eqref{homo1} belongs to $C(\mathbb{R}^+; X_s)$ and fulfills
\begin{equation}\label{exponential}
\|(\eta(t),u(t))\|_{X_s}\leq C_0e^{-\mu_0 t}\|(\eta_0,u_0)\|_{X_s}, \quad \forall t \geq 0.
\end{equation}
\end{corollary}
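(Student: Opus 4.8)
The plan is to treat the two endpoints $s=0$ and $s=5$ directly and then recover the whole range $s\in(0,5)$ by complex interpolation, which is precisely what is hidden behind the phrase ``some interpolation argument''. The endpoint $s=0$ is nothing but Theorem~\ref{main_int}, so the only genuinely new endpoint is $s=5$, where $X_5=D(A)$ is equipped with the graph norm $\|v\|_{X_5}=\|v\|_{X_0}+\|Av\|_{X_0}$ (the norm used in Corollary~\ref{coro2.2} for the cognate operator).

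For the endpoint $s=5$ I would exploit the commutation of the semigroup with its generator. By Proposition~\ref{semigroup}, $A$ generates $(S(t))_{t\ge0}$, so for $(\eta_0,u_0)\in D(A)$ one has $S(t)(\eta_0,u_0)\in D(A)$ and $AS(t)(\eta_0,u_0)=S(t)A(\eta_0,u_0)$ for every $t\ge0$. Consequently,
\begin{equation*}
\|S(t)(\eta_0,u_0)\|_{X_5}=\|S(t)(\eta_0,u_0)\|_{X_0}+\|S(t)A(\eta_0,u_0)\|_{X_0}.
\end{equation*}
Applying Theorem~\ref{main_int} once to the datum $(\eta_0,u_0)\in X_0$ and once to $A(\eta_0,u_0)\in X_0$ bounds each summand by $C_0e^{-\mu_0 t}$ times the respective $X_0$-norm, whence
\begin{equation*}
\|S(t)(\eta_0,u_0)\|_{X_5}\le C_0e^{-\mu_0 t}\bigl(\|(\eta_0,u_0)\|_{X_0}+\|A(\eta_0,u_0)\|_{X_0}\bigr)=C_0e^{-\mu_0 t}\|(\eta_0,u_0)\|_{X_5},
\end{equation*}
so that $(S(t))$ decays at the \emph{same} rate $\mu_0$ on $X_0$ and on $X_5$.

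With both endpoints in hand, I would invoke the exactness of exponent $\theta$ of the complex interpolation functor: since $S(t)\in\LL(X_0)\cap\LL(X_5)$, for each $\theta\in(0,1)$ it belongs to $\LL([X_0,X_5]_{[\theta]})$ with
\begin{equation*}
\|S(t)\|_{\LL([X_0,X_5]_{[\theta]})}\le\|S(t)\|_{\LL(X_0)}^{1-\theta}\,\|S(t)\|_{\LL(X_5)}^{\theta}\le C_0e^{-\mu_0 t}.
\end{equation*}
Taking $\theta=s/5$ and recalling $X_s=X_{5\theta}=[X_0,X_5]_{[\theta]}$ yields \eqref{exponential} for every $s\in(0,5)$ with the very same constants $C_0,\mu_0$. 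The assertion $(\eta,u)\in C(\R^+;X_s)$ then follows because $(S(t))$ is a $C_0$-semigroup on each endpoint space and $X_5$ is dense in the interpolation space $X_s$, so a routine $\varepsilon/3$ argument combined with the uniform bound $\|S(t)\|_{\LL(X_s)}\le C_0$ transfers strong continuity to $X_s$.

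The step I expect to require the most care is not the decay estimate but the identification of the concretely defined space $X_s=[H^s_0(0,L)]^2$ in the statement with the abstract interpolation space $[X_0,X_5]_{[s/5]}$ on which the argument operates. This rests on Lions--Magenes trace theory, by which a boundary condition of order $k$ built into $D(A)$ survives in $[X_0,X_5]_{[s/5]}$ only when $s>k+\tfrac12$; one must in particular track how the $H^2_0$ constraints and the \emph{coupled} second-order conditions $u_{xx}+\alpha_1\eta_{xx}=0$ at $0$ and $u_{xx}-\alpha_2\eta_{xx}=0$ at $L$ (which single out $X_5$ inside $[H^5(0,L)\cap H^2_0(0,L)]^2$) are propagated. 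For low regularity $s<\tfrac12$ no condition is active and both descriptions reduce to $[H^s(0,L)]^2$; as $s$ grows one recovers the vanishing traces of the concrete definition, but the coupled conditions force caution near the exceptional half-integer exponents, where $X_s$ should be read as the interpolation space. It is this bookkeeping that carries the analytical weight of the proof.
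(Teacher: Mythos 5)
Your proposal is correct and follows essentially the same route as the paper: the endpoint $s=5$ is handled by propagating the $X_0$-decay of Theorem \ref{main_int} to $AU_0$ via the commutation $AS(t)U_0=S(t)AU_0$ (the paper phrases this as $V(t)=AU(t)$ solving $V_t=AV$ with $V(0)=AU_0$) together with the equivalence of the graph norm with the $X_5$-norm, and the intermediate range is then obtained by complex interpolation using $X_s=[X_0,X_5]_{s/5}$. Your additional remarks on the exactness of the interpolation functor and on identifying the concrete spaces $[H^s_0(0,L)]^2$ with the abstract interpolation scale merely make explicit what the paper leaves as ``a standard interpolation argument.''
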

\begin{proof}
\eqref{exponential} was already been established for s = 0 in Theorem \ref{main_int}. Pick any $U_0=(\eta_0,u_0)\in X_5= D(A)$ and write $U(t)=(\eta(t),u(t))=S(t)U_0$. Let $V(t)=U_t(t) =AU(t)$. Then $V$ is the mild solution of the system
\begin{equation}\label{b1a}
\begin{split}
\begin{cases}
V_t=AV\\
V(0)=AU_0\in X_0,
\end{cases}
\end{split}
\end{equation}
hence, by using Theorem \ref{main_int}, estimate $\|V(t)\|_{X_0}\leq C_0e^{-\mu_0 t}\|V_0\|_{X_0}$, holds. Since $V(t) = AU(t)$, $V_0 = AU_0$, and the norms $\|U\|_{X_0}+ \|AU\|_{X_0}$  and $\|U\|_{X_5}$ are equivalent in $X_5$, we conclude that, for some constant $C_5 > 0$, we have that  $$\|U(t)\|_{X_5}\leq C_5e^{-\mu_0 t}\|U_0\|_{X_5}.$$  This proves \eqref{exponential} for $s = 5$. The fact that \eqref{exponential} is still valid for $0 < s < 5$ follows from a standard interpolation argument, since $X_s = [X_0, X_5]_{s/5}$.
\end{proof}

\section{Further comments and open problems}\label{Sec4}

In this section considerations will be done regarding the fifth order Boussinesq system \eqref{int1}-\eqref{int2}. It is important to note that the classical energy estimate does not provide any global (in time) a priori bounds for the solutions of the corresponding nonlinear model. Consequently, it does not lead to the existence of a global (in time) solution in the energy space. Due to the structure of the nonlinear terms, the same lack of a priori bounds also occurs when higher order Sobolev norms are considered (e. g. $H^s-$norm).  Because to this strict requirement, we cannot proceed as in \cite{CaPaRo,pazoto2008} and have only succeeded in deriving uniform decay
results for the linear system. However, for the full system, we can find solutions - in a certain sense - globally in time in $\overline{X}_s$.

\vspace{0.4cm}
\noindent$\bullet$\textit{ \textbf{Global well-posedness in time}}
\vspace{0.2cm}

Theorem \ref{main_int1} and Proposition \ref{extension} give us a  positive answer to the well-posedness problem. However, the following questions are still open:

\begin{question1}
Is the nonlinear system \eqref{int1}-\eqref{int2}, global well-posedness in time? If yes, should we expect some restriction on the initial data?
\end{question1}


\vspace{0.3cm}
\noindent$\bullet$\textit{ \textbf{One feedback on the boundary condition}}
\vspace{0.2cm}

If we consider in \eqref{int1}-\eqref{int2} with only one damping mechanism, that is, with $\alpha_1$ or $\alpha_2$ vanishing, we still have $E(t)$, defined by \eqref{energy}, decreasing along the trajectories of the linearized system associated to the model. Thus, the following question can be formulated:

\begin{question3}
Is still valid, with only one damping mechanism, the exponential stability for the linearized system associated to \eqref{int1}-\eqref{int2}?
\end{question3}

\vspace{0.4cm}
\noindent$\bullet$\textit{ \textbf{Exponential stability for the full system}}
\vspace{0.2cm}

Due the lack of the classical energy estimate for the nonlinear model we are not able to prove, by using, e.g., \cite{CaPaRo,pazoto2008}, the exponential stability for the full model \eqref{int1}-\eqref{int2}. Then, one natural question remains open:

\begin{question5}
Does the energy associated to the nonlinear system \eqref{int1}-\eqref{int2}, with one or two damping mechanism, converges to zero, as $t\rightarrow \infty$, for initial data on the energy space  $X_0$?
\end{question5}
\section{Appendix}\label{appendix}
The following fifth-order Boussinesq system
\begin{equation}
\label{b1''}
\begin{cases}
\eta_t + u_x-au_{xxx}+ a_1 (\eta u)_x +a_2(\eta u_{xx})_x + b u_{xxxxx}= 0,\\
u_t +\eta_x -a\eta_{xxx} +a_1uu_x+a_3(\eta\eta_{xx})_{x}+  a_4u_xu_{xx}    +b\eta_{xxxxx}=0, \\
\end{cases}
\end{equation}
with $a>0$, $b>0$, $a\neq b$, $a_1>0$, $a_2<0$, $a_3>0$ and $a_4>0$, 
can be derived from \eqref{fifthb} with a carefully choice of the parameters $\theta$, $\beta$ and $\tau$.

Indeed, taking $\tau= \frac{2}{3}-\theta^2,$ we have that $$\frac16\beta(3\theta^2-1)=\beta \left[ \frac12 (1-\theta^2) -\tau \right],$$ and $a=\frac16\beta(3\theta^2-1)$. On the other hand, taking $\theta^2=\frac12-\frac{1}{2\sqrt{5}}\approx 0,276393<\frac13$ and noting that $\beta >0$, it follows that
\begin{equation}\label{new3}
5\left(\theta^2-\frac15\right)^2=(\theta^2-1)(3-11\theta^2)
\end{equation}
and
\begin{equation*}
a=\frac12\beta\left(\theta^2-\frac13\right)<0.
\end{equation*}
Note that \eqref{new3} is equivalent to
\begin{align*}
5\left(\theta^2-\frac15\right)^2 = (\theta^2-1)\left( \frac93-11\theta^2\right)&= (\theta^2-1)\left( \theta^2-5+12\tau\right) \\
&= (\theta^2-5)(\theta^2-1)+12(\theta^2-1)\tau.
\end{align*}
Thus,
\begin{align*}
&\frac{5}{24}\left(\theta^2-\frac15\right)^2= \frac{1}{24}(\theta^2-5)(\theta^2-1)+\frac12(\theta^2-1)\tau  \\
&\Leftrightarrow \quad \frac{1}{120}\left(25\theta^4-10\theta^2+1\right)= \frac{1}{24}(\theta^2-6\theta^2+5)+\frac12(\theta^2-1)\tau,
\end{align*}
and $b$ can define as $$b=\frac{\beta^2}{120}\left(25\theta^4-10\theta^2+1\right)= \beta^2\left[\frac{1}{24}(\theta^2-6\theta^2+5)+\frac12(\theta^2-1)\tau \right]>0.$$
Finally, with the choice of
\begin{multline*}
a_1=\alpha >0, \quad a_2=\frac12\alpha\beta(\theta^2-1)<0,\quad a_3=\alpha\beta>0\quad \text{and}\quad a_4=\alpha\beta(2-\theta^2)>0,
\end{multline*}
we obtain \eqref{b1''}.

\subsection*{Acknowledgments}
R. A. Capistrano--Filho was partially supported by CNPq (Brazil) by the grants 306475/2017-0 and Propesq (UFPE) by Edital  ``Qualis A''. A. F. Pazoto was partially supported by CNPq (Brazil). This work was carried out during two visits of the first author to the Federal University of Rio de Janeiro and one visit of the second author to the Federal University of Pernambuco. The authors would like to thank both Universities for its hospitality.

\end{document}